\newcommand{\R}{\mathbb{R}}
\newcommand{\vertiii}[1]{{\left\vert\kern-0.25ex\left\vert\kern-0.25ex\left\vert #1
    \right\vert\kern-0.25ex\right\vert\kern-0.25ex\right\vert}}
\title{Kermack-McKendrick type models for epidemics with nonlocal aggregation terms}
\author{M. Di Francesco and F. Ghaderi Zefreh}
\date{}
\newtheorem{Theorem}{Theorem}[section]
\newtheorem{Definition*}{Definition}[section]
\newtheorem{Lemma}{Lemma}[section]
\newtheorem{Proposition}{Proposition}[section]
\newtheorem{Remark}{Remark}[section]
\begin{document}

\address{Marco Di Francesco - DISIM - Department of Information Engineering, Computer Science and Mathematics, University of L'Aquila, Via Vetoio 1 (Coppito)
67100 L'Aquila (AQ) - Italy}
\email{marco.difrancesco@univaq.it}

\address{Fatemeh Ghaderi Zefreh - DISIM - Department of Information Engineering, Computer Science and Mathematics, University of L'Aquila, Via Vetoio 1 (Coppito)
67100 L'Aquila (AQ) - Italy}
\email{fatemeh.ghaderizefreh@graduate.univaq.it}

\keywords{SIR-type models; spatial heterogeneity; nonlocal aggregation; existence and uniqueness; stationary states.}

\subjclass{Primary: 35F55; 45K05; 35A01; 92D30. Secondary: 35B40; 35K40.}

\begin{abstract}
    We propose an approach to model spatial heterogeneity in SIR-type models for the spread of epidemics via \emph{nonlocal aggregation terms}. More precisely, we first consider an SIR model with spatial movements driven by nonlocal aggregation terms, in which the inter-compartment and intra-compartment interaction terms are distinct, and modelled through smooth interaction kernels. For the Cauchy problem of said model we provide a full well-posedness theory on $\R^2$ for $L^1\cap L^\infty \cap H^1$ initial conditions. The existence part is achieved by considering an approximated model with artificial linear diffusion, for which existence and uniqueness is proven via Duhamel's principle and Banach fixed point, and by providing suitable uniform estimates on the approximated solution in order to pass to the limit via classical compactness techniques. To prove uniqueness, we use classical $L^2$-stability which relies on the $H^1$-regularity of the solution. In the second part of the paper we provide a brief, general discussion on the steady states for these type of models, and display a specific example of non-trivial steady states for an SIS model with aggregations (driven by a single repulsive-attractive potential), the existence of which is determined by a threshold condition for a suitable "space-dependent" basic reproduction rate. We complement the analysis with numerical simulations on the SIS model.
\end{abstract}

\maketitle

\section{Introduction}
The mathematical modelling of the diffusion of infectious diseases is a classical topic in applied mathematics. The pioneering work of Kermack and McKendrick \cite{kermack} is usually referred to as the first important contribution to the subject. The key idea behind said classical model is that of dividing the human/animal population subject to a given infectious disease into several compartments such as \emph{susceptible} (the individuals who can catch the disease), \emph{infectious} (the individuals who can transmit the disease), \emph{recovered} (the individuals who have caught the disease and have recovered from it), and various other ones depending on the particular problem under study. Perhaps the most classical and most well-known version of this model is the so-called SIR, in which the three above mentioned compartments are the only ones considered, and which looks like
\begin{equation}\label{eq:SIR}
    \begin{cases}
        \dot{S}(t)=-\beta S(t)I(t),& \\
        \dot{I}(t)=\beta S(t)I(t) -\alpha I(t), & \\
        \dot{R}(t)=\alpha I(t).
    \end{cases}
\end{equation}
The quantity $S(t)$ (respectively $I(t)$ and $R(t)$) represents the total population of susceptible (respectively infectious and recovered) individuals. The quantities $S(t)$, $I(t)$, and $R(t)$ are functions of time $t\in [0,+\infty)$. They are assigned each one an initial condition at time $t=0$. The coefficients $\beta>0$ and $\alpha>0$ are usually called \emph{transmission rate} and \emph{recovery rate} respectively. In \eqref{eq:SIR}, only two transitions occur between two distinct compartments, namely from susceptible to infectious and from infectious to recovered. The total population $S(t)+I(t)+R(t)$ in \eqref{eq:SIR} is constant in time. More complex models include additional transitions (such as from $R$ to $S$ when there is loss of immunity) and additional compartments (such as quarantined, vaccinated, exposed, deceased, and so on). Providing a thorough list of contributions to the mathematical theory of SIR models and similar ones is quite challenging and goes beyond the aims of this paper. We refer to the book \cite{diekmann}, a classical one in this context, see also the review papers \cite{sattenspiel_review} and \cite{anita_capasso} and the more recent book \cite{martcheva}.

The set of approaches which start from models of the form \eqref{eq:SIR} and consider various variations depending on the problem under study has become a mathematical area of its own, which is usually referred to as "mathematical modelling in epidemiology", or simply \emph{mathematical epidemiology}. Apart from the aforementioned more complex models with additional transitions and compartments, other recurrent examples of variations from SIR-type models are the use of delay-in-time in the transition coefficients (see e. g. \cite{beretta}) and the introduction of \emph{structural} independent variables which have an impact on the involved parameters, the typical example being that of the so-called \emph{age structured} SIR type models. We mention in this framework the papers
\cite{iannelli_et_al,anita_iannelli,pugliese}. Other examples are multiple susceptible species (for example in animal-human transmissions (see \cite[Chapter 4]{martcheva}), and multiple strains (\cite[Chapter 8]{martcheva}).

\medskip
Starting from the pioneering \cite{thieme} (see also the more recent \cite{rass}), some researchers in this field started to include \emph{spatial heterogeneity} in the modelling of epidemics. Spatial heterogeneity may be seen in two main ways: either by assuming the movements of various "patches" (see e.g. \cite{lloyd_may,kuniya_et_al}), or by analysing the \emph{spatial spread} of "fronts" in the propagation of the epidemics. In this paper we deal with the latter approach. We introduce a two dimensional space variable $x\in \R^2$, which may vary in a given subset of $\R^2$. Having in mind the reference model \eqref{eq:SIR} as a starting point, the three population compartments $S$, $I$, and $R$ are now depending on space and time, that is $S=S(x,t)$, $I=I(x,t)$, and $R=R(x,t)$. Perhaps the simplest way to introduce space dependence in the model is by introducing space-dependent transition coefficients, see
\cite{takacs_horvath_farago,yang_et_al}.

Ideally, the introduction of spatial heterogeneity in the model should imply the formation of heterogeneous patterns (or clusters) or the formation of moving fronts. In a more general context, this is the typical behavior one may observe in "reaction-diffusion systems" as first observed in the pioneering work of A. Turing \cite{turing}. Based on that intuition, several authors have considered SIR models with diffusion on each species with different diffusion coefficients.
We mention the classical reference \cite{murray}. See also the more recent \cite{Li_2008,zhang_et_al,mohan_kumari}. In these models, linear diffusion terms are added to each equation in \eqref{eq:SIR} to model the spread of individuals. For example, the first equation in \eqref{eq:SIR} has an additional $d\Delta S$, with $d>0$ a diffusion constant (which is possibly distinct for each compartment) and with $\Delta$ being the two-dimensional Laplace operator.

Clearly, the recent COVID-19 pandemic has contributed to increase the attention on this subject, see \cite{zhi_et_al,mingione_et_al}. Reaction-diffusion versions of SIR type models have been also considered with age-structured part, see \cite{langlais_busenberg,kim}. Diffusion models with free boundaries have been studied in \cite{kim_lin_zhang}. Localised breaks and spike solutions were considered in \cite{gai_iron_kolokolnikov}. Other recent relevant results on SIR type models with diffusion are \cite{viguerie_et_al,grave_et_al}. The impact of road connections in the spread of the virus are considered in \cite{berestycki_et_al}. Applications to regional control are studied in \cite{capasso_anita}.
We refer to the review paper \cite{davydovych_et_al} for further references.

\medskip
An increasing attention has been recently devoted to describing the spread of individuals in epidemics modelling via \emph{nonlocal movements}. We mention in particular the research strand of \cite{kuniya_wang,yang_wang,bentout_et_al,li,cao_et_al}, all of which refer to epidemics modelling (except \cite{cao_et_al} devoted to Lotka-Volterra models) in which the Laplacian is replaced by its nonlocal verison $d(J\ast S - S)$, with $J\geq 0$ being an integrable kernel with unit mass on $\R^2$, and the symbol $\ast$ stands for convolution with respect to the spatial variables. The main idea behind this approach is that the use of the Laplacian in the diffusion term only takes into account of "local" interactions, that is, each individual only interacts with those at the same position, or with its nearest neighbours if we interpret linear diffusion as the macroscopic limit of microscopic random interactions. Describing space heterogeneity via nonlocal dispersal allows instead to consider interactions among individuals located at distances of macroscopic scale. We stress that both approaches typically consider \emph{diagonal} diffusion, that is, spatial interactions are limited to individuals of the same species.

In this paper, we propose an alternative approach to model nonlocality in spatial heterogeneity, based on so-called  \emph{nonlocal aggregation} terms in the model. We explain the basics of this approach to the non-expert reader. At a microscopic level, assuming for simplicity that we are dealing with $N$ individuals with time-varying positions $x_1(t),\ldots,x_N(t)\in \R^d$, with all individuals belonging to the same compartment (no transitions to other compartments), the velocity of each individual in a nonlocal aggregation terms is typically  expressed as a discrete convolution
\begin{equation}\label{eq:nonlocal_particle}
    \dot{x}_i(t)=-\frac{1}{N}\sum_{j=1}^N \nabla W (x_i(t)-x_j(t))\,,\qquad i=1,\ldots,N\,,
\end{equation}
where $W$ is a given \emph{interaction kernel}, or \emph{aggregation kernel}, which is typically assumed to be radial, $W(x)=w(|x|)$ for some one-dimensional profile $w:[0,+\infty)\rightarrow \R$. In \eqref{eq:nonlocal_particle}, each particle $x_i$ potentially interacts with any other particle $x_j$ with a "weight" depending on the relative position $x_i-x_j$ (the distance $|x_i-x_j|$ in case $W$ is radial). The profile $w$ models attractive interactions if $w'>0$, repulsive ones if $w'<0$. The use of such nonlocal interactions was introduced in the context of animal dispersal and swarming models, see \cite{mogilner,topaz}. It is also used in the modelling of crowd-dynamics, see e.g. \cite{colombo}, and in ecology \cite{cantrell}. Moreover, very popular models in cellular biology use a similar approach, see the well-known \cite{keller}.

The macroscopic PDE version of \eqref{eq:nonlocal_particle} is the nonlocal continuity equation
\begin{equation}\label{eq:nonlocal_intro}
    \partial_t \rho(x,t) - \mathrm{div}(\rho \nabla W\ast \rho)=0,
\end{equation}
in which the unknown function $\rho=\rho(x,t)$ represents the space-time depending density of individuals. In \eqref{eq:nonlocal_intro}, the discrete convolution of \eqref{eq:nonlocal_particle} is replaced by a continuous one. Depending on the applied context in mind, an additional, linear or nonlinear diffusion term $\Delta a(\rho)$ may be added on the right-hand side of \eqref{eq:nonlocal_intro}, in which case the equation \eqref{eq:nonlocal_intro} is called \emph{aggregation-diffusion} equation. Equations of the form \eqref{eq:nonlocal_intro} are very flexible tools to model complex space-dependent dynamics. Despite being relatively simple in their formulation, their long-time behavior may result into a variety of asymptotic states, ranging from homogenisation (convergence to a constant state) to the formation of a nontrivial pattern or to the emergence of blow-up in finite or infinite time. The latter two are often interpreted as the mathematical counterpart of \emph{self-organisation} phenomena in case the model arises from animal population dynamics. The study of the steady states of \eqref{eq:nonlocal_intro} is also of interest in flocking type models, see \cite{motsch_tadmor,dorsogna_et_al}.

The mathematical theory of aggregation diffusion equations has attracted great attention in the past twenty years. For the sake of brevity we mention here some of the main references for the existence theory. A classical functional analytical approach was used in \cite{bertozzi_carrillo_laurent, bertozzi_laurent}. The use of Wasserstein gradient flows developed in \cite{AGS} inspired the results in \cite{CDFLS,CMV}. Often, the interaction kernel $W$ features some singularities, for example in biological aggregation modelling and in population dynamics. Those singularities often result in the formation of concentrations in finite times, see the seminal result in \cite{JL}. This complicates the analysis of these models and motivates their study in the framework of measure solutions, see \cite{carrillo_choi_hauray, CDFLS}.

Applications e.g. in crowd movements \cite{degond}, multi-species populations \cite{CM}, and opinion formation \cite{during} motivated the study of systems of equations of the form \eqref{eq:nonlocal_intro} with more than one population species. In case of two species, with densities $\rho(x,t)$ and $\eta(x,t)$ respectively, a pretty general model is
\begin{equation}\label{eq:nonlocal_multi}
    \begin{cases}
    \partial_t \rho =\mathrm{div}(\rho\nabla (a(\rho,\eta)+W_{11}\ast \rho + W_{12}\ast \eta)), & \\
    \partial_t \eta =\mathrm{div}(\eta\nabla (b(\rho,\eta)+W_{21}\ast \rho + W_{22}\ast \eta)), &
    \end{cases}
\end{equation}
where we have included self-diffusion and cross-diffusion effects in $a$ and $b$. For the mathematical theory of these type of models we refer e.g. to \cite{difrafag,DiFEF}. We observe that in \eqref{eq:nonlocal_multi} there are four distinct interaction kernels $W_{ij}$, with $i,j\in\{1,2\}$. More precisely, the term $\nabla W_{ij}(x-y)$ models the extent in which an individual of the $i$-species located at $x$ is affected by an individual of the $j$-species located at $y$. The kernels $W_{11}$ and $W_{12}$ are called \emph{self-interaction} potentials, whereas $W_{12}$ and $W_{21}$ are called \emph{cross-interaction} potentials. Diversifying the nonlocal interaction terms depending on the species under consideration is reasonable in many applications. As a key example, one can consider predator-prey interactions, see \cite{difrafag2}, with $\rho$ and $\eta$ in \eqref{eq:nonlocal_multi} representing predator and prey species respectively, with $W_{12}$ attractive and $W_{21}$ repulsive, while $W_{11}$ and $W_{22}$ may be either zero or yielding the formation of a nontrivial aggregating pattern (which is the case, for example, if the self-interaction potentials are short-range repulsive and long-range attractive, see for example \cite{balague}). The degenerate case $a(\rho,\eta)=b(\rho,\eta)=(\rho+\eta)^2$ is particularly interesting (and challenging from the point of view of the existence of transient solutions) as it gives rise to sorting phenomena \cite{sorting} and to a variety of separated and non-separated steady states \cite{zoology}.

\medskip
The goal of this paper is to introduce a new approach in the study of Kermack-McKenrdick type models in which the spatial heterogeneity is expressed through nonlocal aggregation terms in the spirit of \eqref{eq:nonlocal_multi}. As a paradigmatic example, in the main part of our paper, we shall devote our attention to the SIR model with nonlocal aggregation terms
\begin{equation}\label{mainsystem_intro}
    \left\{\begin{array}{l}
\partial_t S=\operatorname{div}\left(S \nabla V_{S}[S, I, R]\right)-\beta S I, \\
\partial_t I=\operatorname{div}\left(I \nabla V_{I}[S, I, R]\right)+\beta S I-\alpha I, \\
\partial_t R=\operatorname{div}\left(R \nabla V_{R}[S, I, R]\right)+\alpha I,
\end{array}\right.
\end{equation}
where the unknown functions $S$, $I$, and $R$ depend on $x \in \mathbb{R}^{2}$ and $t>0$. Here, the operators $V_{S}$, $V_{I}$, and $V_{R}$ model nonlocal aggregation and are defined as
\begin{equation}\label{eq:nonlocal_terms}
\begin{aligned}
& V_{S}[S, I, R]=W_{S S}*S + W_{S I}*I + W_{S R}*R \\
& V_{I}[S, I, R]=W_{I S}*S + W_{I I}*I + W_{I R}*R \\
& V_{R}[S, I, R]=W_{R S}*S + W_{R I}*I + W_{R R}*R
\end{aligned}
\end{equation}
where the convolutions are taken only with respect to the space variable $x$. The constants $\alpha,\beta>0$ are assigned and justified as in \eqref{eq:SIR}.
The interaction kernels $W_{\xi\eta}$ with $\xi,\eta\in \{S,I.R\}$ are assigned. They model the way the individuals of each compartment adapt their velocity with respect to the distribution of the individuals of the three compartments. Such freedom in the choice of the interaction potentials may be useful in many ways. As an example, assuming that all potentials feature a short-range repulsion and a long-range attraction (in such a way to possibly yield formation of nontrivial stationary patterns in the limit), one may consider a wider repulsive range in the potential $W_{SI}$ compared to the one of $W_{SS}$, due to the fact that susceptible individuals are keener to interact with other susceptible individuals rather than with infectious ones. In this paper we assume for simplicity that all kernels satisfy the regularity assumption
\begin{align}
    & D^\gamma W_{\xi\eta}\in L^1(\R^2)\cap L^\infty(\R^2)\nonumber\\
    & \hbox{for all multi-indexes $\gamma$ with $1\leq |\gamma|\leq 2$ and for all $\xi,\eta\in \{S,I,R\}$}.\label{eq:ass_reg_W}
\end{align}
Moreover, we assume
\begin{equation}\label{eq:ass_sym_W}
    W_{\xi\eta}(-x)=W_{\xi\eta}(x)\qquad \hbox{for all $x\in \R^2$ and for all $\xi,\eta\in \{S,I,R\}$}\,.
\end{equation}
The assumption \eqref{eq:ass_reg_W} is merely technical in that it eases the mathematical theory of \eqref{mainsystem_intro}. More general cases might be considered in future studies. Assumption  \eqref{eq:ass_sym_W} relaxes the radial condition $W_{\xi\eta}(x)=w_{\xi\eta}(|x|)$ for some one-dimensional profile $w_{\xi\eta}$, which is more often used in this context. The problem we study is a plain Cauchy problem with $x\in \R^2$, with no boundary condition, for which we provide a full well-posedness theory.

The second purpose of this work is to test our nonlocal transport approach on a classical model featuring multiple steady states, the stability of which is determined by the size of a so-called \emph{basic reproduction number} $\mathcal{R}_0$. To this aim, in the last two sections we study the one-dimensional nonlocal SIS model
\begin{equation}\label{eq:SIS_space_intro}
    \begin{dcases}
    \partial_t S = \partial_x (S W'*(S+I)) -\beta SI +\alpha I & \\
      \partial_t I = \partial_x (I W'*(S+I)) +\beta SI -\alpha I &
    \end{dcases}
\end{equation}
with the special choice for the interaction kernel
\begin{equation}\label{eq:W_special}
    W(x)= x^2-\gamma|x|\,,\qquad \gamma >0.
\end{equation}
System \eqref{eq:SIS_space_intro} is a two-compartment model in which the transition from infectious to recovered is replaced by a transition from infectious to susceptible, to express loss of immunity upon recovery. The choice of $W$ yields the emergence of a nontrivial pattern for the total population. We provide existence of space-inhomogeneous disease-free and endemic equilibrium for \eqref{eq:SIS_space_intro} under suitable assumptions. Moreover, we provide numerical simulations suggesting that these steady states are stable.
% In the final part of the paper we provide a brief discussion about the existence of steady states for systems of the form \eqref{mainsystem_intro}, and provide a specific example of non-trivial steady states for an SIS system (basically a two-compartment model in which the transition from infectious to recovered is replaced by a transition from infectious to susceptible, to express loss of immunity upon recovery) with nonlocal aggregation terms.

\medskip
Before explaining our results in detail, we provide here a further digression on the existing literature with the twofold goal of motivating the use of our model \eqref{mainsystem_intro} and of providing possible further directions for our study.
\begin{enumerate}
    \item Our approach in using nonlocal terms to model spatial heterogeneity has the potential to be proposed at the level of \emph{moving agents}, in the spirit of the ODE system \eqref{eq:nonlocal_particle}. This remark is possibly of interest with respect to the direction of the research in
\cite{sattenspiel_et_al}, which aims at simulating the spread of an epidemics via an agent-based approach.
\item One of our goals in proposing this approach is to analyse the impact of nonlocal transport in the qualitative properties of SIR-type models, including the long time behavior. As already observed in \cite{colombo_garavello_marcellini_rossi} (in which spatial heterogeneity is modelled via linear transport terms), spatial movements may impact very highly on the behavior of the solution. This opens possible new directions in the comparison of our model with more classical models (space homogeneous or with linear diffusion), not to mention the possibility of calibrating the model with available datasets, of making the model more realistic adding further compartments, and of using additional containment terms in the spirit of control theory. It is worth mentioning at this stage that these tasks have been tackled in \cite{albi_et_al} in the context of Covid-19 modelling, with a slightly different approach based on kinetic theory, in which transport has been simplified to include bidirectional terms. For the kinetic approach to Covid-19 modelling see also
\cite{bertaglia_pareschi_toscani} and
\cite{dimarco_perthame_toscani_zanella}.
\item We mention here the paper \cite{nature}, which seems perhaps as close as it gets to our approach. In said paper, a nonlocal transport version of SIR is formulated via the so called dynamic density functional theory, a modelling technique which is borrowed from soft matter theoretical physics. The simulations in \cite{nature} provide evidence of phase transition phenomena and the formation of colonies. We observe that our model \eqref{mainsystem_intro} contains cross-interaction terms which are not included in \cite{nature}.
\item The literature of nonlocal interaction equations and systems provides a fairly rich catalogue of possible behaviors for large times, even if we confine our analysis to smooth interaction potentials. The case of attractive potentials produces the formation of delta steady states in infinite times (see \cite{BuDiF}). The case of potentials which are repulsive for short ranges and attractive for large ranges implies the formation of multiple clusters, see \cite{raoul}. In the case of multiple species, we mention the result in \cite{difrafag2}, in which the emergence (for large times) of multiple clusters for both species is proven. The case of potentials with singular repulsion raises various phenomena related with dimensionality of the clusters, see \cite{balague}. An interesting further direction will be to study (both analytically and numerically) the competition between said effect, which tends to drive the solutions towards heterogeneous states, and the effect of the SIR-type terms, which typically produce homogeneous steady states for large times. Our result in Section \ref{sec:steady} below is a first attempt in this direction, with a very specific choice of the interaction potentials.
\end{enumerate}

\medskip
As mentioned above, our prior task is to provide a mathematical well-posedness theory for the Cauchy problem for \eqref{mainsystem_intro} under fairly general assumptions on the initial conditions. The main mathematical challenge behind this problem consists of the simultaneous presence of nonlocal transport terms and of reaction terms. The nonlocal reaction terms are typically dealt with via characteristics (with a fixed point strategy) or via optimal transport (JKO scheme \cite{JKO}). The latter cannot be used here due to the presence of the reaction terms, some of which are nonlinear. The former also has some inconveniences when it comes to proving global properties, especially if one wants to prove propagation of a certain extent of regularity. Therefore, we use the following approach:
\begin{itemize}
    \item we regularise the model with artificial linear diffusion terms, for which we prove local and global existence and uniqueness of solutions;
    \item we prove uniform estimates on the solution with respect to the artificial diffusion; we stress that the nonlinear reaction terms require strong compactness at least in some $L^p$ space;
    \item we obtain the existence of weak solutions to \eqref{mainsystem_intro} by letting the diffusion go to zero;
    \item we finally prove uniqueness in $L^2$.
\end{itemize}
All the above may be summarised in the following existence and uniqueness result, which is the main result of the paper and it is therefore stated here for simplicity.

\begin{Theorem}[Existence and uniqueness of solutions]\label{thm:main}
    Assume the interaction kernels $W_{\xi \eta}$ satisfy \eqref{eq:ass_reg_W} and \eqref{eq:ass_sym_W}. Assume the initial data $S_0, I_0, R_0$ belong to $L^1(\R^2)\cap L^\infty(\R^2) \cap H^1(\R^2)$ and are nonnegative. Then, for any arbitrary time $T\geq 0$, there exists one and only one
    \[
    (S,I,R)\in L^\infty([0,T]\,;\,\, (L^1(\R^2)\cap L^\infty(\R^2)\cap H^1(\R^2)))^3
    \]
    solving \eqref{mainsystem_intro} in the weak sense and having $(S_0, I_0, R_0)$ as initial datum.
\end{Theorem}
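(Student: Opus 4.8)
The plan is to follow the four-step scheme announced in the introduction: regularise, bound uniformly, pass to the limit, and prove uniqueness by a separate $L^2$ stability estimate. I would regularise \eqref{mainsystem_intro} by adding $\varepsilon\Delta$ to each equation, obtaining $\partial_t S_\varepsilon=\varepsilon\Delta S_\varepsilon+\operatorname{div}(S_\varepsilon\nabla V_S[S_\varepsilon,I_\varepsilon,R_\varepsilon])-\beta S_\varepsilon I_\varepsilon$ and analogously for $I_\varepsilon,R_\varepsilon$, and recover the limit problem as $\varepsilon\downarrow 0$. For fixed $\varepsilon>0$ I would first prove local existence and uniqueness via Duhamel's formula: writing the mild formulation with the heat semigroup $e^{\varepsilon t\Delta}$, the smoothing estimate $\|\nabla e^{\varepsilon t\Delta}f\|_{L^2}\lesssim(\varepsilon t)^{-1/2}\|f\|_{L^2}$ (with its integrable singularity) absorbs the divergence structure of the transport term and lets one close a contraction in $C([0,\tau];L^1\cap L^\infty\cap H^1)^3$ for $\tau$ small. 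The convolution operators are harmless at this stage: by \eqref{eq:ass_reg_W}, $\nabla V$ and $D^2V$ are controlled in $L^\infty$ by the $L^1\cap L^\infty$ norms of the unknowns. Banach's fixed point theorem yields a unique local mild solution, which parabolic smoothing upgrades to a classical one.

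The heart of the argument is a collection of a priori bounds that are \emph{uniform in $\varepsilon$}, and hence continue the local solution up to the arbitrary time $T$. I would obtain, in order: (i) nonnegativity of $S_\varepsilon,I_\varepsilon,R_\varepsilon$ from a parabolic comparison principle, using that each equation is linear in its own compartment once the coefficients are frozen and that the transfer terms enter with signs compatible with preservation of positivity; (ii) the $L^1$ bound, from conservation of the total mass $\int(S_\varepsilon+I_\varepsilon+R_\varepsilon)$ together with the monotonicity $\frac{d}{dt}\int S_\varepsilon\le 0$; (iii) the $L^\infty$ bound, again by the maximum principle, controlling the zeroth-order coefficient $\Delta V_S$ through $\|\Delta W\|_{L^\infty}\|S_\varepsilon\|_{L^1}$ and closing by Gronwall. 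The delicate step is the $H^1$ estimate. Differentiating the equation, testing against $\nabla S_\varepsilon$ and integrating by parts, the diffusion produces a good term $-\varepsilon\|D^2S_\varepsilon\|_{L^2}^2$ which I deliberately discard; instead I distribute derivatives so that the kernels $W_{\xi\eta}$ are \emph{never differentiated more than twice}, e.g. rewriting $\partial_j^2\partial_k V_S=(\partial_j^2W_{SS})*\partial_k S+\cdots$ and bounding it in $L^2$ by $\|D^2W\|_{L^1}\|\nabla S_\varepsilon\|_{L^2}$ through Young's inequality. Every term then closes against $\|\nabla S_\varepsilon\|_{L^2}^2+\|\nabla I_\varepsilon\|_{L^2}^2+\|\nabla R_\varepsilon\|_{L^2}^2$ with constants depending only on the already-controlled $L^1\cap L^\infty$ norms and on the fixed kernels, giving an $\varepsilon$-independent Gronwall inequality for the coupled $H^1$ energy.

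Equipped with the uniform $L^\infty([0,T];L^1\cap L^\infty\cap H^1)$ bounds, and with $\partial_t S_\varepsilon$ bounded in $L^2([0,T];H^{-1})$ read off from the equation, I would extract a limit by compactness. Passing to the limit in the transport terms is comparatively soft: since $\nabla W\in L^2$, weak $L^2$ convergence of the densities already forces strong local convergence of the smoothed fields $\nabla V_\varepsilon$, so a weak-times-strong argument handles $S_\varepsilon\nabla V_S[S_\varepsilon,\ldots]$. The genuine obstacle is the \emph{nonlinear reaction term} $\beta S_\varepsilon I_\varepsilon$, whose limit requires strong $L^2$ convergence of both factors. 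Aubin--Lions--Simon yields strong convergence in $L^2([0,T];L^2_{\mathrm{loc}}(\R^2))$, but on the unbounded domain $\R^2$ the embedding $H^1\hookrightarrow L^2$ fails to be compact, so this must be supplemented by a \emph{tightness} argument controlling the mass at spatial infinity uniformly in $\varepsilon$ (via a moment bound, or a cut-off estimate exploiting the $L^1$ control and the boundedness of $\nabla V_\varepsilon$) in order to upgrade to strong convergence on all of $[0,T]\times\R^2$ and identify $\lim S_\varepsilon I_\varepsilon=SI$. This tightness step on $\R^2$ is where I expect the real work to lie.

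Uniqueness I would prove directly on \eqref{mainsystem_intro} by an $L^2$ stability estimate. Given two solutions with the same datum, set $u=S_1-S_2$, $v=I_1-I_2$, $w=R_1-R_2$ and differentiate $\tfrac12(\|u\|_{L^2}^2+\|v\|_{L^2}^2+\|w\|_{L^2}^2)$. The reaction differences are controlled quadratically using only the $L^\infty$ bounds, via identities such as $S_1I_1-S_2I_2=S_1v+I_2u$. In the transport differences I would split $\operatorname{div}(S_1\nabla V_S[1])-\operatorname{div}(S_2\nabla V_S[2])=\operatorname{div}(u\,\nabla V_S[1])+\operatorname{div}(S_2\,\nabla(V_S[1]-V_S[2]))$; the first piece contributes $\tfrac12\int u^2\Delta V_S[1]$ after integration by parts, while for the second I keep $u$ undifferentiated and exploit that $\nabla(V_S[1]-V_S[2])=\nabla W_{SS}*u+\cdots$ gains a derivative over $u,v,w$, so that both $\|\nabla(V_S[1]-V_S[2])\|_{L^\infty}$ and $\|\Delta(V_S[1]-V_S[2])\|_{L^2}$ are bounded by $\|u\|_{L^2}+\|v\|_{L^2}+\|w\|_{L^2}$. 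Pairing these against $\|\nabla S_2\|_{L^2}$ and $\|S_2\|_{L^\infty}$ — here the $H^1$ regularity of the solutions is exactly what makes $\|\nabla S_2\|_{L^2}$ available — yields a closed inequality $\frac{d}{dt}(\|u\|^2+\|v\|^2+\|w\|^2)\le C(\|u\|^2+\|v\|^2+\|w\|^2)$, and since the initial difference vanishes, Gronwall forces $u\equiv v\equiv w\equiv 0$.
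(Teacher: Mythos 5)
Your proposal is correct and follows the same architecture as the paper's actual proof: regularisation by $\varepsilon\Delta$, local well-posedness via Duhamel and Banach fixed point with the heat-kernel smoothing estimate, $\varepsilon$-uniform $L^1$, $L^\infty$ and $H^1$ bounds (with exactly the paper's device of never putting more than two derivatives on the kernels, cf.\ \eqref{eq:est_kernels2}), Aubin--Lions compactness for the vanishing-viscosity limit, and uniqueness by the same $L^2$ stability estimate with the same splitting of the transport difference and the same use of $H^1$ regularity to pay for $\|\nabla S_2\|_{L^2}$. Two of your steps deviate in execution, in ways worth recording. First, you obtain nonnegativity and the $L^\infty$ bound from comparison/maximum principles with frozen coefficients; the paper instead avoids invoking a maximum principle on the unbounded domain $\mathbb{R}^2$ by a truncation argument (regularised negative parts $\eta_\delta$ and cut-offs $\zeta_n$ in Lemma \ref{lem:nonneg}) and by $L^p$ estimates with $p\nearrow+\infty$ in Proposition \ref{propLp}; your route is legitimate given the boundedness of the drift and of the solution, but the paper's is more self-contained. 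Second, and more substantively, the tightness step you single out as ``where the real work lies'' is in fact not needed: since weak solutions of \eqref{mainsystem_intro} are tested against \emph{compactly supported} $\phi$, strong convergence in $L^2([0,T]\times\Omega)$ for bounded $\Omega$ from Aubin--Lions, upgraded by a diagonal extraction to a.e.\ convergence on $\mathbb{R}^2\times[0,T]$ and combined with the uniform $L^1\cap L^\infty$ bounds and dominated convergence, already identifies $\lim S_\varepsilon I_\varepsilon = SI$ in the weak formulation --- this is precisely what the paper does. Your proposed moment or cut-off bound would indeed deliver global-in-space strong convergence (the velocity fields are uniformly bounded, so second moments propagate), but it is an avoidable detour rather than a gap.
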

% Finally, partly in order to provide a further justification to our approach, we provide a general discussion on steady states and consider a very specific example for a one-dimensional SIS system with repulsive-attractive interactions.

We mention here that we expect the techniques used to prove Theorem \ref{thm:main} to be easily extended to more general nonlocal aggregation systems with many species and with reaction terms featuring linear growth at infinity (assuming the regularity of \eqref{eq:ass_reg_W}). The main difficulty in the case of more general reaction parts lies in performing estimates that guarantee the solution to be global-in-time. The SIR type model we consider in this paper is quite special, since it allows us to perform such estimates taking advantage of the nonnegativity of solutions and the particular form of the reaction part of the species $S$. The extension of this result to more complicated systems is left to future studies.

As mentioned above, the last part of the paper is devoted to steady states for \eqref{eq:SIS_space_intro}, seen as a paradigmatic example to show that the approach we propose in this paper may produce asymptotic states which slightly correct classical homogeneous steady states. More precisely, we shall prove the existence of inhomogeneous stationary states for the system \eqref{eq:SIS_space_intro} with $W$ given in \eqref{eq:W_special}. We will show that a space dependent basic reproduction rate $\mathcal{R}_0$ (as in the classical SIR-type models literature) may be defined to determine existence or non existence of endemic equilibria. In our example, the definition of $\mathcal{R}_0$ involves a parameter from the aggregation kernels (namely, the size of the repulsive range) on top of the usual ones used in classical models, such as the transmission rate and the recovery rate. In the last section we simulate time-depending solutions to \eqref{eq:SIS_space_intro} approaching respectively the disease free equilibrium and the endemic one. The simulation of the latter seems to suggest possible phase transitions between the two steady states. We will devote a further study to that.

\medskip
The paper is structured as follows. In section \ref{sec:approximated} we add an artificial linear diffusion term to \eqref{mainsystem_intro} and prove local existence and uniqueness via Banach fixed point (subsection \ref{subsec:local}), nonnegativity and total mass conservation (subsection \ref{subsec:positivity}), and global existence (subsection \ref{subsec:global}). In section \ref{sec:existence} we first provide additional uniform estimates of the $H^1$ norm (subsection \ref{subsec:H1}) and then prove that the vanishing diffusion limit converge a solution to our model \eqref{mainsystem_intro}  (subsection \ref{subsec:vanishing}), which completes the existence proof. Uniqueness is proven in section \ref{sec:unique}. Section \ref{sec:steady} is devoted to steady states. In subsection \ref{subsec:discussion} we provide a general discussion on the existence of steady states, whereas in subsection \ref{subsec:example} we provide a specific example for an SIS model with repulsive-attractive interactions. the main result of this section is Theorem \ref{thm:states}. Section \ref{sec:numerical} is devoted to numerical simulations for \eqref{eq:SIS_space_intro}.

\section{The approximated problem}\label{sec:approximated}

\subsection{Local existence and uniqueness for the approximated problem}\label{subsec:local}

We consider, for a fixed $\varepsilon>0$, the approximated problem
\begin{equation}\label{mainsystem}
    \left\{\begin{array}{l}
S_{t}=\operatorname{div}\left(S \nabla V_{S}[S, I, R]\right)-\beta S I+\varepsilon \Delta S, \\
I_{t}=\operatorname{div}\left(I \nabla V_{I}[S, I, R]\right)+\beta S I-\alpha I+\varepsilon \Delta I, \\
R_{t}=\operatorname{div}\left(R \nabla V_{R}[S, I, R]\right)+\alpha I+\varepsilon \Delta R,
\end{array}\right.\,
\end{equation}
where the nonlocal operators $V_S$, $V_I$, and $V_R$ are defined in \eqref{eq:nonlocal_terms} and where $\varepsilon>0$ is fixed. We equip \eqref{mainsystem} with initial conditions
$S(x,0)=S_0(x)$, $I(x,0)=I_0(x)$, $R(x,0)=R_0(x)$. For future use, we recall the definition of the two-dimensional Gaussian kernel
\begin{equation}\label{eq:gaussian}
    G_\varepsilon(x,t)=\frac{1}{4\pi \varepsilon t}e^{-|x|^2/4\varepsilon t}\,.
\end{equation}
We recall that
\begin{equation}\label{eq:gaussian_mass}
\|G_\varepsilon(\cdot,t)\|_{L^1(\R^2)}=1 \qquad \hbox{for all $t>0$.}
\end{equation}
Moreover, there exists a constant $C>0$ depending on $\varepsilon$ but independent of $t$, such that
\begin{equation}\label{eq:gaussian_grad}
    \left\|\nabla G_{\varepsilon}(t)\right\|_{L^{1}(\R^2)}=C t^{-1 / 2}\qquad \hbox{for all $t>0$.}
\end{equation}

\begin{Remark}[Use of generic constants]
    \emph{In this section, we will often make use of generic constants depending on the structural parameters of the model \eqref{mainsystem}. Since we are dealing with finitely many of those constants, we will use the same notation $C\geq 0$ for such generic constants. We stress, however, that in this subsection and in the next one, this constant may depend on $\varepsilon$. This is not a problem as long as we have to prove existence and uniqueness for the approximated problem \eqref{mainsystem}. In subsection \ref{subsec:global}, we will instead need to prove uniform estimates with respect to $\varepsilon$. In that case, our generic constant will be independent of $\varepsilon$. The same will happen in section \ref{sec:existence}. In some situations, the constant $C$ will also depend on a finite time horizon $T$ and/or on the initial data. We will make this clear later on when needed.}
\end{Remark}

For future use, we state the following Lemma.

\begin{Lemma}\label{lem:nonlocal_etsimates}
    Let $T\geq0$. Assume $W_{\xi \eta}$ satisfies \eqref{eq:ass_reg_W} for all $\xi,\eta\in \{S,I,R\}$. Then, there exists a constant $C$ depending only on the interaction kernels $W_{\xi\eta}$, $\xi,\eta\in \{S,I,R\}$, such that, for all two-dimensional multi-indexes $\gamma$ with $1\leq|\gamma|\leq 2$, there holds
    \begin{align}
       & \sup_{0\leq t\leq T}\left[\|D^\gamma V_S[S,I,R]\|_{L^\infty(\R^2)}+\|D^\gamma V_I[S,I,R]\|_{L^\infty(\R^2)}+\|D^\gamma V_R[S,I,R]\|_{L^\infty(\R^2)}\right]\nonumber\\
       & \ \leq C \sup_{0\leq t\leq T}\left(\|S\|_{L^p(\R^2)}+\|I\|_{L^p(\R^2)}+\|R\|_{L^p(\R^2)}\right)\label{eq:est_kernels},
    \end{align}
    for all $p\in[1,+\infty]$. Moreover, for all multi-indexes $\gamma$ such that $|\gamma|=3$, there holds
      \begin{align}
       & \sup_{0\leq t\leq T}\left[\|D^\gamma V_S[S,I,R]\|_{L^2(\R^2)}+\|D^\gamma V_I[S,I,R]\|_{L^2(\R^2)}+\|D^\gamma V_R[S,I,R]\|_{L^2(\R^2)}\right]\nonumber\\
       & \ \leq C \sup_{0\leq t\leq T}\left(\|\nabla S\|_{L^p(\R^2)}+\|\nabla I\|_{L^p(\R^2)}+\|\nabla R\|_{L^p(\R^2)}\right),\label{eq:est_kernels2}
    \end{align}
    for all $p\in [1,2]$.
\end{Lemma}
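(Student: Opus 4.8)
The plan is to reduce both inequalities to a single elementary estimate on a convolution of the form $D^\gamma W_{\xi\eta} * \rho$, where $\rho$ stands for one of the densities $S, I, R$ (or one of their derivatives). Indeed, by the definition \eqref{eq:nonlocal_terms} each operator $V_\xi$ is a finite linear combination of such convolutions, so by the triangle inequality it suffices to bound each summand and then add the three contributions, absorbing the finitely many kernel-dependent constants into a single $C$. Moreover, every bound below is \emph{instantaneous}, i.e.\ it holds pointwise in $t$, so the supremum over $[0,T]$ passes through both sides with no extra work. Throughout I will use the standard interpolation fact that the assumption \eqref{eq:ass_reg_W}, namely $D^\gamma W_{\xi\eta}\in L^1(\R^2)\cap L^\infty(\R^2)$ for $1\le|\gamma|\le 2$, implies $D^\gamma W_{\xi\eta}\in L^q(\R^2)$ for \emph{every} $q\in[1,\infty]$.

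For the first estimate \eqref{eq:est_kernels} I would move all derivatives onto the kernel, using $D^\gamma(W_{\xi\eta}*\rho)=(D^\gamma W_{\xi\eta})*\rho$, which is legitimate since $1\le|\gamma|\le 2$ keeps us within the range where the kernel derivatives are integrable. Then, by Hölder's inequality applied to the convolution integral and translation invariance of Lebesgue measure,
\[
\|(D^\gamma W_{\xi\eta})*\rho\|_{L^\infty(\R^2)}\le \|D^\gamma W_{\xi\eta}\|_{L^{p'}(\R^2)}\,\|\rho\|_{L^p(\R^2)},
\]
where $p'$ is the conjugate exponent of $p$. Since $D^\gamma W_{\xi\eta}\in L^1\cap L^\infty\subset L^{p'}$ for every $p'\in[1,\infty]$, this bound is valid for every $p\in[1,\infty]$, including the endpoints $p=1$ and $p=\infty$. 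Summing over the three kernels entering each $V_\xi$, and over $\xi\in\{S,I,R\}$, yields \eqref{eq:est_kernels} with a constant depending only on the $L^{p'}$ norms of the $D^\gamma W_{\xi\eta}$.

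The second estimate \eqref{eq:est_kernels2} is where the genuine constraint appears, and it is the point I expect to require the most care. For $|\gamma|=3$ the assumption \eqref{eq:ass_reg_W} no longer controls $D^\gamma W_{\xi\eta}$, so I \emph{cannot} place all three derivatives on the kernel. The remedy is to split $D^\gamma=D^{\gamma'}\partial_{x_j}$ with $|\gamma'|=2$ and one first-order factor $\partial_{x_j}$, and to distribute asymmetrically: two derivatives onto the kernel and one onto the density, $D^\gamma(W_{\xi\eta}*\rho)=(D^{\gamma'}W_{\xi\eta})*(\partial_{x_j}\rho)$. Young's convolution inequality with target exponent $2$ then gives
\[
\|(D^{\gamma'}W_{\xi\eta})*(\partial_{x_j}\rho)\|_{L^2(\R^2)}\le \|D^{\gamma'}W_{\xi\eta}\|_{L^{a}(\R^2)}\,\|\partial_{x_j}\rho\|_{L^p(\R^2)},\qquad \tfrac1a=\tfrac32-\tfrac1p.
\]
As $p$ ranges over $[1,2]$ the required exponent $a$ ranges over $[1,2]$, so $D^{\gamma'}W_{\xi\eta}\in L^1\cap L^\infty\subset L^a$ and the estimate closes for all $p\in[1,2]$; summing over the kernels and compartments produces \eqref{eq:est_kernels2}. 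The main obstacle is precisely that the kernel regularity is capped at order two: this is what forces one derivative onto the density (hence the $\nabla S,\nabla I,\nabla R$ on the right-hand side) and simultaneously explains why the target norm must be relaxed from $L^\infty$ to $L^2$ and the admissible range of $p$ from $[1,\infty]$ to $[1,2]$, since only these choices keep the conjugate exponent $a$ inside $[1,2]$ where $D^{\gamma'}W_{\xi\eta}$ is controlled.
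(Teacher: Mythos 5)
Your proposal is correct and follows essentially the same route as the paper, whose proof is a two-line sketch invoking exactly the ingredients you spell out: differentiating the kernels inside the convolutions, Young's (H\"older's) inequality, and the $L^p$-interpolation $D^\gamma W_{\xi\eta}\in L^1\cap L^\infty\subset L^q$ for all $q\in[1,\infty]$. Your exponent bookkeeping for \eqref{eq:est_kernels2} --- splitting $D^\gamma=D^{\gamma'}\partial_{x_j}$ with two derivatives on the kernel and one on the density, and checking that $\tfrac1a=\tfrac32-\tfrac1p$ keeps $a\in[1,2]$ precisely when $p\in[1,2]$ --- is the detail the paper leaves implicit, and you have it right.
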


\begin{proof}
    The proof follows easily by differentiating the interaction kernels in the convolution operators $V_S, V_I, V_R$ and by applying Young's inequality for convolutions. Recall that assumptions \eqref{eq:ass_reg_W} implies $D^\gamma W_{\xi,\eta}$ belong to $L^p(\R^2)$ for all $p\in [1,+\infty]$ by $L^p$-interpolation for all $1\leq |\gamma|\leq 2$.
\end{proof}

We start by proving the local-in-time existence of solutions for \eqref{mainsystem} via Duhamel's principle and Banach fixed point.

\begin{Proposition}\label{prop:local}
Assume \eqref{eq:ass_reg_W} and let the initial condition satisfy $\left(S_{0}, I_{0}, R_{0}\right) \in\left(L^{1} \cap L^{\infty}\left(\mathbb{R}^{2}\right)\right)^{3}$. Then, there exists $T> 0$ such that there exists one and only one classical solution $(S,I,R) \in (C^{1,2}_{t,x}(\mathbb R^2 \times [0,T]))^3$ to system \eqref{mainsystem} in $\mathbb R^2 \times [0,T]$ with initial condition $\left(S_{0}, I_{0}, R_{0}\right)$.
\end{Proposition}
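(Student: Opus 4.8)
The plan is to prove local existence and uniqueness via Duhamel's principle combined with the Banach fixed point theorem, working in a suitable complete metric space of functions on a short time interval. First I would rewrite the system \eqref{mainsystem} in its mild (integral) formulation using the heat semigroup generated by $\varepsilon\Delta$. Writing $G_\varepsilon$ for the Gaussian kernel \eqref{eq:gaussian}, each unknown satisfies a fixed point equation of the schematic form
\begin{equation*}
    S(t)=G_\varepsilon(t)\ast S_0 + \int_0^t G_\varepsilon(t-s)\ast\bigl[\operatorname{div}(S\nabla V_S[S,I,R]) - \beta S I\bigr](s)\,ds,
\end{equation*}
and analogously for $I$ and $R$ (with the reaction terms $+\beta S I-\alpha I$ and $+\alpha I$ respectively). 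The key structural observation is that the transport term is a divergence, so one integrates by parts to move the derivative onto the Gaussian, replacing $G_\varepsilon(t-s)\ast\operatorname{div}(\cdots)$ by $\nabla G_\varepsilon(t-s)\ast(\cdots)$; this is exactly where the estimate \eqref{eq:gaussian_grad} on $\|\nabla G_\varepsilon(t)\|_{L^1}=Ct^{-1/2}$ becomes essential, since it controls the smoothing cost of the convolution against the singular transport contribution.

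Next I would set up the fixed point map $\Phi$ on the Banach space $X=C([0,T];(L^1\cap L^\infty(\R^2))^3)$ equipped with the sup-in-time norm, and work inside a closed ball $B_M\subset X$ of radius $M$ (chosen comparable to the initial data norms) whose elements also preserve the nonnegativity if desired later. The goal is to show that for $T$ small enough, $\Phi$ maps $B_M$ into itself and is a contraction. To estimate $\Phi$ I would use Young's inequality for convolutions together with Lemma \ref{lem:nonlocal_etsimates}, which bounds $\|\nabla V_S[S,I,R]\|_{L^\infty}$ (and the analogous quantities for $I,R$) by the $L^p$ norms of $S,I,R$; this lets me control the transport term $\|S\nabla V_S\|_{L^1\cap L^\infty}$ by a quadratic expression in the $X$-norm. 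The reaction term $\beta S I$ is likewise bilinear and controlled by $\|S\|_{L^\infty}\|I\|_{L^1}$ and $\|S\|_{L^1}\|I\|_{L^\infty}$ etc. Using \eqref{eq:gaussian_mass} for the zeroth-order pieces and integrating $t^{-1/2}$ in time (which produces a harmless $T^{1/2}$ factor) for the divergence pieces, both the self-mapping and the contraction estimates acquire a prefactor that is a positive power of $T$, so choosing $T$ sufficiently small closes the argument and yields a unique mild solution in $B_M$.

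Finally, I would upgrade this mild solution to a classical solution in $(C^{1,2}_{t,x})^3$ by a standard bootstrap/parabolic-regularity argument. Once the mild solution is known to lie in $L^1\cap L^\infty$ uniformly on $[0,T]$, the coefficients $\nabla V_\xi[S,I,R]$ and the reaction terms are bounded and sufficiently regular (the kernels are smooth by \eqref{eq:ass_reg_W}, so convolutions against them gain derivatives), and the right-hand sides can be treated as Hölder-continuous data for the linear heat equation; interior Schauder estimates then give the claimed $C^{1,2}_{t,x}$ regularity. I expect the main obstacle to be the careful bookkeeping of the divergence-form transport term: one must verify that moving the gradient onto the Gaussian and invoking the $t^{-1/2}$ bound is legitimate for the rough class of functions in $X$, and that the resulting time integral is genuinely convergent and contributes the small-$T$ factor needed for the contraction. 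The nonlinearity is only quadratic and the kernels are as regular as one could wish, so beyond this transport estimate the analysis is routine; the delicate point is purely the interplay between the singular-in-time heat kernel gradient and the low spatial regularity assumed on the data.
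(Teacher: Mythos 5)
Your proposal follows essentially the same route as the paper: a mild (Duhamel) formulation with the fixed point map set up on a closed ball in $C([0,T];(L^1\cap L^\infty(\R^2))^3)$ under the sup-in-time norm, with the divergence transferred onto the Gaussian so that the $\|\nabla G_\varepsilon(t)\|_{L^1}\sim t^{-1/2}$ bound \eqref{eq:gaussian_grad}, Young's inequality, and Lemma \ref{lem:nonlocal_etsimates} yield self-mapping and contraction estimates with small-$T$ prefactors, exactly as in the paper's inequalities \eqref{eq:estimate_wp}--\eqref{eq:estimate_con}. Your closing bootstrap/Schauder argument for upgrading the mild solution to a classical $C^{1,2}_{t,x}$ solution is the standard completion of the step the paper itself invokes via Duhamel's principle and omits, so the two arguments coincide in substance.
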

\begin{proof}
For $T>0$ and $r>0$ we define the Banach space $\left(X_{r, T},\vertiii{\cdot}_T\right)$ as
%$$
%X_{r, T}:=\left\{(S, I, R) \in C\left([0, T] ; L^{1}\left(\mathbb{R}^{2}\right) \cap L^{\infty}\left(\mathbb{R}^{2}\right)\right):\left\|S-S_{0}\right\|_{+}+\|I-I\|_{+}\left\|\mathbb{R}-R_{0}\right\|_{T} \leq r\right\}\right.
%$$
\begin{align*}
    & X_{r, T}:=\left\{\vphantom{\int}(S, I, R) \in C\left([0, T] ; \left(L^{1}\left(\mathbb{R}^{2}\right) \cap L^{\infty}\left(\mathbb{R}^{2}\right)\right)^3\right):\,\vertiii{S}_{T}+\vertiii{I}_{T}+\vertiii{R}_{T}\leq r\right\}\,,
\end{align*}
where, for any continuous curve $f:[0,T]\rightarrow L^1(\R^2)\cap L^\infty(\R^d)$, we have denoted the norm
\[
\vertiii{f}_T=\sup_{0\leq t\leq T}  \left(\|f(\cdot,t)\|_{L^\infty(\R^d)}+\|f(\cdot,t)\|_{L^1(\R^d)}\right)
\]
and by abuse of notation, we also denote
\[\vertiii{(S,I,R)}_T=\vertiii{S}_{T}+\vertiii{I}_{T}+\vertiii{R}_{T}\,.\]
Our goal is to define a map $\mathcal{T}: X_{r, T} \rightarrow X_{r, T}$ such that
$$
\begin{gathered}
\mathcal{T} (\tilde{S},\Tilde{I},\Tilde{R})(\cdot,t):=G_{\varepsilon}(\cdot,t)*\left(\begin{array}{cc}
S_0\\ I_0\\ R_0 \end{array} \right)\\
+ \int_{0}^{t} \left(\begin{array}{cc}
 G_{\varepsilon}(.,t-s)*(\mathrm{div}(\tilde{S} \nabla V_S [\Tilde{S},\Tilde{I},\Tilde{R}])-\beta \Tilde{S} \Tilde{I})(.,s)\\
 G_{\varepsilon}(.,t-s)*(\mathrm{div}(\tilde{I} \nabla V_I [\Tilde{S},\Tilde{I},\Tilde{R}])+\beta \Tilde{S} \Tilde{I}-\alpha \tilde{I})(.,s)\\
 G_{\varepsilon}(.,t-s)*(\mathrm{div}(\tilde{R} \nabla V_R [\Tilde{S},\Tilde{I},\Tilde{R}])+\alpha \Tilde{I})(.,s)\\
 \end{array}\right) ds.
\end{gathered}
$$
If we can prove that $\mathcal{T}$ is well-defined and a contraction for some $T>0$ and $r>0$, then by Banach's fixed point theorem we can conclude that there exists one and only one fixed point for $\mathcal{T}$. Then, a standard Duhamel's principle for the linear diffusion equation implies that such a fixed point is the unique classical solution to system \eqref{mainsystem} for all $0 \leq t \leq T$. We shall use the notation
\[\mathcal{T} (\tilde{S},\Tilde{I},\Tilde{R})=\left(\mathcal{T}_1(\tilde{S},\Tilde{I},\Tilde{R}), \mathcal{T}_2(\tilde{S},\Tilde{I},\Tilde{R}), \mathcal{T}_3(\tilde{S},\Tilde{I},\Tilde{R})\right)\,.\]
In order to prove that $\mathcal{T}$ is well-defined, we shall prove that we can find $r,T>0$ such that, if $(\tilde{S}, \tilde{I}, \tilde{R}) \in X_{r, T}$, then $\mathcal{T}(\tilde{S}, \tilde{I}, \tilde{R}) \in X_{r, T}$. In other words, we need to prove the assumption $(\tilde{S}, \tilde{I}, \tilde{R}) \in X_{r, T}$ implies the estimate $\vertiii{\mathcal{T}_{1}(\tilde{S}, \tilde{I}, \tilde{R})}_{T}+\vertiii{\mathcal{T}_{2}(\tilde{S}, \tilde{I}, \tilde{R})}_{T}+ \vertiii{\mathcal{T}_{3}(\tilde{S}, \tilde{I}, \tilde{R})}_{T}\leq r$. We start by estimating $L^p$ norms, $p\in\{1,+\infty\}$, for fixed positive times $t>0$ (we omit the time dependency for simplicity in the notation) as follows:

\begin{align*}
   & \|\mathcal{T}_1 (\tilde{S},\Tilde{I},\Tilde{R})\|_{L^{p}(\mathbb{R}^{2})} \leq \|\mathcal{I}_1\|_{L^{p}(\mathbb{R}^{2})}+\|\mathcal{I}_2\|_{L^{p}(\mathbb{R}^{2})},\\
    & \mathcal{I}_1= G_\varepsilon *S_0,\\
    &\mathcal{I}_2 = \int_{0}^{t} G_\varepsilon(.,t-s)*\left(\mathrm{div}(\tilde{S}\nabla V_S [\tilde{S},\tilde{I},\tilde{R}]) -\beta \tilde{S}\tilde{I}\right)(.,s)ds\,.
\end{align*}
We control the norm of $\mathcal{I}_1$ via Young's inequality for convolutions,
\begin{align*}
   &  \|\mathcal{I}_1\|_{L^{p}(\mathbb{R}^{2})}=\left\|G_{\varepsilon} * S_{0}\right\|_{L^{p
   }\left(\mathbb{R}^{2}\right)} \leq \left\|G_{\varepsilon}\right\|_{L^{1}\left(\mathbb{R}^{2}\right)}\left\|S_{0}\right\|_{L^{p}\left(\mathbb{R}^{2}\right)} = \left\|S_{0}\right\|_{L^{p}\left(\mathbb{R}^{2}\right)}\,.
\end{align*}
As for $\mathcal{I}_2$, we have the estimate
\begin{align*}
    & \left| \mathcal{I}_2\right| \leq \beta \int_{0}^{t} G_{\varepsilon}(., t-s) *|\tilde{S} \tilde{I}|(\cdot,s) d s + \int_{0}^{t} \left|\nabla G_{\varepsilon}(., t-s) *\left(\tilde{S} \nabla V_{S}[\tilde{S}, \tilde{I}, \tilde{R}]\right)(\cdot,s)\right| d s\,,
 \end{align*}
 where the last convolution above is computed by taking the scalar product of the two vector fields $\nabla G_\varepsilon$ and $\tilde{S}\nabla V_S$. The above implies, once again via Young's inequality for convolutions,
 \begin{align*}
& \|\mathcal{I}_2\|_{L^{p}(\mathbb{R}^{2})} \leq \beta \int_0^t \left \| G_{\varepsilon}(.,t-s) \right\|_{L^{1}(\mathbb{R}^2)} \left\|\tilde{S}(\cdot,s)\right\|_{L^{\infty}(\mathbb{R}^2)} \left\| \tilde{I}(\cdot,s) \right\|_{L^{p}(\mathbb{R}^2)}ds \\
 & +\int _0 ^t \left\| \nabla G_{\varepsilon}(.,t-s) \right\|_{L^{1}(\mathbb{R}^2)}\left \| \tilde{S}(\cdot,s) \right\|_{L^{p}(\mathbb{R}^2)} \left\| \nabla V_S[\tilde{S},\tilde{I},\tilde{R}](\cdot,s) \right\|_{L^{\infty}(\mathbb{R}^2)} ds.
\end{align*}
Using \eqref{eq:est_kernels}, we have
\begin{align*}
    & \left\|\nabla V_{S}[\tilde{S}, \tilde{I}, \tilde{R}]\right\|_{L^{\infty}\left(\mathbb{R}^{2}\right)}
\leq C\sup_{0\leq t\leq T}\left(\|\tilde{S}\|_{L^1(\R^2)}+\|\tilde{I}\|_{L^1(\R^2)}+\|\tilde{R}\|_{L^1(\R^2)}\right)\,,
\end{align*}
and the assumption $(\tilde{S},\tilde{I},\tilde{R})\in X_{r,T}$ implies there exists $C\geq 0$ such that
\[
\left\|\nabla V_{S}[\tilde{S}, \tilde{I}, \tilde{R}]\right\|_{L^{\infty}\left(\mathbb{R}^{2}\right)}
\leq Cr\,.
\]
% $$
% \begin{gathered}
% \left\|\nabla V_{S}[\tilde{S}, \tilde{I}, \tilde{R}]\right\|_{L^{\infty}\left(\mathbb{R}^{2}\right)}=\left\|\nabla W_{S S} * \tilde{S}+\nabla W_{S I} * \tilde{I}+\nabla W_{S R} * \tilde{R}\right\|_{L^{\infty}\left(\mathbb{R}^{2}\right)} \\
% \leq\left\| \nabla W_{S S}\right\|_{L^{\infty}(\mathbb{R}^2}\left\| \tilde{S}\right\|_{L^{1}\left(\mathbb{R}^{2}\right)}+\left\| \nabla W_{S I}\right\|_{L^{\infty}(\mathbb{R}^2)}\left\| \tilde{I}\right\|_{L^{1}\left(\mathbb{R}^{2}\right)}+\left\| W_{S R}\right\|_{L^{\infty}\left(\mathbb{R}^{2}\right)}\left\| \tilde{R} \right\|_{L^{1}\left(\mathbb{R}^{2}\right)} \\
% \leq C_{1}\left\|\tilde{S}\right\|_{L^{1}\left(\mathbb{R}^{2}\right)}+C_{2}\left\|\tilde{I}\right\|_{L^{1} \left (\mathbb{R}^{2}\right)}+C_{3}\left\|\tilde{R}\right\|_{L^{1}\left(\mathbb{R}^{2}\right)} \leq Cr
% \end{gathered}
% $$
Hence, there exist a constants $C\geq 0$ independent of $\varepsilon$ and $t$ such that
\begin{align*}
    & \left\|\mathcal{T}_1 (\tilde{S}, \tilde{I}, \tilde{R})\right\|_{L^{p}\left(\mathbb{R}^{2}\right)} \leq\left\|S_{0}\right\|_{L^{p}(\R^2)}\\
& \ +\beta \int_{0}^{t}\left\|\tilde{S}(\cdot,s)\right\|_{L^{\infty}\left(\R^{2}\right)}\left\|\tilde{I}(\cdot,s)\right\|_{L^{p}\left(\mathbb{R}^{2}\right)} d s+C r\int_{0}^{t} (t-s)^{- \frac{1}{2}} \left\| \tilde{S}(\cdot,s) \right\|_{L^{p}\left(\mathbb{R}^{2}\right)} d s \\
& \ \leq C\left( r^{2} t+ r^{2} t^{\frac{1}{2}}+1\right)\,.
\end{align*}
Here, we have also used \eqref{eq:gaussian_grad}. We observe that $C$ may depend on the initial datum $S_0$ and on structural parameters of the problem \eqref{mainsystem_intro} such as $\alpha$, $\beta$, $W_{\xi\eta}$ for $\xi,\eta\in \{S,I,R\}$.
We may now take the supremum with respect to $t\in [0,T]$ and get, for a suitable $C$,
\begin{equation}\label{eq:estimate0}
\vertiii{\mathcal{T}_1 (\tilde{S}, \tilde{I}, \tilde{R})}_{T} \leq C\left( T^{\frac{1}{2}} r^{2} + r^{2} T+1\right) \,.
\end{equation}
The estimate \eqref{eq:estimate0} may be easily extended to the two components $\mathcal{T}_2 (\tilde{S}, \tilde{I}, \tilde{R})$ and $\mathcal{T}_3 (\tilde{S}, \tilde{I}, \tilde{R})$. The additional reaction terms are controlled in a very similar way as above. We remark that some of these terms may imply linear growth with respect to $r$ on the right-hand side. We therefore conclude there exists a constant $C\geq 0$ independent of $\varepsilon$ and of $T$ such that
\begin{equation}\label{eq:estimate1}
    \vertiii{\mathcal{T}(\tilde{S},\tilde{I},\tilde{R})}_T\leq C\left(1+ (r+r^2)(T^{1/2}+T)\right)\,.
\end{equation}

We next prove the contraction estimate for $\mathcal{T}$ on $X_{r,T}$. Let
$\left(S_1,I_1,R_1\right),\left(S_2,I_2,R_2\right)\in X_{r,T}$.
% and $\left(S_1,I_1,R_1\right)(0)=\left(S_2,I_2,R_2\right)(0)=\left(S_0,I_0,R_0\right).$
We have
\begin{align*}
& \vertiii{\mathcal{T}_1\left(S_1,I_1,R_1\right)-\mathcal{T}_1\left(S_2,I_2,R_2\right)}_T\\
& \ = \sup_{0\leq t\leq T}\left(\left\|\mathcal{T}_1\left(S_1,I_1,R_1\right)-\mathcal{T}_1\left(S_2,I_2,R_2\right)\right\|_{L^{\infty}\left(\mathbb{R}^2\right)}+\left\|\mathcal{T}_1 \left(S_1,I_1,R_1\right)-\mathcal{T}_1\left(S_2,I_2,R_2\right)\right\|_{L^{1}\left(\mathbb{R}^2\right)}\right).
\end{align*}

Below we provide contraction in $L^p$ for $p\in\{1,+\infty\}$ by using Young's inequality for convolution and estimating
\begin{align*}
    & \left\|\mathcal{T}_1\left(S_1,I_1,R_1\right)-\mathcal{T}_1\left(S_2,I_2,R_2\right)\right\|_{L^p\left(\mathbb{R}^2\right)}
% & \ \leq \left\|\int_0^t G_{\varepsilon}(.,t-s)*\left(\mathrm{div}\left(S_1\nabla V_S[S_1,I_1,R_1]\right)(s)-\beta S_1 I_1(s)\right) ds\right. \\
% & \ \left.-\int_0^t G_{\varepsilon}(.,t-s)*\left(\mathrm{div}\left(S_2\nabla V_S[S_2,I_2,R_2]\right)(s)-\beta S_2 I_2(s)\right) ds \right\|_{L^{\infty}}\\
% \leq \beta \left \|\int_0^t G_{\varepsilon}\left(.,t-s\right)*\left(S_1 I_1 - S_2 I_2\right)(\cdot,s) ds\right\|_{L^{p}}\\
% & \ +\left \|\int_0^t \nabla G_{\varepsilon}\left(.,t-s\right)*\left(S_1 \nabla V_S[S_1,I_1,R_1]-S_2 \nabla V_S[S_2,I_2,R_2]\right)(\cdot,s) ds\right\|_{L^{p}}\\
 \leq \beta \int_{0}^{t}\left\|G_{\varepsilon}(.,t-s)\right\|_{L^{1}}\left\|S_{1} I_{1}(\cdot,s)-S_{2} I_{2}(\cdot,s)\right\|_{L^{p}} d s\\
& \ +\int_{0}^{t}\|\nabla G(.,t-s)\|_{L^{1}}\left\|S_{1} \nabla V_{S}\left[S_{1} I_{1}, R_{1}\right](\cdot,s)-S_{2} \nabla V_S \left[S_{2} I_{2}, R_{2}\right](\cdot,s)\right\|_{L^{p}} d s\,.
\end{align*}
Since
\begin{align*}
    & \left\|S_{1} I_{1}-S_{2} I_{2}\right\|_{L^{p}}=\left\|S_{1} I_{1}-S_{1} I_{2}+S_{1} I_{2}-S_{2} I_{2}\right\|_{L^{p}}\\
& \ \leq\left\|S_{1}\right\|_{L^{\infty}}\left\|I_{1}-I_{2}\right\|_{L^{p}}+\left\|I_{2}\right\|_{L^{\infty}}\left\|S_{1}-S_{2}\right\|_{L^{p}},
\end{align*}
and
\begin{align*}
    & \left\| S_{1} \nabla V_{S}\left[S_{1}, I_{1}, R_{1}\right]-S_2\nabla V_{S}\left[S_{2} I_{2}, R_{2}\right]\right\|_{L^{p}}\\
& \ \leq\left\| S_{1}\right\|_{L^{p}}\left\| \nabla V_{S}\left[S_{1}, I_{1}, R_{1}\right]-\nabla V_{S}\left[S_{2}, I_{2}, R_{2}\right]\right\|_{L^{\infty}} +\left\| S_{1}-S_{2}\right\|_{L^{p}}\left\| \nabla V_{S}\left[S_{2}, I_{2}, R_{2}\right] \right\|_{L^{\infty}},
\end{align*}
we may estimate the terms involving the nonlocal operator $\nabla V_S$ via \eqref{eq:est_kernels}, namely
\begin{align*}
    & \left \|\nabla V_S\left[S_2,I_2,R_2\right]\right\|_{L^{\infty}} \leq C\sup_{0\leq t\leq T}\left(\left\|S_2\right\|_{L^1} + \left\|I_2\right\|_{L^1} + \left\|R_2\right\|_{L^1}\right) \leq Cr,
\end{align*}
and similarly, for a suitable constant $C\geq 0$,
\begin{align*}
    & \left \|\nabla V_S\left[S_1,I_1,R_1\right] - \nabla V_S\left[S_2,I_2,R_2\right]\right\|_{L^{\infty}}
%     \leq \left \|\nabla W_{SS}*\left(S_1 - S_2\right)\right\|_{L^{\infty}}\\
% & \ + \left \|\nabla W_{SI}*\left(I_1 - I_2\right)\right\|_{L^{\infty}} + \left \|\nabla W_{SR}*\left(R_1 - R_2\right)\right\|_{L^{\infty}}\\
% & \ \leq \left\|\nabla W_{SS}\right\|_{L^{\infty}}\left\|S_1-S_2\right\|_{L^{1}}+ \left\|\nabla W_{SI}\right\|_{L^{\infty}}\left\|I_1-I_2\right\|_{L^{1}} + \left\|\nabla W_{SR}\right\|_{L^{\infty}}\left\|R_1-R_2\right\|_{L^{1}}\\
\leq C \sup_{0\leq t\leq T}\left(\left\|S_1-S_2\right\|_{L^{1}}+  \left\|I_1-I_2\right\|_{L^{1}} +  \left\|R_1-R_2\right\|_{L^{1}}\right) \,,
\end{align*}
to obtain, once again using \eqref{eq:gaussian_grad} and for a suitable constant $C$,
\begin{align*}
    & \left\|\mathcal{T}_1\left(S_1,I_1,R_1\right)-\mathcal{T}_1\left(S_2,I_2,R_2\right)\right\|_{L^{p}\left(\mathbb{R}^2\right)}\\
    & \ \leq C t \left (\left\|S_1\right\|_{L^{\infty}}\left\|I_1-I_2\right\|_{L^{p}}+ \left\|I_2\right\|_{L^{\infty}}\left\|S_1-S_2\right\|_{L^{p}}\right)\\
    & \
    + C t^{\frac{1}{2}}\left(r\left\|S_1-S_2\right\|_{L^{p}}+\left\|S_1\right\|_{L^{p}}\left(\vertiii{S_1-S_2}_{T} + \vertiii{I_1-I_2}_{T} + \vertiii{R_1-R_2}_{T}\right)\right)\\
    & \ \leq C rt\left(\vertiii{I_1-I_2}_{T} + \vertiii{S_1-S_2}_{T}\right)\\
    & \ +Crt^{\frac{1}{2}}\left(\vertiii{S_1-S_2}_{T}+\vertiii{I_1-I_2}_{T}+\vertiii{R_1-R_2}_{T}\right).
\end{align*}
Using very similar computations for the other two components $\mathcal{T}_2$ and $\mathcal{T}_3$ (we omit the details), we obtain
% Using the method for $\left\|\left|\mathcal{T}_2  \left(S_{1}, I_{1}, R_{1}\right)-\mathcal{T}_2  \left(S_{2}, I_{2}, R_{2}\right)\right|\right\|_{T}$ and $\left\|\left|\mathcal{T}_3   \left(S_{1}, I_{1}, R_{1}\right)-\mathcal{T}_3   \left(S_{2}, I_{2}, R_{2}\right)\right|\right\|_{T}$ we have the following inequalities.
\begin{align}
    & \vertiii{\mathcal{T}(S_1,I_1,R_1)-\mathcal{T}(S_2,I_2,R_2)}_T\nonumber\\
& \ \leq C\left(\vertiii{S_{1}-S_{2}}_{T}+\vertiii{I_{1}-I_{2}}_{T}+\vertiii{R_{1}-R_{2}}_{T}\right)\left(r T+ T^{\frac{1}{2}}+ T\right),\label{eq;estimate2}
\end{align}
for a suitable positive constant $C$.
Combining the two estimates \eqref{eq:estimate1} and \eqref{eq;estimate2}, in order to have the map $\mathcal{T}$ well-defined and a strict contraction we need the two constants $r>0$ and $T>0$ such that the following inequalities are satisfied:
\begin{align}
   & C\left(1+ (r+r^2)(T^{1/2}+T)\right)\leq r\label{eq:estimate_wp},\\
   &C\left( T+ r T^{\frac{1}{2}} + T\right) < 1\label{eq:estimate_con}.
\end{align}
% and
% $$
% \begin{gathered}
%     \left\|\left|\tau_1 \left(S_1,I_1,R_1\right)-\tau_1 \left(S_2,I_2,R_2\right)\right|\right\|_{T} + \left\|\left|\tau_2 \left(S_1,I_1,R_1\right)-\tau_2 \left(S_2,I_2,R_2\right)\right|\right\|_{T}\\
%     +\left\|\left|\tau_3 \left(S_1,I_1,R_1\right)-\tau_3 \left(S_2,I_2,R_2\right)\right|\right\|_{T}\\
%     \leq \left(\left\|\left|S_1-S_2\right|\right\|_{T} + \left\|\left|I_1-I_2\right|\right\|_{T} + \left\|\left|R_1-R_2\right|\right\|_{T}\right)\left(\tilde{C}_1 r T+\tilde{C}_2 r T^{\frac{1}{2}} + \tilde{C}_3 T \right)
% \end{gathered}
% $$
% \begin{equation}
% \tilde{C}_{1} T+\tilde{C}_2 r T^{\frac{1}{2}} +\tilde{C}_{3} T < 1
% \end{equation}
It is easily seen that a suitable choice of $r$ large enough and $T$ small enough allows to fulfill both inequalities \eqref{eq:estimate_wp} and \eqref{eq:estimate_con}.
Hence, Banach's fix point implies the existence of a unique $\left(S,I,R\right)\in X_{r,T}$ such that $\mathcal{T} \left(S,I,R\right)=\left(S,I,R\right)$. The solution to the fixed point problem is actually a classical solution to the Cauchy problem for \eqref{mainsystem}. This is a consequence of Duhamel's principle. We omit the details.
% Hence $\left(S,I,R\right)$ satisfies the equality below obtained using Duhamel's principle for the system (1) that indicates the solution to this system in $\mathbb{R}^2 \times [0,T]$.
% $$
% \begin{gathered}
% \left(
% \begin{array}{cc}
% S(x,t)\\
% I(x,t)\\
% R(x,t)
% \end{array}
% \right)
% =G_{\varepsilon}*\left(\begin{array}{cc}
% S_0\\ I_0\\ R_0 \end{array} \right)\\
% + \int_{0}^{t} \left(\begin{array}{cc}
%  G_{\varepsilon}(.,t-s)*(\operatorname{div}(S \nabla V_S [S,I,R])-\beta S I)(.,s)\\
%  G_{\varepsilon}(.,t-s)*(\operatorname{div}(I \nabla V_I [S,I,R])+\beta S I-\alpha I)(.,s)\\
%  G_{\varepsilon}(.,t-s)*(\operatorname{div}(R \nabla V_R [S,I,R])+\alpha I)(.,s)\\
%  \end{array}\right) ds.
% \end{gathered}
% $$
% The Gaussian function above is a function in $C^{\infty}$. Based on the equation above and the fundamental theorem of calculus the functions $(S,I,R)$ are at least differentiable with respect to $t$ and two times differentiable with respect to $x$. Hence $(S,I,R)\in (C_{t, x}^{1,2})^3$ is a local solution to the problem (1) in $\mathbb{R}^{2} \times [0, T]$ that belongs to $X_{r, T}$.
\end{proof}

\begin{Remark}\label{Rem:localExt}
    \emph{
    The local existence result in Proposition \ref{prop:local} can be easily extended to any Cauchy problem of the form
    \begin{equation}
        \begin{dcases}
            \partial_t u_i(x,t)=\varepsilon\Delta u_i + \mathrm{div}(u_i \nabla V_i[u_1,\ldots,u_N])+g_i(u_1,\ldots,u_N), & x\in \R^d\,,\,\, t>0\\
            u_i(x,0)=u_i^0(x), &
        \end{dcases}
    \end{equation}
    where $i=1,\ldots,N$, with nonlocal operators of the form
    \[V_i[u_1,\ldots,u_N](x)=\sum_{j=1}^N W_{ij}\ast u_i(x)\,,\]
    provided the interaction kernels $W_{ij}$ satisfy the same assumptions as in \eqref{eq:ass_reg_W}, the functions $g_i:\R^N\rightarrow \R$ feature $C^1$ regularity, and $u_i^0\in L^1\cap L^\infty$. The space dimension $d$ is arbitrary. A short version of the proof is provided in the appendix.
    }
\end{Remark}

\subsection{Non-negativity of local solutions}\label{subsec:positivity}
The solution provided in Subsection \ref{subsec:local} may, in principle, violate the conditions $S(x,t)\geq0$, $I(x,t)\geq 0$, $R(x,t)\geq 0$. This would make them inconsistent with applications to epidemiology. Hence, assuming the initial conditions satisfy
\begin{equation}\label{eq:initial_nonneg}
    S_0(x)\geq 0\,,\qquad I_0(x)\geq 0\,,\qquad R_0(x)\geq 0\,,
\end{equation}
we shall prove that the three functions $S, I, R$ are nonnegative throughout their whole existence lifespan. For future use, we consider a family of functions $\eta_{\delta}:\mathbb{R} \rightarrow [0,+\infty)$ parametrised by $\delta>0$ satisfying
\begin{equation}\label{etaC2}
    \eta_{\delta} \in C^{2}\left(\mathbb{R}\right),
\end{equation}
\begin{equation}\label{etaprime2pos}
    \eta^{\prime\prime} _{\delta}\geq 0,
\end{equation}
\begin{equation}\label{eq:etaprime}
    \eta_\delta'(x)=0\qquad \hbox{for all $x\geq 0$},
\end{equation}
\begin{equation}\label{etaCon}
    \eta_{\delta}(x) \rightarrow (x)_{-} \quad\text{uniformly on $\R$ as}\quad \delta \rightarrow 0,
\end{equation}
\begin{equation}\label{etaprimeIneq}
    \left|\eta^{\prime}_{\delta}(x) x\right| \leq (x)_-.
\end{equation}
We also introduce the sequence of cut-off functions $\zeta_{n}: \mathbb{R}^{2} \rightarrow[0,1]$ such that, for any $n \in \mathbb{N}$, $\zeta_{n} \in C^{2}(\mathbb{R})$ and
\begin{align}
 & \zeta_{n}(x)= \begin{cases}1, & x \in B_{n}(0) \label{eq:cutoff}\\
0, & x \notin B_{n+1}(0)\end{cases} \\
& \left|\nabla \zeta_{n}\right|+\left|\Delta \zeta_{n}\right| \leq C \left|\zeta_n\right|.\label{eq:cutoff_est}
\end{align}
for some constant $C$ independent of $n$.

\begin{Lemma}[Non-negativity of solutions]\label{lem:nonneg}
    Under the same assumptions of Proposition \ref{prop:local}, assuming further \eqref{eq:initial_nonneg}, the local solution $(S,I,R)$ provided in Proposition \ref{prop:local} is nonnegative on its maximal existence time interval $[0,T]$.
\end{Lemma}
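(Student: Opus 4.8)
The plan is to run a Stampacchia-type argument on a convex, localized approximation of the negative parts, combining the family $\eta_\delta$ and the cut-offs $\zeta_n$ introduced above. I would set
\[
\Phi_{\delta,n}(t):=\int_{\R^2}\big(\eta_\delta(S)+\eta_\delta(I)+\eta_\delta(R)\big)\,\zeta_n\,dx,
\]
which is finite and differentiable in $t$ since $(S,I,R)$ is a classical solution on $[0,T]$ by Proposition \ref{prop:local} and $\zeta_n$ has compact support. Because $S_0,I_0,R_0\ge0$ and $\eta_\delta$ vanishes on $[0,+\infty)$ by \eqref{eq:etaprime} and \eqref{etaCon}, one has $\Phi_{\delta,n}(0)=0$ \emph{exactly}, for every $\delta,n$. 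The target is a Gronwall inequality $\Phi_{\delta,n}'(t)\le C\,\Phi_{\delta,n}(t)+\mathrm{Err}_{\delta,n}(t)$ with $C$ independent of $\delta,n$ and an error that vanishes as $\delta\to0$ for fixed $n$.

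First I would differentiate $\Phi_{\delta,n}$ and substitute \eqref{mainsystem}, treating the diffusion, transport and reaction contributions separately. For each diffusion term, integrating by parts and using $\eta_\delta''\ge0$, $\zeta_n\ge0$ produces the favourable sign $-\varepsilon\int\eta_\delta''(S)|\nabla S|^2\zeta_n\le0$, while the cut-off remainder is rewritten as $\varepsilon\int\eta_\delta(S)\Delta\zeta_n$ (using $\eta_\delta'(S)\nabla S=\nabla\eta_\delta(S)$) and bounded by $C\varepsilon\int\eta_\delta(S)\zeta_n$ via \eqref{eq:cutoff_est}. The recurring trick—here and for transport—is to always regroup spatial derivatives as $\nabla\eta_\delta(S)$, so that no uncontrolled factor of $\nabla S$ survives. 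For the transport terms, two integrations by parts move the derivatives onto $V_S$ and $\zeta_n$; the resulting integrands carry $\Delta V_S$, $\nabla V_S$, $\Delta\zeta_n$, $\nabla\zeta_n$, all bounded by $C$ through Lemma \ref{lem:nonlocal_etsimates} and \eqref{eq:cutoff_est}, together with the factor $\eta_\delta'(S)S$ satisfying $|\eta_\delta'(S)S|\le(S)_-$ by \eqref{etaprimeIneq}; hence each transport contribution is controlled by $C\int\big(\eta_\delta(S)+(S)_-\big)\zeta_n$.

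The heart of the matter is the reaction part, where the coupling between the three equations must be respected. Since \eqref{etaprime2pos} and \eqref{eq:etaprime} give $\eta_\delta'\le0$ and $\eta_\delta'(u)u\ge0$ for all $u$, the diagonal loss term $-\alpha\int\eta_\delta'(I)I\,\zeta_n\le0$ has a good sign, and the bilinear terms are estimated by $-\beta\int\eta_\delta'(S)SI\,\zeta_n\le\beta\|I\|_\infty\int(S)_-\zeta_n$ and $\beta\int\eta_\delta'(I)SI\,\zeta_n\le\beta\|S\|_\infty\int(I)_-\zeta_n$, using the $L^\infty$ bounds from $X_{r,T}$. The only genuinely delicate term is the source $\alpha I$ in the $R$-equation: writing $I=I_+-I_-$ and using $\eta_\delta'(R)\le0$, the part $\alpha\int\eta_\delta'(R)I_+\zeta_n\le0$ is favourable, while the rest is bounded by $\alpha\int|\eta_\delta'(R)|(I)_-\zeta_n\le\alpha\int(I)_-\zeta_n$, where $\|\eta_\delta'\|_\infty\le1$ follows from \eqref{etaprime2pos} and \eqref{etaCon}. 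This is exactly why the three functionals must be summed: the sign of $R$ is driven by that of $I$.

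Collecting all contributions and writing $(u)_-=\eta_\delta(u)+R_\delta(u)$ with $0\le R_\delta(u)\le\omega(\delta):=\sup_{x}|(x)_--\eta_\delta(x)|\to0$, I obtain $\Phi_{\delta,n}'(t)\le C\,\Phi_{\delta,n}(t)+C\,\omega(\delta)\,|B_{n+1}(0)|$. Gronwall together with $\Phi_{\delta,n}(0)=0$ gives $\Phi_{\delta,n}(t)\le C\,\omega(\delta)\,|B_{n+1}(0)|\,e^{CT}$. Fixing $n$ and letting $\delta\to0$—the left-hand side converging to $\int\big((S)_-+(I)_-+(R)_-\big)\zeta_n$ by uniform convergence of $\eta_\delta$ on the compact support of $\zeta_n$—forces $\int\big((S)_-+(I)_-+(R)_-\big)\zeta_n=0$ for every $n$; since $\zeta_n=1$ on $B_n(0)$ and $B_n(0)\uparrow\R^2$, letting $n\to\infty$ yields $(S)_-=(I)_-=(R)_-=0$ a.e., i.e.\ nonnegativity. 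I expect the main obstacle to be precisely the reaction coupling in the $R$-equation, which prevents treating the species separately and mandates the combined functional; a secondary point of care is keeping every spatial derivative grouped as $\nabla\eta_\delta(\cdot)$ so that the gradient terms either carry a good sign or fall onto the controlled potentials and cut-offs.
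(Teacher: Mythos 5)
Your proof is correct, and it reorganizes the paper's argument in a genuinely different way at precisely the point you single out as delicate. The paper runs the same $\eta_\delta$--$\zeta_n$ machinery (the same integrations by parts, the same appeal to Lemma \ref{lem:nonlocal_etsimates} and \eqref{eq:cutoff_est}, the same Gronwall closure), but it treats the compartments \emph{sequentially}: it first proves $S\geq 0$, then $I\geq 0$ (both of which, as in your estimates, need only $|\eta_\delta'(u)u|\leq (u)_-$ and $L^\infty$ bounds, not the sign of the other species), and only then handles $R$, where the already-established sign $I\geq 0$ makes the source contribution $\alpha\int \eta_\delta'(R)\, I\, \zeta_n\, dx\leq 0$ outright, so no splitting is needed. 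You instead sum the three functionals into $\Phi_{\delta,n}$ and absorb the coupling through the decomposition $I=I_+-I_-$ together with $\|\eta_\delta'\|_{L^\infty}\leq 1$, which buys a single Gronwall inequality for the combined quantity and dispenses with any ordering of the species; this variant would carry over verbatim to reaction structures in which the sources couple the compartments without a triangular ordering, where the paper's cascade is unavailable. A secondary bookkeeping difference: you keep Gronwall at fixed $(\delta,n)$ with the explicit error $C\,\omega(\delta)\,|B_{n+1}(0)|$ and pass to the limits afterwards, while the paper integrates in time and sends $\delta\to 0$ by dominated convergence \emph{before} invoking Gronwall; both are sound.

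Two small repairs. First, $\Phi_{\delta,n}(0)=0$ is not exact under \eqref{eq:etaprime} and \eqref{etaCon}: these only force $\eta_\delta\equiv\eta_\delta(0)$ on $[0,+\infty)$ with $\eta_\delta(0)\to 0$ as $\delta\to 0$, so either normalize the family so that $\eta_\delta(0)=0$, or note $\Phi_{\delta,n}(0)\leq 3\,\eta_\delta(0)\,|B_{n+1}(0)|\leq C\,\omega(\delta)\,|B_{n+1}(0)|$, which your $\delta\to 0$ step swallows anyway. Second, $\|\eta_\delta'\|_{L^\infty}\leq 1$ follows most directly from \eqref{etaprimeIneq}, which for $x<0$ reads $|\eta_\delta'(x)|\,|x|\leq |x|$, rather than from \eqref{etaprime2pos} and \eqref{etaCon}; your convexity-plus-uniform-convergence derivation does work, but it requires the extra observation that $\sup_x|\eta_\delta(x)-(x)_-|$ is finite for small $\delta$. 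Relatedly, the claim $0\leq R_\delta(u)$ in your decomposition $(u)_-=\eta_\delta(u)+R_\delta(u)$ is not guaranteed by the stated hypotheses, but only $|R_\delta(u)|\leq\omega(\delta)$ is ever used, so nothing breaks.
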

\begin{proof}
Let $(S,I,R)$ be the solution provided in Proposition \ref{prop:local} on the maximal existence time interval $[0,T]$. We omit the $(x,t)$-dependency for simplicity and compute
\begin{align}
    &  \frac{d}{d t} \int_{\mathbb{R}^{2}} \eta_{\delta}(S) \zeta_{n} d x=\int_{\mathbb{R}^{2}} \eta_{\delta}^{\prime}(S) S_{t}\zeta_{n} d x=\int_{\mathbb{R}^{2}} \eta_{\delta}^{\prime}(S)\left(\operatorname{div}\left(S \nabla V_{S}\right)+\varepsilon \Delta S-\beta S I\right) \zeta_{n} d x\nonumber\\
 & \ = \int _{\mathbb{R}} \eta_{\delta} ^{\prime} \left(S\right)\nabla S\cdot\nabla V_{S}[S,I,R] \zeta_{n}dx+ \int_{\mathbb{R}^2}\eta_{\delta}^{\prime}\left(S\right)S\Delta V_S[S,I,R]\zeta_n dx - \beta\int_{\mathbb{R}^2} \eta_{\delta}^{\prime}\left(S\right)SI\zeta_{n}dx\nonumber\\
 & \ -\varepsilon\int_{\mathbb{R}^2}\eta_{\delta}^{\prime\prime}\left(S\right)\left|\nabla S\right|^2 \zeta_n dx-\varepsilon\int_{\mathbb{R}^2}\eta_{\delta}^{\prime} \left(S\right)\nabla S \cdot \nabla\zeta_n dx\,,\label{eq:est_pos_1}
\end{align}
where we have used integration by parts on the terms involving the linear diffusion term using that the functions under the integral sign are compactly supported. Using \eqref{etaprime2pos} and the fact that $\zeta_n$ is a non-negative function we have
$$
\begin{gathered}
    -\int_{\mathbb{R}^2}\varepsilon\eta _{\delta}^{\prime\prime} (S)\left|\nabla S\right|^2 \zeta_n(x)\leq 0.\\
\end{gathered}
$$
Moreover,
% $$
%  \eta_{\delta}^{\prime}(S) \nabla S=\nabla(\eta_{\delta}(S)),$$
% which implies
$$
 -\int _{\mathbb{R}^2} \varepsilon \eta_{\delta}^{\prime}(S) \nabla S \cdot \nabla \zeta_n dx =\int _{\mathbb{R}^2} \varepsilon \eta_{\delta}(S) \Delta\zeta_n dx.
$$
We also have
\begin{align*}
    & \int_{\mathbb{R}^{2}} \eta_{\delta}^{\prime}\left(S\right) \nabla S \cdot \nabla V_{S}[S, I, R] \zeta_{n} d x=\int_{\mathbb{R}^{2}} \nabla\left(\eta_{\delta}(S)\right)\cdot \nabla V_{S}[S, I, R] \zeta_{n} d x \\
 & \ =-\int_{\mathbb{R}^{2}} \eta_{\delta}(S) \Delta V_{S}[S, I, R] \zeta_{n} d x-\int_{\mathbb{R}^{2}} \eta_{\delta}(S) \nabla V_{S}[S, I, R]\cdot \nabla \zeta_{n} d x .
\end{align*}
Combining the above computations with \eqref{eq:est_pos_1}, recalling that $S,I$ and $R$ belong to $L^1\cap L^\infty$ on $t\in [0,T]$, we obtain the estimate
%\textcolor{red}{Below we can send $n$ to infinity before using Gronwall, with no need to estimate derivatives of the cutoff by the cutoff itself, because all integrals can be shown to be finite due to the fact that solutions belong to $L^1\cap L^\infty$.}
\begin{align*}
    & \frac{d}{d t} \int_{\mathbb{R}^{2}} \eta_{\delta}(S) \zeta_{n} d x
    \leq
 %    -\int_{\mathbb{R}^{2}} \eta_{\delta}(S) \Delta V_{S}[S, I, R] \zeta_{n} d x\\
 % & \ -\int_{\mathbb{R}^{2}} \eta_{\delta}(S) \nabla V_{S}[S, I, R] .\nabla \zeta_{n} d x +\varepsilon\int_{\mathbb{R}^{2}} \eta_{\delta}(S) \Delta \zeta_{n}d x \\
 % & \ +\int_{\mathbb{R}^{2}} \eta^{\prime}_{\delta}(S) S \Delta V_{S}[S, I, R] \zeta_{n} d x-\beta \int_{\mathbb{R}^{2}} \eta_{\delta}^{\prime}(S) S I \zeta_{n}d x\\
 % & \ \leq
 \left\|\Delta V_{S}[S, I, R]\right\|_{L^{\infty}} \int_{\mathbb{R}^{2}} \eta_{\delta}(S)  d x + C\left\|\nabla V_{S}[S, I, R]\right\|_{L^{\infty}} \int_{\mathbb{R}^{2}} \eta_{\delta}(S)  d x \\
 & \ +C\varepsilon \int_{\mathbb{R}^{2}} \eta_{\delta}(S)   d x+\beta\|I\|_{L^{\infty}} \int_{\mathbb{R}^{2}}\left|\eta_{\delta}^{\prime}(S) S\right| d x\\
 & \ +\left\|\Delta V[S, I, R]\right\|_{L^{\infty}} \int_{\mathbb{R}^{2}} \left|\eta_{\delta}^{\prime}(S) S \right|  d x  \leq C \int_{\mathbb{R}^{2}} \eta_{\delta}(S) d x+C\int_{\mathbb{R}^{2}} (S)_{-} dx,
\end{align*}
for a suitable constant $C>0$, where we have used once again \eqref{eq:est_kernels} with $p=+\infty$.
% \[
% \|\Delta V_s[S,I,R]\|_{L^\infty}+\|\nabla V_s[S,I,R]\|_{L^\infty}\leq C\left(\|S\|_{L^\infty} + \|I\|_{L^\infty} +\|R\|_{L^\infty} \right)
% \]
% for a suitable constant $C$ due to Young's convolution inequality and the regularity of the interaction kernels, and the fact that the solution $(S,I,R)$ is bounded on $[0,T]$.
We now let $\delta \rightarrow 0$. To do that, we first integrate the above estimate in time to get
\begin{align*}
    & \int_{\mathbb{R}^{2}} \eta_{\delta}(S(x,t)) \zeta_{n}(x) d x \leq \int_{\mathbb{R}^{2}} \eta_{\delta}(S_0(x)) \zeta_{n}(x) d x \\
    & \ + C\int_0^t  \int_{\mathbb{R}^{2}} \eta_{\delta}(S(x,s) d x ds + C \int_0^t \int_{\mathbb{R}^{2}} (S(x,s))_{-} dx ds\,.
\end{align*}
We now use dominated convergence theorem to get
\begin{align*}
    & \int_{\mathbb{R}^{2}} (S(x,t))_- \zeta_{n}(x) d x \leq \int_{\mathbb{R}^{2}} (S_0(x))_- \zeta_{n}(x) d x  + C\int_0^t  \int_{\mathbb{R}^{2}} (S(x,s))_- d x ds\,.
\end{align*}
for a suitable $C\geq 0$. We now let $n \rightarrow \infty$ and obtain
$$
\int_{\mathbb{R}^{2}}(S(x, t))_{-} d x \leq \int_{\mathbb{R}^{2}}(S(x,0))_{-} d x +C \int_0 ^t \int_{\mathbb R^2} (S(x,s))_{-}dx ds.
$$
Grönwall's lemma implies
$$
\int_{\mathbb{R}^{2}}(S(x, t))_{-} d x \leq  e^{C t} \int_{\mathbb{R}^{2}}\left(S_{0}\right)_{-} d x = 0,
$$
since $S_0$ is non-negative. The above implies $S(x,t)$ is nonnegative almost everywhere (and, by regularity, everywhere) on $\R^2\times [0,T]$.
We now apply the same method for $I(x,t)$. We estimate
\begin{align*}
    & \frac{d}{d t} \int_{\mathbb{R}^{2}} \eta_{\delta}(I) \zeta_{n} d x =
%     \int_{\mathbb{R}^{2}} \eta_{\delta}^{\prime}(I) I_{t}\zeta_{n} d x\\
% & \ =
\int_{\mathbb{R}^{2}} \eta_{\delta}^{\prime}(I)\left(\operatorname{div}\left(I \nabla V_{I}[S,I,R]\right)+\varepsilon \Delta I+\beta S I-\alpha I\right) \zeta_{n} d x \\
& \ =  \int_{\mathbb{R}^{2}} \eta_{\delta}^{\prime}(I) \nabla I \cdot \nabla V_{I}[S, I, R] \zeta_{n} d x+\alpha\int_{\mathbb{R}^{2}}  \eta_{\delta}^{\prime}(I) I \zeta_{n} d x\\
& \ +\int_{\mathbb{R}^{2}} \eta_{\delta}^{\prime}(I) I \Delta V_{I}[S, I, R] \zeta_{n} d x + \beta\int_{\mathbb{R}^{2}} \eta_{\delta}^{\prime}(I) S I \zeta_{n} d x \\
 & \ -\varepsilon\int_{\mathbb{R}^{2}}  \eta_{\delta}^{\prime \prime}(I)|\nabla I|^{2} \zeta_{n} d x-\varepsilon \int_{\mathbb{R}^{2}} \eta_{\delta}^{\prime}(I) \nabla I \cdot \nabla  \zeta_{n}d x \,.
\end{align*}
Similarly to the case of $S(x,t)$, we obtain
\begin{align*}
    & \frac{d}{d t} \int_{\mathbb{R}^{2}} \eta_{\delta}(I) \zeta_{n}(x) d x\\
 & \ \leq\left\|\Delta V_{I}[S, I, R]\right\|_{L^{\infty}} \int_{\mathbb{R}^{2}} \eta_{\delta}(I)  d x+ C \left\|\nabla V_{I}[S, I, R]\right\|_{L^{\infty}} \int_{\mathbb{R}^{2}} \eta_{\delta}(I)  d x\\
 & \ +\alpha \int_{\mathbb{R}^{2}}\left|\eta_{\delta}^{\prime}(I) I\right|  d x  +\varepsilon C \int_{\mathbb{R}^{2}} \eta_{\delta}(I) d x\\
 & \ +\beta\|S\|_{L^{\infty}} \int_{\mathbb{R}^{2}}\left|\eta_{\delta}^{\prime}(I) I\right| d x+\|\Delta V_{I}[S, I, R]\|_{L^{\infty}} \int_{\mathbb{R}^{2}}\left|\eta_{\delta}^{\prime}(I) I\right|  d x \\
 & \ \leq C\int_{\mathbb{R}^2} \eta_{\delta}(I) dx +C\int_{\mathbb{R}^2} \left(I\right)_{-} dx.
\end{align*}
for a suitable constant $C$. From the above estimate we may easily conclude the nonnegativity of $I$ as in the estimate for $S$. The same method also applies for $R$, where we may benefit of the fact that $I\geq 0$, therefore, the only reaction term contributes to maintain the non-negative sign of the initial condition. The details are left to the reader.
\end{proof}

We prove at this stage an important property of the solution on the maximal existence time interval $[0,T]$.

\begin{Lemma}[Conservation of the total mass]\label{lem:conservation}
    Under the assumptions of Proposition \ref{prop:local}, given
    \[M(t):=\int_{\R^2}\left(S(x,t)+I(x,t)+R(x,t)\right) dx\,,\]
    the function $[0,T]\ni t\mapsto M(t)$ is constant.
\end{Lemma}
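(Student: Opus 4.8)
The plan is to sum the three equations in \eqref{mainsystem} and exploit the exact cancellation of the reaction terms. Setting $\rho:=S+I+R$ and adding the three lines, the source terms combine as $-\beta SI+(\beta SI-\alpha I)+\alpha I=0$, so that $\rho$ solves the purely conservative equation
\[
\partial_t\rho=\operatorname{div}\big(S\nabla V_S[S,I,R]+I\nabla V_I[S,I,R]+R\nabla V_R[S,I,R]\big)+\varepsilon\Delta\rho\,.
\]
Because the right-hand side is in divergence form, integrating over $\R^2$ formally yields $\tfrac{d}{dt}M(t)=0$; the only genuine work is to justify that no mass escapes at spatial infinity. I would do this by testing against the cut-off functions $\zeta_n$ of \eqref{eq:cutoff}--\eqref{eq:cutoff_est} and letting $n\to\infty$.

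Concretely, multiplying the equation for $\rho$ by $\zeta_n$, integrating over $\R^2$, and integrating by parts (once on the transport term, twice on the diffusion term), I obtain
\[
\frac{d}{dt}\int_{\R^2}\rho\,\zeta_n\,dx=-\int_{\R^2}\big(S\nabla V_S+I\nabla V_I+R\nabla V_R\big)\cdot\nabla\zeta_n\,dx+\varepsilon\int_{\R^2}\rho\,\Delta\zeta_n\,dx\,,
\]
which, since $(S,I,R)$ is a classical solution with compactly supported integrands, I then integrate in time on $[0,t]$. By \eqref{eq:cutoff} both terms on the right are supported in the annulus $B_{n+1}(0)\setminus B_n(0)$, where by \eqref{eq:cutoff_est} one has $|\nabla\zeta_n|+|\Delta\zeta_n|\le C$. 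Combining this with the pointwise estimate $|S\nabla V_S+I\nabla V_I+R\nabla V_R|\le C(|S|+|I|+|R|)$, which follows from the $L^\infty$ bounds \eqref{eq:est_kernels} on the velocity fields together with the uniform control of the $L^1$ norms of $S,I,R$ on $[0,T]$ from Proposition \ref{prop:local}, both spatial integrals are dominated by $C\int_{B_{n+1}(0)\setminus B_n(0)}(|S|+|I|+|R|)\,dx$.

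Finally I would pass to the limit $n\to\infty$. Since $S(\cdot,s),I(\cdot,s),R(\cdot,s)\in L^1(\R^2)$ for each $s$, the tail integral $\int_{B_{n+1}(0)\setminus B_n(0)}(|S|+|I|+|R|)\,dx\to 0$, so the right-hand side vanishes pointwise in $s$; being dominated uniformly in $n$ by $C\int_{\R^2}(|S|+|I|+|R|)\,dx$, which is bounded on $[0,T]$, the time integral tends to zero by dominated convergence. On the left, $\rho\,\zeta_n\to\rho$ pointwise with $|\rho\,\zeta_n|\le|S|+|I|+|R|\in L^1$, so dominated convergence gives $\int_{\R^2}\rho(\cdot,t)\zeta_n\,dx\to M(t)$ and likewise at $t=0$. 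Hence $M(t)-M(0)=0$ for every $t\in[0,T]$, which is the claim. I expect the sole delicate point to be this control of the tails at spatial infinity: the cancellation of the reaction terms is immediate, and the remainder is a routine cut-off and dominated-convergence argument.
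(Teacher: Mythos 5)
Your proof is correct and takes essentially the same route as the paper: sum the three equations so the reaction terms cancel, test with the cut-offs $\zeta_n$, integrate by parts and in time, and pass to the limit $n\to\infty$ by dominated convergence, using the $L^\infty$ bounds on $\nabla V_\xi$ from Lemma \ref{lem:nonlocal_etsimates} together with the $L^1\cap L^\infty$ control of $(S,I,R)$ from Proposition \ref{prop:local}. Your justification of the limit via the annulus support of $\nabla\zeta_n,\Delta\zeta_n$ and the vanishing $L^1$ tails is merely a more explicit rendering of the paper's appeal to $\nabla\zeta_n,\Delta\zeta_n\to 0$ almost everywhere plus dominated convergence.
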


\begin{proof}
    Given the cutoff $\zeta_n$ defined in \eqref{eq:cutoff}, we compute
    \begin{align*}
        & \frac{d}{dt}\int_{\R^2}\left(S+I+R\right) \zeta_ndx\\
        & \ =\varepsilon\int_{\R^2}\Delta(S+I+R)\zeta_n dx+\int_{\R^2}\mathrm{div}(S\nabla V_S[S,I,R] + I\nabla V_I([S,I,R]) + R\nabla V_R[S,I,R])\zeta_n dx \\
        & \ = \varepsilon \int_{\R^2}(S+I+R)\Delta \zeta_n dx -\int_{\R^2}\left(S\nabla V_S[S,I,R] + I\nabla V_I([S,I,R]) + R\nabla V_R[S,I,R]\right)\cdot \nabla \zeta_n dx\,.
    \end{align*}
    We integrate the above identity in time on $\tau\in [0,t]$ for $t\in [0,T)$ and obtain
    \begin{align*}
        & \int_{\R^2}\left(S+I+R\right) \zeta_ndx=\int_{\R^2}\left(S_0+I_0+R_0\right) \zeta_ndx + \int_0^t \left[\varepsilon \int_{\R^2}(S+I+R)\Delta \zeta_n dx \right.\\
        & \ \left.-\int_{\R^2}\left(S\nabla V_S[S,I,R] + I\nabla V_I([S,I,R]) + R\nabla V_R[S,I,R]\right)\cdot \nabla \zeta_n dx\right]d\tau\,.
    \end{align*}
    Now, since the functions $(S+I+R)$ and $\left(S\nabla V_S[S,I,R] + I\nabla V_I([S,I,R]) + R\nabla V_R[S,I,R]\right)$ are uniformly bounded with respect to $n$ in $L^1\cap L^\infty$ on $\R^2\times [0,t]$ due to Proposition \ref{prop:local} and to Lemma \ref{lem:nonlocal_etsimates} applied to the cases $p=1$ and $p=+\infty$, we may send $n\rightarrow+\infty$ and use that $\Delta \zeta_n$ and $\nabla \zeta_n$ converge to zero almost everywhere on $\R^2$ to obtain the desired assertion via Lebesgue dominated convergence.
\end{proof}

\subsection{Global existence for the approximated problem}\label{subsec:global}
% \begin{Theorem}
% Consider the system \eqref{mainsystem}. Let the kernels appearing in this system belong to $W^{2, \infty}(\mathbb R ^2)$ and the initial conditions $S_{0}, I_{0}, R_{0} \in\left(L^{1} \cap L^{\infty}\left(\mathbb{R}^{2}\right)\right)^{3}$ are positive functions. Then the solution to the system \eqref{mainsystem} is a global solution that solves this system in $\mathbb R ^2 \times [0,\infty)$
% \end{Theorem}
In this section, we show that the local, nonnegative solution found in the previous subsections is, in fact, a global solution. To perform this task, we will show that the $L^p$ norms of the solution $(S,I,R)$ do not blow up on a bounded maximal existence time interval $[0,T]$ for all $p\in [1,+\infty]$. This will be achieved via direct $L^p$ estimates on the solution, which are justified by the fact that our solution is classical and we may therefore suitably differentiate it and we may integrate by parts in the estimates below. Global existence then follows from a standard continuation principle. We emphasise that the estimates we obtain here are uniform with respect to $\varepsilon>0$. In particular, all generic constants $C$ in this subsection are independent of $\varepsilon$.

\begin{Proposition}[Uniform $L^p$ estimates and global existence]\label{propLp}
Let $(S,I,R)$ be the unique local solution to the system \eqref{mainsystem} found in Proposition \ref{prop:local} and let $p\in [1,+\infty]$. Under the same assumptions of Proposition \ref{prop:local}, assuming further $S_{0}, I_{0}, R_{0} \in\left(L^{1} \cap L^{\infty}\left(\mathbb{R}^{2}\right)\right)$ are nonnegative functions, then, there exists a constant $C\geq 0$ independent of time and of $\varepsilon$ such that
\begin{align}
   &  \|S(\cdot,t)\|_{L^p(\R^2)}+ \|I(\cdot,t)\|_{L^p(\R^2)}+ \|R(\cdot,t)\|_{L^p(\R^2)}\nonumber\\
   & \ \leq \left(1+\|S_0\|_{L^p(\R^2)}+\|I_0\|_{L^p(\R^2)}+\|R_0\|_{L^p(\R^2)}\right) e^{Ct(1+e^{Ct})},\label{eq:Lp}
\end{align}
for all $t\in [0,T]$. Consequently, the solution $(S,I,R)$ found in Proposition \ref{prop:local} exists globally in time.
\end{Proposition}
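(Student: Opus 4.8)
The plan is to derive closed differential inequalities for the $L^p$ norms of $S$, $I$, $R$, and then chain two applications of Grönwall's lemma to produce the double-exponential bound in \eqref{eq:Lp}. The key structural observation is that the equations are coupled in a triangular fashion through the reaction terms: the $S$-equation has a \emph{favorable} sign ($-\beta SI$ with $S,I\geq 0$), the $I$-equation borrows a $+\beta SI$ that must be controlled by $\|S\|_{L^\infty}$, and the $R$-equation only sees the benign $+\alpha I$ source. Throughout I would exploit nonnegativity (Lemma \ref{lem:nonneg}), mass conservation (Lemma \ref{lem:conservation}), and the nonlocal estimates of Lemma \ref{lem:nonlocal_etsimates}, stressing that every constant is $\varepsilon$-independent.

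**Step 1 (the $L^1$ bound).** Since the solution is nonnegative, Lemma \ref{lem:conservation} immediately gives $\|S\|_{L^1}+\|I\|_{L^1}+\|R\|_{L^1}=M(0)$ for all $t$, so the $p=1$ case of \eqref{eq:Lp} holds with a constant depending only on the initial mass. This is the crucial input that unlocks the nonlocal estimates: by \eqref{eq:est_kernels} with $p=1$, all the drift velocities $\nabla V_\xi$ and their derivatives $\Delta V_\xi$ are bounded in $L^\infty$ uniformly in time by $CM(0)$.

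**Step 2 (the $L^p$ estimate for finite $p$).** For $1<p<\infty$ I would multiply each equation by $p\,S^{p-1}$ (resp.\ $I^{p-1}$, $R^{p-1}$) and integrate over $\R^2$, which is justified since the solution is classical. The diffusion term $\varepsilon\Delta S$ contributes $-\varepsilon p(p-1)\int S^{p-2}|\nabla S|^2\leq 0$ and is discarded (this is what keeps the bound $\varepsilon$-independent). The transport term integrates by parts to
\begin{align*}
\int p\,S^{p-1}\,\mathrm{div}(S\nabla V_S)\,dx
&=-p(p-1)\int S^{p-1}\nabla S\cdot\nabla V_S\,dx\\
&=(p-1)\int S^{p}\,\Delta V_S\,dx
\leq (p-1)\,\|\Delta V_S\|_{L^\infty}\,\|S\|_{L^p}^{p},
\end{align*}
so each transport term is controlled by $C\|S\|_{L^p}^p$ with $C$ proportional to $M(0)$ via Step 1. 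The reaction terms are handled by sign and by Young: in the $S$-equation $-\beta p\int S^{p-1}\cdot SI\leq 0$ is dropped; in the $I$-equation $\beta p\int I^{p-1}SI\leq \beta p\,\|S\|_{L^\infty}\,\|I\|_{L^p}^{p}$ and $-\alpha p\int I^p\leq 0$; in the $R$-equation $\alpha p\int R^{p-1}I\leq \alpha p\,\|R\|_{L^p}^{p-1}\|I\|_{L^p}\leq C(\|R\|_{L^p}^p+\|I\|_{L^p}^p)$. Writing $y(t)=\|S\|_{L^p}^p+\|I\|_{L^p}^p+\|R\|_{L^p}^p$ and summing, the only term not yet bounded by $Cy$ is the coefficient $\|S\|_{L^\infty}$ multiplying $\|I\|_{L^p}^p$.

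**Step 3 (closing the $L^\infty$ feedback and the double exponential).** This is where I expect the main obstacle to lie: the $\|S\|_{L^\infty}$ factor in the $I$-estimate couples the finite-$p$ system to the top norm. My plan is to first obtain $\|S\|_{L^\infty}$ \emph{independently} by running the same computation on the $S$-equation alone, where the reaction sign is favorable, to get $\frac{d}{dt}\|S\|_{L^p}^p\le C\,\|S\|_{L^p}^p$; letting $p\to\infty$ yields $\|S(\cdot,t)\|_{L^\infty}\le \|S_0\|_{L^\infty}e^{Ct}$, a single-exponential bound for $S$ alone. Feeding this back, the differential inequality for $y$ becomes $y'\le C(1+\|S\|_{L^\infty})y\le C(1+e^{Ct})y$, and Grönwall gives $y(t)\le y(0)\exp\!\big(\int_0^t C(1+e^{Cs})\,ds\big)\le y(0)\,e^{Ct(1+e^{Ct})}$, which is exactly the form of \eqref{eq:Lp} after taking $p$-th roots and absorbing constants. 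The $p=\infty$ case follows by passing to the limit $p\to\infty$ in the resulting bound, or equivalently by an $L^\infty$ comparison/maximum-principle argument on each equation using the already-controlled drift. Finally, since \eqref{eq:Lp} shows no $L^p$ norm can blow up on any finite interval, the standard continuation principle (the local time $T$ in Proposition \ref{prop:local} depends only on the $L^1\cap L^\infty$ norm of the data) upgrades the local solution to a global one, completing the proof.
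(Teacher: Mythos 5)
Your proposal is correct and follows essentially the same route as the paper's proof: drop the diffusion term by sign, control the transport term through $\|\Delta V_S\|_{L^\infty}$ via Lemma \ref{lem:nonlocal_etsimates} with $p=1$ and mass conservation, exploit the favorable sign of $-\beta SI$ to get the single-exponential bound $\|S(\cdot,t)\|_{L^\infty}\leq \|S_0\|_{L^\infty}e^{Ct}$ by letting $p\to\infty$, feed this into the $I$-equation to produce the double exponential via Grönwall, handle $R$ with Young's inequality, and conclude global existence by continuation. The only minor difference is procedural: the paper justifies the integrations by parts with the cut-off functions $\zeta_n$ of \eqref{eq:cutoff} before passing to the limit $n\to\infty$ by monotone convergence (classical regularity alone does not provide the decay at infinity you implicitly invoke), and it runs the three Grönwall arguments sequentially for $S$, $I$, $R$ rather than summing them into a single quantity $y(t)$, but neither change affects the substance of the argument.
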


\begin{proof}
\textbf{Step 1 - $L^p$ estimate of $S$ for $p\in [1,+\infty]$.}
Consider the cut-off function $\zeta_n$ defined in \eqref{eq:cutoff}. By $L^p$-interpolation inequality, the initial data $S_0,I_0,R_0$ are in $L^p (\mathbb R^2)$ for any $p\in [1,\infty]$. For finite $p$, we integrate by parts and compute
\begin{align*}
    &  \frac{d}{d t} \int_{\mathbb{R}^{2}} S^{p} \zeta_{n} d x=p\int_{\mathbb{R}^{2}} S^{p-1} S_{t} \zeta_{n}d x=p \int_{\mathbb{R}^{2}} S^{p-1}\left(\varepsilon \Delta S+\operatorname{div}\left(S \nabla V_{S}[S, I, R]\right)-\beta S I\right) \zeta_{n}d x \\
& \ =-p(p-1) \varepsilon \int_{\mathbb{R}^{2}} S^{p-2}|\nabla S|^{2} \zeta_{n} d x-p \varepsilon \int_{\mathbb{R}^{2}}S^{p-1} \nabla S \cdot \nabla \zeta_{n} d x\\
& \ -p(p-1) \int_{\mathbb{R}^{2}} S^{p-1} \nabla S \cdot\nabla V_{S}[S, I, R] \zeta_{n} d x -p \int_{\mathbb{R}^{2}} S^{p} \nabla V_{S}[S, I, R]\cdot \nabla \zeta_{n}d x-\beta p\int_{\mathbb{R}^{2}} S^{p} I \zeta_{n} d x\,.
\end{align*}
Since the functions $S,I$ and $\zeta_n$ are nonnegative, we have
\begin{align*}
    & -p(p-1) \varepsilon \int_{\mathbb{R}^{2}} S^{p-2}|\nabla S|^{2} \zeta_{n}(x) d x\leq 0,\\
& \ -\beta p\int_{\mathbb{R}^{2}} S^{p} I \zeta_{n}(x) d x\leq 0.
\end{align*}
Also, $p S^{p-1} \nabla S = \nabla(S^p)$, so using the integration by parts we have
\begin{align*}
    & -p(p-1) \int_{\mathbb{R}^{2}} S^{p-1} \nabla S \cdot\nabla V_{S}[S, I, R] \zeta_{n}(x) d x\\
& \ = (p-1) \int_{\mathbb{R}^2}S^p \Delta V_S[S,I,R]\zeta_n(x) dx + (p-1)\int_{\mathbb{R}^2} S^p \nabla V_S[S,I,R]\cdot \nabla \zeta_n(x)dx
\end{align*}
and
\begin{align*}
    & -p \varepsilon \int_{\mathbb{R}^{2}}S^{p-1} \nabla S \cdot \nabla \zeta_{n}(x) d x =\varepsilon\int_{\mathbb{R}^2} S^p \Delta \zeta_n(x)dx.
\end{align*}
% Same as in the previous section, we have the assumption that the functions $S,I$ and $R$ are in $L^1 \cup L^{\infty}$, using the interpolation inequality for the $L^p$ spaces, we can see that these functions are also in $L^p$ for any $1\leq p\leq \infty$. So the integrals formed bellow are all bounded.
%\textcolor{red}{Same comments as above for the positivity also applied here because $S$ is in all $L^p$ (by $L^1$-$L^\infty$ interpolation) in the local-existence time interval.}
We then use \eqref{eq:cutoff_est} and obtain, for a suitable constant $C\geq 0$, the estimate
\begin{align*}
    & \frac{d}{d t} \int_{\mathbb{R}^{2}} S^{p} \zeta_{n}(x) d x \leq  \varepsilon \int_{\mathbb{R}^{2}} S^{p} \Delta \zeta_{n}(x) d x\\
 & \ +(p-1) \int_{\mathbb{R}^{2}} S^{p} \Delta V_{S}[S, I, R] \zeta_{n}(x) d x-\int_{\mathbb{R}^{2}} S^{p} \nabla V_{S}[S, I, R] \nabla \zeta_{n}(x) d x \\
 & \ \leq \varepsilon C \|S\|_{L^{p}}^{p}+(p-1)\left\|\Delta V_{S}[S, I, R]\right\|_{L^{\infty}}\left\| S\right\|_{L^{p}}^{p}\\
 & \ +C\left\| \nabla V_{S}[S, I, R]\right\|_{L^{\infty}}\left\| S \right\|_{L^{p}}^{p} \\
 & \ \leq \left\| S \right\|_{L^{p}}^{p}\left(\varepsilon C + (p-1)\left\|\Delta V_{S}[S, I, R]\right\|_{L^{\infty}}+C\left\| \nabla V_{S}[S, I, R]\right\|_{L^{\infty}}\right)\,.
\end{align*}
Using \eqref{eq:est_kernels} with $p=1$, we may find a constant $C\geq 0$ independent of $T$ and of $\varepsilon$ such that
 $$
 \left \|\nabla V_S[S,I,R]\right\|_{L^{\infty}(\mathbb{R}^2)}+\left \|\Delta V_S[S,I,R]\right\|_{L^{\infty}(\mathbb{R}^2)}\leq C\left(\left\|S\right\|_{L^1(\mathbb{R}^2)}+\left\|I\right\|_{L^1(\mathbb{R}^2)}+\left\|R\right\|_{L^1(\mathbb{R}^2)}\right)\,.
 $$
Now, since $S$, $I$, and $R$ are nonnegative on their existence lifespan due to Lemma \ref{lem:nonneg}, we have
\[\left(\left\|S\right\|_{L^1(\mathbb{R}^2)}+\left\|I\right\|_{L^1(\mathbb{R}^2)}+\left\|R\right\|_{L^1(\mathbb{R}^2)}\right) = \int_{\R^2}(S(x,t)+I(x.t)+R(x,t))dx\,.\]
Lemma \ref{lem:conservation} implies the above quantity is constant in time and equal to
\[M_0=\int_{\R^2}(S_0+I_0+R_0)dx\,.\]
Hence, we may easily find a constant $C\geq 0$ depending on the interaction kernels and on $M_0$ such that
 $$
\frac{d}{d t} \int_{\mathbb{R}^{2}} S^{p} \zeta_{n}(x) d x \leq C \int_{\mathbb{R}^{2}} S^{p} d x\,.
$$
% or equivalently
%  $$
%   \int_{\mathbb{R}^{2}} S^{p}(x,t) \zeta_{n}(x) d x - \int_{\mathbb{R}^{2}} S^{p}(x,0) \zeta_{n}(x) d x\leq C \int_0 ^t \int_{\mathbb{R}^{2}} S^{p} d x dt
% $$
% Let $n \rightarrow \infty$. By the Monotone Convergence theorem, we have that
% $$
%  \int_{\mathbb{R}^{2}} S^{p}(x,t) d x \leq \int_{\mathbb{R}^{2}} S_0^{p} d x + C \int_0^t \int_{\mathbb{R}^2} S^p d x dt.
% $$
% Apply Grönwall's lemma for integral
Integrating in time, letting $n\rightarrow +\infty$ and via Gronwall's inequality we obtain
\begin{equation}\label{eq:S_Lp}
    \|S\|_{L^{p}\left(\mathbb{R}^{2}\right)} \le e^{Ct}\left\|S_{0}\right\|_{L^{p}\left(\mathbb{R}^{2}\right)},
\end{equation}
for all $t\in [0,T]$, where $C$ is a constant and depends on the interaction kernels, the initial total mass $M_0$ and constant coefficients $\alpha,\beta$. In order to obtain the estimate of $S$ in $L^\infty$, we use \eqref{eq:S_Lp} as follows:
\begin{align}
    &  \|S(\cdot,t)\|_{L^{\infty}}\leq \limsup_{p \nearrow+\infty}\|S(\cdot,)\|_{L^{p}} \leq \limsup_{p \nearrow +\infty} e^{C t} \| S_{0} \|_{L^{p}}\leq \|S_0\|_{L^\infty}e^{Ct}\,,\label{eq:S_L_inf}
\end{align}
where the first inequality above is standard and the
last one is obtained via standard $L^p$ interpolation.

\noindent\textbf{Step 2 - $L^p$ estimate of $I$ for $p\in [1,+\infty]$.}
Let us now estimate the $L^p$ norm of $I$ for finite $p$. Similarly as for $S$, we compute
\begin{align*}
    & \frac{d}{d t} \int_{\mathbb{R}^{2}} I^{p}(x, t) \zeta_{n}(x) d x=p \int_{\mathbb{R}^{2}} I^{p-1} I_{t} \zeta_{n}(x) d x \\
& \ =p \int_{\mathbb{R}^{2}} I^{p-1}\left(\varepsilon \Delta I+\operatorname{div}\left(I \nabla V_{I}[S, I, R]\right)+\beta S I-\alpha I\right) \zeta_{n}(x) d x \\
& \ =-\varepsilon p(p-1) \int_{\mathbb{R}^{2}} I^{p-2}|\nabla I|^{2} \zeta_{n}(x) d x-\varepsilon p \int_{\mathbb{R}^{2}} I^{p-1} \nabla I\cdot \nabla \zeta_{n}(x) d x\\
& \ -p(p-1) \int_{\mathbb{R}^{2}} I^{p-1} \nabla I \cdot\nabla V_{I}[S, I, R] \zeta_{n}(x) d x -\alpha p\int_{\mathbb{R}^{2}} I^{p} \zeta_{n}(x) d x \\
& \ -p \int_{\mathbb{R}^{2}} I^{p} \nabla V_{I} \cdot\nabla \zeta_{n}(x) d x+\beta p\int_{\mathbb{R}^{2}} I^{p} S \zeta_{n}(x) d x\\
& \ \leq \varepsilon \int_{\mathbb{R}^{2}} I^{p} \Delta \zeta_{n}(x) d x+(p-1) \int_{\mathbb{R}^{2}} I^{p} \Delta V_{I}[S, I, R] \zeta_{n}(x) d x\\
& \ +(p-1) \int_{\mathbb{R}^{2}} I^{p} \nabla V_{I}[S, I, R] \cdot \nabla \zeta_{n}(x) d x \\
& \ -p \int_{\mathbb{R}^{2}} I^{p} \nabla V_{I}[S, I, R] \cdot \nabla \zeta_{n}(x) d x+\beta p\int_{\mathbb{R}^2} S I^{p} \zeta_{n}(x) d x \\
& \ \leq \varepsilon C \int_{\mathbb{R}^{2}} I^{p} \zeta_n(x) d x+(p-1) C \|\Delta V_I[S,I,R]\|_{L^{\infty}} \int_{\mathbb{R}^{2}} I^{p} \zeta_n(x) d x\\
& \ +\left\|\nabla V_{I}[S, I, R]\right\|_{L^{\infty}} \int_{\mathbb{R}^{2}} I^{p}\zeta_n (x) dx+\beta p\|S_0\|_{L^{\infty}}e^{Ct} \int_{\mathbb{R}^{2}} I^{p}\zeta_n(x) d x,
\end{align*}
where the last estimate is justified by \eqref{eq:S_L_inf}. Hence, similarly to \eqref{eq:S_Lp} we use Lemma \ref{lem:nonlocal_etsimates} and obtain, for a suitable constant $C$ independent of $p$, of $\varepsilon$ and of time,
\begin{equation*}
    \frac{d}{d t} \int_{\mathbb{R}^{2}} I^{p}(x, t) \zeta_{n}(x) d x\leq C(1+p)(1+e^{Ct}) \int_{\R^2}I^p(x,t) dx\,.
\end{equation*}
Hence, by integrating in time we
conclude via monotone convergence and Gronwall's lemma
% $$
%     \frac{d}{d t} \int_{\mathbb{R}^{2}} I^{p}(x, t) \zeta_{n}(x) d x\leq C(1+e^{Ct})\int_{\mathbb{R}^{2}} I^{p}  d x
% $$
% equivalently we have:
% $$
%      \int_{\mathbb{R}^{2}} I^{p}(x, t) \zeta_{n}(x) d x - \int_{\mathbb{R}^{2}} I^{p}(x, 0) \zeta_{n}(x) d x \leq C (t) \int_0^t\int_{\mathbb{R}^{2}} I^{p}  d x dt
% $$
% Let $n \rightarrow \infty$. Using the Monotone Convergence theorem, we have that
% $$
%     \int_{\mathbb{R}^{2}} I^{p}(x,t) d x \leq \int_{\mathbb{R}^{2}} I_0^{p} d x + C(t) \int_0^t\int_{\mathbb{R}^{2}} I^{p} d x d t.
% $$
% Now we can apply Gronwall's Lemma for integral.
\begin{equation}\label{eq:I_Lp}
    \int_{\mathbb{R}^{2}} I^{p}(x,t)  d x \leq e^{C(1+p)t(1+e^{Ct})} \int_{\mathbb{R}^{2}} I_{0}^{p}(x) d x,
\end{equation}
where the constant $C$ is independent of $t$, of $p$, and of $\varepsilon$ (for small $\varepsilon>0$). The $L^\infty$ estimate for $I$ follows similarly to what is done for $S$.

\noindent\textbf{Step 3 - $L^p$ estimate of $R$ for $p\in [1,+\infty]$.}
We now perform a similar estimate of the $L^p$ of $R$ for finite $p$. We compute
\begin{align*}
    & \frac{d}{d t} \int_{\mathbb{R}^{2}} R^{p}(x, t) \zeta_{n}(x) d x=p \int_{\mathbb{R}^{2}} R^{p-1} R_{t} \zeta_{n}(x) d x\\
& \ =p \int_{\mathbb{R}^{2}} R^{p-1}\left(\varepsilon \Delta R+\operatorname{div}\left(R \nabla V_{R}[S, I, R]\right)+\alpha I\right) \zeta_{n}(x) d x \\
& \ =-p(p-1) \varepsilon \int R^{p-2}|\nabla R|^{2} \zeta_{n} d x-p \varepsilon \int_{\mathbb{R}^{2}} R^{p-1} \nabla R \cdot \nabla \zeta_{n} d x \\
 & \ -p(p-1) \int_{\mathbb{R}^{2}} R^{p-1} \nabla R\cdot \nabla V_{R}[S,I,R] \zeta_{n} d x\\
 & \ -p \int_{\mathbb{R}^{2}} R^{p} \nabla V_{R}[S,I,R]\cdot \nabla \zeta_{n} d x +\alpha p \int_{\mathbb{R}^{2}} I R^{p-1} \zeta_{n} d x \\
& \  \leq \varepsilon \int_{\mathbb{R}^{2}} R^{p} \Delta \zeta_{n}(x) d x +(p-1) \int_{\mathbb{R}^{2}} R^{p} \Delta V_{R} \zeta_{n} d x\\
 & \ +(p-1) \int_{\mathbb{R}^{2}} R^{p} \nabla V_{R} \cdot \nabla \zeta_{n} d x -p \int_{\mathbb{R}^{2}} R^{p} \nabla V_{R} \cdot \nabla \zeta_{n} d x+\alpha p \int_{\mathbb{R}^{2}} I R^{p-1} \zeta_{n} d x \\
 & \ \leq \varepsilon C \int_{\mathbb{R}^{2}} R^{p} d x+(p-1)\left\|\Delta V_{R}\right\|_{L^{\infty}} \int_{\mathbb{R}^{2}} R^{p}d x\\
 & \ +\left\|\nabla V_{R}\right\|_{L^{\infty}} \int_{\mathbb{R}^{2}} R^{p} d x+\alpha p\|I\|_{L^{p}}\| R\|_{L^{p}}^{p-1}\,,
\end{align*}
where we have used Hoelder's inequality to control the last term. We now use Young's inequality as follows,
\begin{align*}
    &  \|I\|_{L^{p}} \|R\|_{L^{p}}^{p-1} \leq \frac{1}{p}\|I\|_{L^{p}}^{p}+\frac{p-1}{p}\| R\|_{L^{p}}^{p}\\
 & \ \leq \frac{1}{p}\left(e^{(p+1)Ct(1+e^{Ct})}\left\|I_{0}\right\|_{L^{p}}^{p}\right)+\frac{p-1}{p}\| R\|_{L^{p}}^{p},
 \end{align*}
 which implies, once again due to Lemma \ref{lem:nonlocal_etsimates},
 \begin{align*}
&\frac{d}{d t} \int_{\mathbb{R}^{2}} R^{p}(x, t) \zeta_{n}(x) d x \leq p C \int_{\mathbb R^{2}} R^{p}  d x+C_1(t)
\end{align*}
for a suitable constant $C$ independent of $\varepsilon$, of $p$ and of time, and with
\[C_1(t)=Ce^{(p+1)Ct(1+e^{Ct})}\,.\]
By integrating in time, by letting
$n \rightarrow \infty$, by using the Monotone Convergence theorem and Gronwall's inequality, we get
% $$
% \int_{\mathbb{R}^{2}} R^{p}(x,t) d x \leq \int_{\mathbb{R}^{2}} R_0^{p} d x+\int _0^t\tilde C_{2}(t) dt+ \tilde C_{1} \int_0 ^t\int_{\mathbb{R}^{2}} R^{p} d x dt.
% $$
\begin{align*}
    & \int_{\mathbb{R}^{2}} R^{p}(x, t)  d x \leq e^{p Ct}\left(\int_{\mathbb{R}^{2}} R_{0}^{p} d x+C\int_0 ^t e^{(p+1)Cs(1+e^{Cs})} d s\right)\\
    & \ \leq e^{pCt}\int_{\mathbb{R}^{2}} R_{0}^{p} d x+Ct e^{(p+1)Ct(2+e^{Ct})}\,,
\end{align*}
which implies, for finite $p$,
\begin{equation}\label{eq:R_Lp}
    \|R(\cdot,t)\|_{L^p(\R^2)}\leq e^{Ct}\left(\|R_0\|_{L^p(\R^2)}+Cte^{2Ct(2+e^{Ct})}\right),
\end{equation}
and the estimate for $p=+\infty$ follows as in the previous steps. Combining \eqref{eq:S_Lp}, \eqref{eq:I_Lp}, and \eqref{eq:R_Lp}, we obtain \eqref{eq:Lp}.

\noindent\textbf{Step 4. Global existence.} Assuming by contradiction that the maximal existence time $T$ in Proposition \ref{prop:local} is finite, then estimate \eqref{eq:Lp} implies we could extend the solution by continuity to time $t=T$ and prove existence and uniqueness of solutions to the Cauchy problem starting at time $t=T$, with local existence for
some further interval $[T, T+\tau)$ for some positive $\tau>0$. This contradicts $T$ being maximal.
\end{proof}

\begin{Remark}
    \emph{The estimate in \eqref{eq:Lp} is far from being sharp. The only goal in Proposition \ref{propLp} is to find a globally defined function of time controlling the $L^p$ norms of $S$, $I$, and $R$.}
\end{Remark}

\section{Existence via vanishing diffusion}\label{sec:existence}

Having proven that for given nonnegative initial conditions $S_0$, $I_0$, and $R_0$, there exists a unique classical solution to the approximated problem \eqref{mainsystem}, which exists globally in time, we are now ready to tackle the $\varepsilon\searrow 0$ limit problem. The result in proposition \ref{propLp} implies that for all $T\geq 0$ the family of solutions $(S_\varepsilon,I_\varepsilon,R_\varepsilon)$ to \eqref{mainsystem} parametrised by $\varepsilon$ is weakly compact in $L^p$ for finite $p>1$ (resp. weakly-$*$ compact in $L^\infty$) on the space-time domain $\R^2\times [0,T]$ provided the initial functions $S_0, I_0, R_0$ belong to $L^p(\R^d)$. In the context of nonlocal interaction equations (or systems of equations) of the form \eqref{eq:nonlocal_intro}, this would be enough to guarantee the existence of a subsequence that converges weakly to the solution to the $\varepsilon=0$ limiting equation \eqref{eq:nonlocal_intro}. In our case, however, the presence of the nonlinear reaction terms $\beta SI$ and $-\beta SI$ in system \eqref{mainsystem} requires a stronger form of compactness.

\subsection{Uniform $H^1$ estimates}\label{subsec:H1}

In this subsection we assume the initial data belong to $H^1(\R^2)$, and we prove that under such condition, the family $(S_\varepsilon,I_\varepsilon,R_\varepsilon)$ is strongly compact in $L^2$. In order to perform said estimates, we will implicitly assume that we may differentiate the PDEs in \eqref{mainsystem} with respect to $x$, which implies a higher regularity for the solution $(S,I,R)$ compared to $C^{1,2}_{t,x}$. However, the fixed point argument performed in subsection \ref{subsec:local} may be easily adapted to consider an initial condition in $H^1$. More precisely, the functional space $X_{r,T}$ in Proposition \ref{prop:local} should include the $L^2$ norm of the gradient as well. This will imply that we may differentiate the operator $\mathcal{T}(\tilde{S},\tilde{I},\tilde{R})$ with respect to $x$ and perform all estimates needed to prove the well-posedness of the operator and contraction. This procedure is, no matter how tedious and lengthy, pretty standard. We shall, therefore omit the details.

\begin{Proposition}\label{prop:H1}
Let $(S,I,R)$ be the unique solution to the system \eqref{mainsystem} found in section \ref{sec:approximated}. Assume the interaction kernels satisfy \eqref{eq:ass_reg_W} and that the initial functions $S_{0}, I_{0}, R_{0}$ belong to $L^{1}(\R^2) \cap L^{\infty}(\R^2)\cap H^1(\R^2)$ and are nonnegative. Then, for a given $T\geq 0$, the $H^1$-norm of $(S(\cdot,t),I(\cdot,t),R(\cdot,t))$ is uniformly bounded on $[0,T]$ with respect to $\varepsilon>0$.
\end{Proposition}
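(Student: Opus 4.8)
The plan is to obtain a differential inequality for the quantity $\|\nabla S\|_{L^2}^2 + \|\nabla I\|_{L^2}^2 + \|\nabla R\|_{L^2}^2$ whose right-hand side is controlled by the already-established uniform $L^p$ bounds from Proposition \ref{propLp}, and then close it by Gr\"onwall's lemma. Since the $H^1$ norm is the $L^2$ norm plus the $L^2$ norm of the gradient, and the former is already controlled uniformly in $\varepsilon$ by \eqref{eq:Lp} with $p=2$, it suffices to control the gradients. As in the previous section, I would work with the (higher-regularity) classical solution justified by the remark preceding the statement, differentiate the PDEs, multiply each gradient equation by the corresponding gradient, integrate over $\R^2$ (using a cut-off $\zeta_n$ in the same spirit as Lemma \ref{lem:conservation} and then sending $n\to\infty$ to justify the integrations by parts), and sum the three resulting identities.

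The key step is to estimate each term produced by differentiating the transport and reaction parts. First I would compute $\frac{d}{dt}\frac12\int |\nabla S|^2\,dx$. The diffusion term yields $-\varepsilon\int |D^2 S|^2\,dx\leq 0$, which we simply discard (this is why the estimate is uniform in $\varepsilon$). The transport term $\mathrm{div}(S\nabla V_S)$, after differentiation and integration by parts, produces terms schematically of the form $\int \nabla S\cdot\nabla S\,\Delta V_S\,dx$, $\int S\,\nabla S\cdot\nabla(\Delta V_S)\,dx$, and $\int \nabla S\, D^2 V_S\,\nabla S\,dx$. The first and third are bounded by $\|D^2 V_S\|_{L^\infty}\|\nabla S\|_{L^2}^2$, which is controlled via \eqref{eq:est_kernels}; the second, involving third derivatives of $V_S$, is where I would invoke \eqref{eq:est_kernels2}, estimating $\|D^3 V_S\|_{L^2}\leq C(\|\nabla S\|_{L^2}+\|\nabla I\|_{L^2}+\|\nabla R\|_{L^2})$ and then using the uniform $L^\infty$ bound on $S$ together with Cauchy--Schwarz to close in terms of the gradient norms. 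The reaction terms are more benign: differentiating $-\beta SI$ gives $-\beta(\nabla S\, I + S\,\nabla I)$, and pairing with $\nabla S$ produces terms bounded by $\|I\|_{L^\infty}\|\nabla S\|_{L^2}^2$ and $\|S\|_{L^\infty}\|\nabla I\|_{L^2}\|\nabla S\|_{L^2}$; all such $L^\infty$ factors are controlled uniformly in $\varepsilon$ by Proposition \ref{propLp}. The terms $+\beta SI$, $-\alpha I$, and $+\alpha I$ in the $I$ and $R$ equations are handled identically.

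Collecting these estimates and writing $\Phi(t):=\|\nabla S\|_{L^2}^2+\|\nabla I\|_{L^2}^2+\|\nabla R\|_{L^2}^2$, I expect an inequality of the form $\frac{d}{dt}\Phi(t)\leq C(t)\,\Phi(t)$, where $C(t)$ depends only on the uniform $L^1\cap L^\infty$ bounds of $(S,I,R)$ and on the structural constants $\alpha,\beta,W_{\xi\eta}$, hence is independent of $\varepsilon$ and bounded on $[0,T]$. Gr\"onwall's lemma then yields $\Phi(t)\leq \Phi(0)\,e^{\int_0^t C(s)\,ds}$, and since $\Phi(0)=\|\nabla S_0\|_{L^2}^2+\|\nabla I_0\|_{L^2}^2+\|\nabla R_0\|_{L^2}^2$ is finite by the $H^1$ assumption on the data, this gives the uniform bound on the gradients, and thus on the full $H^1$ norm. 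The main obstacle is the appearance of third-order derivatives of the nonlocal potentials: these cannot be bounded in $L^\infty$ using only the $L^p$ bounds on the densities, which is precisely why the $L^2$-based estimate \eqref{eq:est_kernels2} was prepared in Lemma \ref{lem:nonlocal_etsimates}. Matching that $L^2$ bound with an $L^2$ gradient factor and an $L^\infty$ density factor (rather than naively asking for an $L^\infty$ bound on $D^3 V$) is the decisive structural point that makes the argument close in $H^1$ without losing uniformity in $\varepsilon$.
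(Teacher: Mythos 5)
Your proposal follows essentially the same route as the paper's proof: differentiate the equations, test with the gradients (with a cut-off $\zeta_n$ to justify the integrations by parts), discard the nonpositive $\varepsilon$-diffusion contribution, control the second-derivative terms of $V_S,V_I,V_R$ in $L^\infty$ via \eqref{eq:est_kernels}, handle the third-derivative terms via the $L^2$ estimate \eqref{eq:est_kernels2} paired with the uniform $L^\infty$ bounds from Proposition \ref{propLp}, and close with Gr\"onwall. You also correctly identify the decisive structural point, namely that $D^3 V$ must be matched in $L^2$ rather than $L^\infty$, which is exactly how the paper's argument proceeds.
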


\begin{proof}
Let $\zeta_n$ be the cut-off function defined in \eqref{eq:cutoff}. We first compute
\begin{align}
    & \frac{d}{dt} \int_{\mathbb R^2} (S_{x_1}) ^2 \zeta_n dx = 2 \int _{\mathbb R ^2} S_{x_1} \left(S_{x_1}\right)_t \zeta_n dx\nonumber\\
% & \ =2 \int_{\mathbb R ^2} S_{x_1}\left(\varepsilon \Delta S+\operatorname{div}\left(S\nabla V_S[S,I,R]\right) -\beta S I\right)_{x_1} \zeta_n dx\\
& \ =2\varepsilon \int_{\mathbb R^2} S_{x_1}\left(\Delta S\right)_{x_1} \zeta_n dx +2\int_{\mathbb R^2} S_{x_1}\left(\operatorname{div}\left(S\nabla V_S[S,I,R]\right)\right)_{x_1}\zeta_n dx-2\beta \int_{\mathbb R^2} S_{x_1}(SI)_{x_1} \zeta_n dx.\label{eq:H1_1}
\end{align}

We consider the three terms on the right-hand side of \eqref{eq:H1_1} and estimate each of them separately. For the first term we have
\begin{align*}
    & 2\varepsilon \int_{\mathbb R^2} S_{x_1} \left(\Delta S\right)_{x_1} \zeta_n dx =-2\varepsilon \int_{\mathbb R^2} S_{x_1x_1} \Delta S \zeta_n dx -2\varepsilon \int_{\mathbb R^2} S_{x_1} \Delta S \frac{\partial}{\partial x_1} \zeta_n dx\\
    & \ =-2\varepsilon \int_{\mathbb R^2} S_{x_1x_1} \Delta S\zeta_n dx + 2\varepsilon\int_{\mathbb R^2} \nabla S_{x_1} \cdot\nabla S\frac{\partial \zeta_n}{\partial x_1} dx +2\varepsilon \int_{\mathbb R^2} S_{x_1} \nabla S\cdot \frac{\partial}{\partial x_1} \nabla \zeta_n dx\\
   &  =-2\varepsilon\int_{\mathbb R^2} S_{x_1x_1}\Delta S\zeta_n dx -\varepsilon \int_{\mathbb R^2}|\nabla S| ^2 \frac{\partial^2 \zeta_n}{\partial x_1^2} dx + 2\varepsilon \int_{\mathbb R^2} S_{x_1} \nabla S\cdot \frac{\partial}{\partial x_1} \nabla \zeta_n dx.
\end{align*}
For the second term on the right-hand side of \eqref{eq:H1_1}, we have the estimate
\begin{align*}
    & 2\int_{\mathbb R^2} S_{x_1} \left(\operatorname{div}\left(S\nabla V_S\right)\right)_{x_1} \zeta_n dx =2 \int_{\mathbb R^2} S_{x_1} \left(\left(\nabla S\cdot \nabla V_S \right)_{x_1} +\left(S\Delta V_S\right)_{x_1}\right) \zeta_n dx\\
& \ =2 \int_{\mathbb R^2} S_{x_1}\left(\nabla S\right)_{x_1}\cdot \nabla V_S \zeta_n dx + 2 \int_{\mathbb R^2} S_{x_1} \nabla S \cdot\left(\nabla V_S \right)_{x_1} \zeta_n dx\\
& \ +2\int_{\mathbb R^2} (S_{x_1})^2 \Delta V_S \zeta_n dx +2\int_{\mathbb R^2} S_{x_1} S \left(\Delta V_S\right)_{x_1}\zeta_n dx\\
& \ =-\int_{\mathbb R^2} (S_{x_1})^2 \Delta V_S \zeta_n dx - \int_{\mathbb R^2} (S_{x_1})^2 \nabla V_S\cdot \nabla \zeta_n dx\\
& \ +2\int_{\mathbb R^2} (S_{x_1})^2 \left(V_S\right)_{x_1x_1}\zeta_n dx + \int_{\mathbb R^2} S_{x_1} S_{x_2} \left(V_S\right)_{x_1 x_2} \zeta_n dx\\
& \ +2 \int_{\mathbb R^2} (S_{x_1})^2 \Delta V_S \zeta_n dx + 2 \int_{\mathbb R^2} S_{x_1} S\left(\Delta V_S \right)_{x_1} \zeta_n dx\,.
\end{align*}
For the last term above we use Lemma \eqref{eq:est_kernels2} to obtain the estimate
\begin{align*}
    & 2 \int_{\mathbb R^2} S_{x_1} S\left(\Delta V_S \right)_{x_1} \zeta_n dx\leq \|S\|_{L^\infty}\left(\int_{\R^2}S_{x_1}^2 dx + \int_{\R^d}\left|(\Delta V_S)_{x_1}\right|^2dx\right)\\
    & \ \leq \|S\|_{L^\infty}\left(\int_{\R^2}S_{x_1}^2 dx + C\left(\|\nabla S\|_{L^2(\R^2)}^2+\|\nabla I\|_{L^2(\R^2)}^2+\|\nabla R\|_{L^2(\R^2)}^2\right)\right)\,.
\end{align*}
The remaining terms may be treated estimating the terms involving the kernel $V_S$ in $L^\infty$ via \eqref{eq:est_kernels}. Hence, via the global estimates in Proposition \ref{propLp} with $p=+\infty$, for a suitable constant $C$ depending on the initial data and on $T$ (defined for all $T\geq 0$), we obtain
\[
2 \int_{\mathbb R^2} S_{x_1} S\left(\Delta V_S \right)_{x_1} \zeta_n dx\leq C\left(\|\nabla S\|_{L^2(\R^2)}^2+\|\nabla I\|_{L^2(\R^2)}^2+\|\nabla R\|_{L^2(\R^2)}^2\right)\,.
\]
 Here, we have used Young's inequality and the estimates \eqref{eq:cutoff_est} on the cut-off.
% \begin{align*}
% & \ \leq \left(\left\|\Delta V_S\right\|_{L^{\infty}}  + C \left \|\nabla V_S\right\|_{L^\infty} +2\left\|\left(V_S\right)_{x_1x_1}\right\|_{L^\infty} +\left\|\left(\Delta V_S\right)_{x_1}\right\|_{L^\infty}
%  +\|\left(V_S\right)_{x_1 x_2}\|_{L^\infty} \right)\int_{\mathbb R^2} (S_{x_1})^2 dx\\
%   & \ +\|\left(V_S\right)_{x_1 x_2}\|_{L^\infty} \int_{\mathbb R^2} (S_{x_2})^2 dx + \left\|\left(\Delta V_S\right)_{x_1} \right\|_{L^\infty} \int_{\mathbb R^2} S^2 dx\\
%   & \ \leq C \int_{\mathbb R^2} (S_{x_1}) ^2 dx + C \int_{\mathbb R^2} (S_{x_2})^2 dx + C\int_{\mathbb R^2} S^2 dx\\
%   & \leq C\left(\int_{\mathbb R^2} (S_{x_1}) ^2 dx + \int_{\mathbb R^2} (S_{x_2})^2 dx + 1\right)
%   \,,
% \end{align*}
As for the third term on the right-hand side of \eqref{eq:H1_1}, we have the estimate
\begin{align*}
    &
-2\beta \int_{\mathbb R^2}S_{x_1} \left(SI\right)_{x_1} \zeta_n dx = -2 \beta \int_{\mathbb R^2}(S_{x_1})^2 I \zeta_n dx - 2\beta \int_{\mathbb R^2}S S_{x_1} I_{x_1}\zeta_n dx\\
& \ \leq 2\beta \|S\|_{L^{\infty}} \int_{\mathbb R^2}S_{x_1} I_{x_1} \zeta_n dx \leq C \int_{\mathbb R^2} (S_{x_1})^2 dx + C \int_{\mathbb R^2}(I_{x_1}) ^2 dx\,,
\end{align*}
for a suitable constant $C$ which is once again possibly depending on $T$ with $t\in [0,T]$. Combining the previous three estimates in \eqref{eq:H1_1}, we obtain
\begin{align}
     & \frac{d}{dt} \int_{\mathbb R^2} (S_{x_1}) ^2 \zeta_n dx \leq -2\varepsilon \int_{\mathbb R^2} S_{x_1x_1}\Delta S\zeta_n dx -\varepsilon \int_{\mathbb R^2}|\nabla S| ^2 \frac{\partial^2}{\partial x_1^2} \zeta_n dx + 2\varepsilon \int_{\mathbb R^2} S_{x_1} \nabla S\cdot \frac{\partial}{\partial x_1} \nabla \zeta_n dx\nonumber\\
    & \ + C\left(\|\nabla S\|_{L^2(\R^2)}^2+\|\nabla I\|_{L^2(\R^2)}^2+\|\nabla R\|_{L^2(\R^2)}^2\right),\label{eq:est_Sx1}
\end{align}
for a suitable constant $C\geq 0$ independent of $\varepsilon$, possibly depending on $T$ but globally defined for all $T\geq 0$. A similar estimate may be performed for the $x_2$ derivative of $S$, which yields
\begin{align}
    &  \frac{d}{dt} \int_{\mathbb R^2} (S_{x_2}) ^2 \zeta_n dx\leq -2\varepsilon\int_{\mathbb R^2} S_{x_2x_2}\Delta S \zeta_n dx -\varepsilon \int_{\mathbb R^2}|\nabla S| ^2 \frac{\partial^2}{\partial x_2^2} \zeta_n dx + 2\varepsilon \int_{\mathbb R^2} S_{x_2} \nabla S\cdot \frac{\partial}{\partial x_2} \nabla \zeta_n dx\nonumber\\
   & \  + C\left(\|\nabla S\|_{L^2(\R^2)}^2+\|\nabla I\|_{L^2(\R^2)}^2+\|\nabla R\|_{L^2(\R^2)}^2\right)\,.\label{eq:est_Sx2}
\end{align}
Combining \eqref{eq:est_Sx1} and \eqref{eq:est_Sx2}, we obtain
\begin{align*}
   &  \frac{d}{dt} \int_{\mathbb R^2} |\nabla S| ^2 \zeta_n dx \leq -2\varepsilon\int_{\mathbb R^2} (\Delta S) ^2 \zeta_n dx -\varepsilon \int_{\mathbb R^2} |\nabla S| ^2 \Delta \zeta_n dx \\
    & \ + 2\varepsilon \int_{\mathbb R^2}\left(S_{x_1}\nabla S  \frac{\partial }{\partial x_1}\nabla \zeta_n +S_{x_2}\nabla S  \frac{\partial }{\partial x_2}\nabla \zeta_n\right) dx
    + C\left(\|\nabla S\|_{L^2(\R^2)}^2+\|\nabla I\|_{L^2(\R^2)}^2+\|\nabla R\|_{L^2(\R^2)}^2\right)\\
    & \ \leq 2C\left(\|\nabla S\|_{L^2(\R^2)}^2+\|\nabla I\|_{L^2(\R^2)}^2+\|\nabla R\|_{L^2(\R^2)}^2\right),
\end{align*}
for a suitable constant $C\geq 0$ independent of $\varepsilon$ (for small $\varepsilon$), possibly depending on $T$ and globally defined for all $T\geq 0$. We follow the same method to estimate the $L^2$ norm of $\nabla I$. We start by computing
\begin{align}
    & \frac{d}{dt} \int_{\mathbb R^2} (I_{x_1}) ^2 \zeta_n dx = 2 \int _{\mathbb R ^2} I_{x_1} \left(I_{x_1}\right)_t \zeta_n dx\nonumber\\
& \ =2 \int_{\mathbb R ^2} I_{x_1}\left(\varepsilon \Delta I+\operatorname{div}\left(I\nabla V_I[S,I,R]\right) +\beta S I -\alpha I\right)_{x_1} \zeta_n dx\nonumber\\
& \ =2\varepsilon \int_{\mathbb R^2} I_{x_1}\left(\Delta I\right)_{x_1} \zeta_n dx +2\int_{\mathbb R^2} I_{x_1}\left(\operatorname{div}\left(I\nabla V_I\right)\right)\zeta_n dx+2 \int_{\mathbb R^2} I_{x_1}(\beta SI-\alpha I )_{x_1} \zeta_n dx.\label{eq:H1_2}
\end{align}
The first two terms on the right-hand side of \eqref{eq:H1_2} are computed exactly the same way as the first two terms on the right-hand side of \eqref{eq:H1_1}, namely
\begin{align*}
    &   2\varepsilon \int_{\mathbb R^2} I_{x_1}\left(\Delta I\right)_{x_1} \zeta_n dx=-2\varepsilon\int_{\mathbb R^2} I_{x_1x_1}\Delta I\zeta_n dx -\varepsilon \int_{\mathbb R^2}|\nabla I| ^2 \frac{\partial^2}{\partial x_1^2} \zeta_n dx + 2\varepsilon \int_{\mathbb R^2} I_{x_1} \nabla I\cdot \frac{\partial}{\partial x_1} \nabla \zeta_n dx,
\end{align*}
and
\begin{align*}
    & 2\int_{\mathbb R^2} I_{x_1}\left(\operatorname{div}\left(I\nabla V_I\right)\right)\zeta_n dx \leq C\left(\|\nabla S\|_{L^2(\R^2)}^2+\|\nabla I\|_{L^2(\R^2)}^2+\|\nabla R\|_{L^2(\R^2)}^2\right),
\end{align*}
for a suitable constant independent of $\varepsilon$ and possibly depending on $T$ with $t\in [0,T]$. For the final term of \eqref{eq:H1_2} we have
\begin{align*}
    &  2 \int_{\mathbb R^2} I_{x_1}(\beta SI-\alpha I )_{x_1} \zeta_n dx = 2\beta \int_{\mathbb R^2} S (I_{x_1})^2 \zeta_n dx +2\beta \int_{\mathbb R^2}I I_{x_1} S_{x_1} \zeta_n dx -2\alpha \int_{\mathbb R^2} (I_{x_1}) ^2 \zeta_n dx \\
    & \ \leq 2\beta \|S\|_{L^{\infty}} \int_{\mathbb R^2} (I_{x_1})^2 \zeta_n dx + \beta \|I\|_{L^{\infty}}
    \int_{\mathbb R^2} ((S_{x_1}) ^2+(I_{x_1})^2) \zeta_n dx\\
    & \ \leq  C\left(\|\nabla S\|_{L^2(\R^2)}^2+\|\nabla I\|_{L^2(\R^2)}^2+\|\nabla R\|_{L^2(\R^2)}^2\right)\,,
\end{align*}
which implies
\begin{align*}
    & \frac{d}{dt} \int_{\mathbb R^2} (I_{x_1}) ^2 \zeta_n dx \leq -2\varepsilon\int_{\mathbb R^2} I_{x_1x_1}\Delta I\zeta_n dx -\varepsilon \int_{\mathbb R^2}|\nabla I| ^2 \frac{\partial^2}{\partial x_1^2} \zeta_n dx + 2\varepsilon \int_{\mathbb R^2} I_{x_1} \nabla I\cdot \frac{\partial}{\partial x_1} \nabla \zeta_n dx\\
    & \ +C\left(\|\nabla S\|_{L^2(\R^2)}^2+\|\nabla I\|_{L^2(\R^2)}^2+\|\nabla R\|_{L^2(\R^2)}^2\right).
\end{align*}
By a similar estimate on $I_{x_2}$, combined with the previous one, similarly to what done for $\nabla S$, we obtain
\begin{align*}
    &  \frac{d}{dt} \int_{\mathbb R^2} |\nabla I|^2 \zeta_n dx \leq -2\varepsilon\int_{\mathbb R^2} (\Delta I) ^2\zeta_n dx -\varepsilon \int_{\mathbb R^2}|\nabla I| ^2 \Delta \zeta_n dx \\
   & \ + 2\varepsilon \int_{\mathbb R^2}\left(I_{x_1} \nabla I\cdot \frac{\partial}{\partial x_1} \nabla \zeta_n+ I_{x_2} \nabla I\cdot \frac{\partial}{\partial x_2} \nabla \zeta_n\right) dx+  C\left(\|\nabla S\|_{L^2(\R^2)}^2+\|\nabla I\|_{L^2(\R^2)}^2+\|\nabla R\|_{L^2(\R^2)}^2\right)\\
   & \ \leq 2C\left(\|\nabla S\|_{L^2(\R^2)}^2+\|\nabla I\|_{L^2(\R^2)}^2+\|\nabla R\|_{L^2(\R^2)}^2\right)\,.
\end{align*}
% $$
% \begin{gathered}
%     \frac{d}{dt} \int_{\mathbb R^2} (I_{x_2}) ^2 \zeta_n dx \leq -2\varepsilon\int_{\mathbb R^2} I_{x_2x_2}\Delta I\zeta_n dx -\varepsilon \int_{\mathbb R^2}|\nabla I| ^2 \frac{\partial^2}{\partial x_2^2} \zeta_n dx + 2\varepsilon \int_{\mathbb R^2} I_{x_2} \nabla I\cdot \frac{\partial}{\partial x_2} \nabla \zeta_n dx\\
%     +C_1(t) \int_{\mathbb R^2} (I_{x_2}) ^2 dx + C_2 \int_{\mathbb R^2} (I_{x_1})^2 dx + C_3 (t)\int_{\mathbb R^2}(S_{x_2})^2 dx +C_4(t).\\
%     \Rightarrow
% \end{gathered}
% $$
A similar estimate may be obtained for  $\nabla R$ (we omit the details). Combining all the above, we obtain
% $$
% \frac{d}{dt} \int_{\mathbb R^2} |\nabla R|^2 \zeta_n dx \leq C_1 \int_{\mathbb R^2} |\nabla R|^2 dx + C_2 \int_{\mathbb R^2} |\nabla I|^2 dx.
% $$
%In total, we have that
\begin{align*}
    &  \frac{d}{dt} \int_{\mathbb R^2} \left(|\nabla S|^2+|\nabla I|^2+ |\nabla R|^2\right) \zeta_n dx \leq C\left( \int_{\mathbb R^2} |\nabla S|^2 dx +\int_{\mathbb R^2} |\nabla I|^2 dx+\int_{\mathbb R^2} |\nabla R|^2 dx\right).
\end{align*}
Here, the constant $C$ depends on the initial conditions, and it involves $L^\infty$ estimates of $(S,I,R)$ on $[0,T]$.
% $$
% \begin{gathered}
%     \frac{d}{dt} \int_{\mathbb R^2} \left(|\nabla S|^2+|\nabla I|^2+ |\nabla R|^2\right) \zeta_n dx \leq C_1(t) \int_{\mathbb R^2} |\nabla S|^2 dx +C_2 (t)\int_{\mathbb R^2} |\nabla I|^2 dx\\
%     +C_3(t)\int_{\mathbb R^2} |\nabla R|^2 dx +C_4(t)\\
%     \leq C_1(t) \int_{\mathbb R^2} \left(|\nabla S|^2+|\nabla I|^2+ |\nabla R|^2\right) dx +C_2(t).
% \end{gathered}
% $$
Integrating in time and using monotone convergence to let $n\rightarrow+\infty$, we get the desired assertion.
\end{proof}

\subsection{Vanishing viscosity limit}\label{subsec:vanishing}

The goal of this subsection is to let $\varepsilon \searrow 0$ in \eqref{mainsystem} in order to detect the existence of a solution to \eqref{mainsystem_intro}.

\begin{Proposition}
Let $(S_{\varepsilon}, I_{\varepsilon}, R_{\varepsilon})$ be the unique solution to the system \eqref{mainsystem} with the diffusion constant $\varepsilon>0$. Under the assumptions of Theorem \ref{thm:main}, there exists a solution
$$(S,I,R)\in L^\infty ([0,T];H^1(\mathbb R ^2))^3$$ that solves \eqref{mainsystem_intro} which is the $\varepsilon\searrow 0$ limit of a subsequence of $(S_{\varepsilon}, I_{\varepsilon}, R_{\varepsilon})$ in the $L^2(\R^2\times [0,T])$-sense for all $T\geq 0$.
\end{Proposition}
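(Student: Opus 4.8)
The plan is to combine the uniform-in-$\varepsilon$ estimates from Propositions \ref{propLp} and \ref{prop:H1} with an Aubin--Lions type compactness argument, the guiding principle being that the only genuinely nonlinear term, $\beta S_\varepsilon I_\varepsilon$, requires \emph{strong} $L^2$ compactness, whereas every other term can be passed to the limit using weak convergence together with the smoothing properties of the convolution kernels. First I would collect the bounds: by Proposition \ref{propLp} the families $(S_\varepsilon,I_\varepsilon,R_\varepsilon)$ are bounded, uniformly in $\varepsilon$, in $L^\infty([0,T];L^1\cap L^\infty(\mathbb{R}^2))$, and by Proposition \ref{prop:H1} they are bounded in $L^\infty([0,T];H^1(\mathbb{R}^2))$. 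Up to a subsequence (not relabelled) this yields weak-$*$ limits $S,I,R$ in $L^\infty([0,T];L^p)$ for every finite $p$ and weak limits in $L^2([0,T];H^1)$; by weak lower semicontinuity of the norms the limit inherits the bound in $L^\infty([0,T];H^1)$ claimed in the statement.

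Next I would derive a uniform bound on the time derivatives in a negative-order space. Testing the first equation of \eqref{mainsystem} against $\varphi\in H^1(\mathbb{R}^2)$ and integrating by parts gives
\begin{equation*}
\langle \partial_t S_\varepsilon,\varphi\rangle = -\varepsilon\int_{\mathbb{R}^2}\nabla S_\varepsilon\cdot\nabla\varphi\,dx - \int_{\mathbb{R}^2} S_\varepsilon\,\nabla V_S[S_\varepsilon,I_\varepsilon,R_\varepsilon]\cdot\nabla\varphi\,dx - \beta\int_{\mathbb{R}^2} S_\varepsilon I_\varepsilon\,\varphi\,dx\,,
\end{equation*}
and each term is controlled by $C\|\varphi\|_{H^1}$: the $H^1$ bound for the diffusion term (which in fact carries a factor $\varepsilon$), the bound $\|\nabla V_S\|_{L^\infty}\le C$ from Lemma \ref{lem:nonlocal_etsimates} together with $\|S_\varepsilon\|_{L^2}\le C$ for the transport term, and $\|S_\varepsilon\|_{L^\infty}\|I_\varepsilon\|_{L^2}\le C$ for the reaction term. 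Hence $\partial_t S_\varepsilon$ (and likewise $\partial_t I_\varepsilon$, $\partial_t R_\varepsilon$) is bounded in $L^2([0,T];H^{-1}(\mathbb{R}^2))$ uniformly in $\varepsilon$. With $(S_\varepsilon)$ bounded in $L^2([0,T];H^1)$ and $(\partial_t S_\varepsilon)$ bounded in $L^2([0,T];H^{-1})$, the Aubin--Lions--Simon lemma applied on each ball $B_R\subset\mathbb{R}^2$, using the compactness of the embedding $H^1(B_R)\hookrightarrow L^2(B_R)$, together with a diagonal extraction over $R\to\infty$, yields strong convergence $S_\varepsilon\to S$, $I_\varepsilon\to I$, $R_\varepsilon\to R$ in $L^2([0,T];L^2_{\mathrm{loc}}(\mathbb{R}^2))$, and up to a further subsequence almost everywhere on $\mathbb{R}^2\times[0,T]$.

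With these convergences in hand I would pass to the limit in the weak formulation tested against $\varphi\in C_c^\infty(\mathbb{R}^2\times[0,T))$. The artificial diffusion contributes $\varepsilon\int S_\varepsilon\Delta\varphi$, which vanishes as $\varepsilon\searrow 0$ since $S_\varepsilon$ is bounded in $L^2$. For the nonlocal transport term the smoothing of convolution is decisive: since each $\nabla W_{\xi\eta}\in L^1\cap L^\infty$, the convergence of the densities turns into strong $L^2_{\mathrm{loc}}$ (indeed locally uniform) convergence $\nabla V_S[S_\varepsilon,I_\varepsilon,R_\varepsilon]\to\nabla V_S[S,I,R]$, so that, writing the product as $S_\varepsilon(\nabla V_S[S_\varepsilon,I_\varepsilon,R_\varepsilon]-\nabla V_S[S,I,R])+S_\varepsilon\nabla V_S[S,I,R]$, the first piece vanishes by strong convergence of the velocity field and the second passes by weak $L^2$ convergence of $S_\varepsilon$ against the compactly supported $\nabla V_S[S,I,R]\cdot\nabla\varphi$. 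Finally, the nonlinear reaction term is handled precisely by the strong local compactness: since $S_\varepsilon\to S$ and $I_\varepsilon\to I$ in $L^2_{\mathrm{loc}}$ and both are bounded in $L^\infty$, we get $S_\varepsilon I_\varepsilon\to SI$ in $L^1_{\mathrm{loc}}$, which suffices against the compactly supported $\varphi$.

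The main obstacle is exactly the nonlinear term $\beta S_\varepsilon I_\varepsilon$: weak convergence does not pass to products, and since we work on the unbounded domain $\mathbb{R}^2$ the embedding $H^1(\mathbb{R}^2)\hookrightarrow L^2(\mathbb{R}^2)$ is not compact, so one cannot expect global strong $L^2$ convergence without an additional tightness (e.g.\ moment) estimate. What makes the argument go through without such an estimate is that weak solutions are tested against compactly supported $\varphi$, so the nonlinearity need only be controlled on bounded sets, where the local strong $L^2$ compactness from Aubin--Lions is available; this is the reason the $H^1$ estimate of Proposition \ref{prop:H1} is the crucial ingredient of the existence proof.
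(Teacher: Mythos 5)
Your proposal is correct and follows essentially the same route as the paper: uniform bounds from Propositions \ref{propLp} and \ref{prop:H1}, a uniform bound on the time derivatives in $L^2([0,T];H^{-1})$, Aubin--Lions on bounded sets with a diagonal extraction to get strong $L^2_{\mathrm{loc}}$ and a.e.\ convergence, and passage to the limit in the weak formulation against compactly supported test functions, with the strong local compactness reserved precisely for the nonlinearity $\beta S_\varepsilon I_\varepsilon$. The only differences are cosmetic: you derive the $H^{-1}$ bound by duality where the paper checks term by term via the decomposition $\operatorname{div}(S\nabla V_S)=\nabla S\cdot\nabla V_S+S\Delta V_S$, and your handling of the nonlocal drift (locally uniform convergence of $\nabla V_S[S_\varepsilon,I_\varepsilon,R_\varepsilon]$ plus a product splitting) is in fact slightly more explicit than the paper's a.e.-convergence argument.
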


\begin{proof}
Consider the diffusion constant to be $\varepsilon_m= \frac{1}{m}, m\in \mathbb N$ and $(S_m, I_m, R_m)$ to be corresponding solution to the system \eqref{mainsystem} with the diffusion constant $\varepsilon_m$. From Propositions \ref{prop:H1} and \ref{prop:H1}, we know that $S_m, I_m, R_m\in L^\infty([0,T],H^1(\mathbb R ^2))$. Consider not a bounded domain $\Omega \subset \R^2$. Since $S_m$, $I_m$, and $R_m$ are uniformly bounded in $H^1(\Omega)$ on $[0,T]$, then $\Delta S_m, \Delta I_m, \Delta R_m$ are uniformly bounded in $H^{-1}(\Omega)$ on $[0,T]$. Moreover, the terms $\beta SI$ and $\alpha I$ belong to $H^1(\Omega)$ due to Proposition \ref{propLp} with $p=+\infty$ and Proposition \ref{prop:H1}. Furthermore, we claim that the terms $\operatorname{div}(S\nabla V_S)$, $\operatorname{div}(I\nabla V_I)$, $\operatorname{div}(R\nabla V_R)$ belong to $L^2(\Omega)$. In order to see this, write for example
\[\mathrm{div}(R\nabla V_R)=\nabla R\cdot \nabla V_R + R\Delta V_R\,.\]
Since $\nabla R$ is uniformly bounded in $H^1$ due to Proposition \ref{prop:H1} and $\nabla V_R$ is uniformly bounded in $L^\infty$ due to Lemma \ref{lem:nonlocal_etsimates}, the first term in the above sum is uniformly bounded in $L^2$. A similar conclusion holds for the second term still due to Lemma \ref{lem:nonlocal_etsimates}. Similar considerations hold for the other terms in divergence form involving the nonlocal interaction operators.
Hence, by considering each equation in \eqref{mainsystem}, we can conclude that $S_t$, $I_t$, and $R_t$ all belong to $L^\infty([0,T]\,;\,\,H^{-1}(\Omega))$. Hence, we have
\begin{equation}\label{unifbound4seq}
\begin{array}{cc}
    S_m, I_m, R_m \quad\hbox{uniformly bounded in}\,\, L^2 ([0,T]; H^1(\mathbb R^2)),\\
    S_{m,t}, I_{m,t}, R_{m,t} \quad\hbox{uniformly bounded in}\,\, L^2 ([0,T]; H^{-1}(\mathbb R^2)).
\end{array}
\end{equation}
We may therefore use the classical Aubin-Lion lemma to obtain strong compactness of $$\{(S_m,I_m,R_m)\}_{m=1} ^{\infty}$$ in $L^2([0,T]\times \Omega$ for every bounded domain $\Omega$, and then, by a standard diagonal procedure, the existence of a subsequence of $\{(S_m,I_m,R_m)\}_{m=1} ^{\infty}$ which converges almost everywhere on $\R^2\times [0,T]$ to some limit function
$(S,I,R) \in L^2([0,T]\times \mathbb R ^2)$ as $m\rightarrow \infty$.
We claim this limit point is a weak solution to the system \eqref{mainsystem_intro}. To prove this claim, consider the test function $\phi \in C_c^\infty ([0,T]\times \mathbb R^2)$. For any $k\in \mathbb N$ we have:
\begin{align*}
    & 0 =  \int_0^t\int_{\mathbb R^2} \left(S_{k,t} \phi-\varepsilon_k \Delta S_k \phi -\operatorname{div}(S_k \nabla V_S)\phi +\beta S_kI_k\phi \right)dxdt\\
    & \ =-\int_0^t \int_{\mathbb R^2}S_k \phi_t dxdt - \varepsilon_k \int_0^t \int_{\mathbb R^2} S_k \Delta \phi dxdt + \int_0^t \int_{\mathbb R^2} S_k\nabla \phi\cdot\nabla V_S dxdt +\beta \int_0^t \int_{\mathbb R^2} S_k I_k\phi dxdt\,.
\end{align*}
The regularity of $\phi$ implies $\Delta \phi \in L^2_{x,t}$, which implies, due to Proposition \ref{propLp}, that
$$-\varepsilon_k\int_0^t \int_{\mathbb R^2} S_k \Delta \phi dxdt$$ is converging to zero as $k\rightarrow \infty$ strongly in $L^2$. The regularity of the kernels \eqref{eq:ass_reg_W} imply
\[
\nabla V_S[S_k,I_k,R_k] =\nabla W_{SS}* S_k+\nabla W_{SI}* I_k+\nabla W_{SR}*R_k\rightarrow \nabla V_S[S,I,R]\quad    a.e. \text{ as } n\rightarrow \infty,
\]
which implies
\[
\int_0^t \int_{\mathbb R^2} S_k\nabla V_S[S_k,I_k,R_k] \cdot \nabla \phi dx dt \rightarrow \int_0^t \int_{\mathbb R^2} S\nabla V_S[S,I,R] \cdot \nabla \phi dx dt
\]
as $k\rightarrow +\infty$. The reaction term in the $S$ equation also easily passes to the limit due to strong $L^2$ convergence, namely
\[
\beta\int_0^t \int _{\mathbb R^2} S_k I_k \phi dxdt \rightarrow \beta\int_0^t \int _{\mathbb R^2} S I \phi dxdt\,.
\]
Hence, we obtain
\[
\int_0^t \int _{\mathbb R^2} (S_t +\beta SI -\operatorname{div}(S \nabla V_S[S,I,R]))\phi dx dt =0\,.
\]
Using the same method, we can show that $(S,I,R)$ solve the other two constraints of the system \ref{mainsystem_intro} and hence, it is a weak solution to the system.
The sequence $\{(S_k,I_k,R_k)\}_{k=1}^\infty$ also satisfies \eqref{unifbound4seq}. Hence, a standard weak-lower semi-continuity argument implies
\begin{align*}
    & \|S\|_{L^2([0,T];H^1(\mathbb R ^2)}\leq \liminf_{n\rightarrow \infty} \|S_n\|_{L^2([0,T];H^1(\mathbb R ^2)},\\
& \|I\|_{L^2([0,T];H^1(\mathbb R ^2)}\leq \liminf_{n\rightarrow \infty} \|I_n\|_{L^2([0,T];H^1(\mathbb R ^2)},\\
& \|R\|_{L^2([0,T];H^1(\mathbb R ^2)}\leq \liminf_{n\rightarrow \infty} \|R_n\|_{L^2([0,T];H^1(\mathbb R ^2)}\,,
\end{align*}
which concludes the proof.
\end{proof}

\section{Uniqueness}
\label{sec:unique}

In order to conclude the proof of Theorem \ref{thm:main}, we only need to prove the uniqueness statement, which we rephrase in the next Proposition.

\begin{Proposition}\label{uniqueness}
 Under the same assumptions of Theorem \ref{thm:main}, for a fixed $T\geq 0$, there exists at most one weak solution to \eqref{mainsystem_intro} with $L^\infty([0,T]\,;\,\, L^1(\R^2)\cap L^\infty(\R^2)\cap H^1(\R^2))$ regularity with a given initial condition $(S_0, I_0, R_0)$ satisfying the assumptions of Theorem \ref{thm:main}.
\end{Proposition}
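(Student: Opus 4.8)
The plan is to establish uniqueness through a classical $L^2$-stability estimate, crucially exploiting the $H^1$-regularity that is part of the hypotheses. Let $(S_1,I_1,R_1)$ and $(S_2,I_2,R_2)$ be two weak solutions in $L^\infty([0,T];L^1\cap L^\infty\cap H^1(\R^2))$ with the same initial datum, and set $\bar S=S_1-S_2$, $\bar I=I_1-I_2$, $\bar R=R_1-R_2$. First I would record, exactly as in Section \ref{sec:existence}, that each of $\partial_t\bar S,\partial_t\bar I,\partial_t\bar R$ belongs to $L^2([0,T];H^{-1}(\R^2))$; since the differences lie in $L^\infty([0,T];H^1)$, this places them in $C([0,T];L^2)$ and legitimises the chain rule $\frac{d}{dt}\|\bar S\|_{L^2}^2=2\langle\partial_t\bar S,\bar S\rangle$. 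The objective is then to bound $\frac{d}{dt}\big(\|\bar S\|_{L^2}^2+\|\bar I\|_{L^2}^2+\|\bar R\|_{L^2}^2\big)$ by a constant times the same quantity and to close by Grönwall's lemma, the differences vanishing at $t=0$.

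Testing the equation for $\bar S$ against $\bar S$, I would use the linearity of $V_S$ in its three arguments to split the transport flux as
\[
S_1\nabla V_S[S_1,I_1,R_1]-S_2\nabla V_S[S_2,I_2,R_2]=\bar S\,\nabla V_S[S_1,I_1,R_1]+S_2\,\nabla V_S[\bar S,\bar I,\bar R].
\]
The first contribution, after integrating by parts twice, becomes $\tfrac12\int\bar S^2\,\Delta V_S[S_1,I_1,R_1]\,dx$, which is controlled by $C\|\bar S\|_{L^2}^2$ because $\|\Delta V_S[S_1,I_1,R_1]\|_{L^\infty}$ is bounded through \eqref{eq:est_kernels} by the conserved (hence bounded) $L^1$-norms of $S_1,I_1,R_1$.

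The velocity-difference contribution is the delicate term, and here I would deliberately \emph{not} integrate by parts but instead expand the divergence:
\[
\int\bar S\,\operatorname{div}\!\big(S_2\nabla V_S[\bar S,\bar I,\bar R]\big)\,dx=\int\bar S\,\nabla S_2\cdot\nabla V_S[\bar S,\bar I,\bar R]\,dx+\int\bar S\,S_2\,\Delta V_S[\bar S,\bar I,\bar R]\,dx.
\]
Both pieces are quadratic in the differences: in the first I combine $\|\nabla V_S[\bar S,\bar I,\bar R]\|_{L^\infty}\leq C(\|\bar S\|_{L^2}+\|\bar I\|_{L^2}+\|\bar R\|_{L^2})$ from \eqref{eq:est_kernels} with the uniform bound $\|\nabla S_2\|_{L^2}\leq C$ coming from the $H^1$-estimate; in the second I use $\|S_2\|_{L^\infty}\leq C$ from Proposition \ref{propLp} together with $\|\Delta V_S[\bar S,\bar I,\bar R]\|_{L^2}\leq C(\|\bar S\|_{L^2}+\|\bar I\|_{L^2}+\|\bar R\|_{L^2})$, which follows from Young's convolution inequality and $\|D^2W_{\xi\eta}\|_{L^1}<\infty$ in \eqref{eq:ass_reg_W}. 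This is precisely where $H^1$-regularity is indispensable: the factor $\nabla S_2\in L^2$ is what lets this term be absorbed quadratically. The reaction terms are elementary once one writes $S_1I_1-S_2I_2=\bar S\,I_1+S_2\,\bar I$ and invokes the uniform $L^\infty$-bounds, producing only quadratic expressions such as $C\|\bar S\|_{L^2}^2+C\|\bar S\|_{L^2}\|\bar I\|_{L^2}$.

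Carrying out the same estimate for the $I$- and $R$-equations (the transition terms $\beta SI-\alpha I$ and $\alpha I$ are handled identically, the latter giving only $\alpha\int\bar R\,\bar I\,dx$) and summing, I would obtain
\[
\frac{d}{dt}\big(\|\bar S\|_{L^2}^2+\|\bar I\|_{L^2}^2+\|\bar R\|_{L^2}^2\big)\leq C\big(\|\bar S\|_{L^2}^2+\|\bar I\|_{L^2}^2+\|\bar R\|_{L^2}^2\big),
\]
with $C$ depending on $T$, $\alpha$, $\beta$, the kernels and the $L^\infty\cap H^1$-bounds of the two solutions. Grönwall's inequality, together with the vanishing of the initial differences, then forces $\bar S=\bar I=\bar R=0$ on $[0,T]$. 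I expect the genuine obstacle to be exactly the velocity-difference term: the choice to expand the divergence rather than integrate by parts is essential, since an integration by parts there would leave a factor $\nabla\bar S$ in $L^2$ that cannot be absorbed into the $L^2$-Grönwall quantity (it would yield a bound scaling like the square root of that quantity). A minor technical point is the justification of the integrations by parts on the whole of $\R^2$, which I would handle as elsewhere in the paper by inserting the cut-offs $\zeta_n$ and letting $n\to\infty$.
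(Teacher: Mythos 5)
Your proposal is correct and follows essentially the same route as the paper: an $L^2$ energy estimate with the bilinear splitting of the flux difference, the kernel bounds of Lemma \ref{lem:nonlocal_etsimates}, the assumed $H^1$ bound to absorb the velocity-difference term quadratically, cut-offs $\zeta_n$, and Grönwall. The only differences are cosmetic---you place the difference on the symmetric factor ($\bar S\,\nabla V_S[\mathcal{S}_1]+S_2\,\nabla V_S[\bar{\mathcal{S}}]$ instead of the paper's $S_1\,\nabla V_S[\mathcal{S}_1-\mathcal{S}_2]+(S_1-S_2)\,\nabla V_S[\mathcal{S}_2]$) and pair $\Vert\Delta V_S[\bar{\mathcal{S}}]\Vert_{L^2}$ with $\Vert S_2\Vert_{L^\infty}$ where the paper pairs $\Vert\Delta V_S[\bar{\mathcal{S}}]\Vert_{L^\infty}$ with $\Vert S_1\Vert_{L^2}$---and your remark on why one must not integrate the velocity-difference term by parts correctly identifies the step where the argument would otherwise fail.
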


\begin{proof}
Let $(S_1,I_1,R_1)$ and $(S_2,I_2,R_2)$ be two solutions to the system \eqref{mainsystem_intro} in $L^\infty([0,T];L^1(\R^2)\cap L^\infty(\R^2)\cap H^1(\R^2))$, with $S_1(0)=S_2(0)=S_0$, $I_1(0)=I_2(0)=I_0$ and $R_1(0)=R_2(0)=R_0$. We compute
\begin{align}
    & \frac{d}{dt} \int_{\mathbb R^2 } (S_1-S_2)^2 \zeta_n dx = 2\int_{\mathbb R^2} (S_1-S_2)(S_1-S_2)_t \zeta_n dx\nonumber\\
    & \  = 2 \int_{\mathbb R^2} (S_1-S_2)(\operatorname{div}(S_1\nabla V_S[S_1,I_1,R_1])-\operatorname{div}(S_2\nabla V_S[S_2,I_2,R_2])-\beta S_1 I_1+\beta S_2 I_2)\zeta_n dx\nonumber\\
   % & \  = 2 \int_{\mathbb R^2} (S_1-S_2)(\operatorname{div}(S_1\nabla V_S[S_1,I_1,R_1])-\operatorname{div}(S_1\nabla V_S[S_2,I_2,R_2])) \zeta_n dx\\
   % & \  +2 \int_{\mathbb R^2} (S_1-S_2)(\operatorname{div}(S_1\nabla V_S[S_2,I_2,R_2])-\operatorname{div}(S_2\nabla V_S[S_2,I_2,R_2])) \zeta_n dx\\
   %  & \ +2 \int_{\mathbb R^2} (S_1-S_2)(-\beta S_1 I_1+\beta S_1 I_2-\beta S_1 I_2+\beta S_2 I_2) \zeta_n dx\\
    & \ =2 \int_{\mathbb R^2} (S_1-S_2) \operatorname{div}(S_1 (\nabla V_S[S_1,I_1,R_1]-\nabla V_S[S_2,I_2,R_2])) \zeta_n dx\nonumber\\
   & \  + 2 \int_{\mathbb R^2} (S_1-S_2) \operatorname{div}((S_1-S_2)\nabla V_S[S_2,I_2,R_2]) \zeta_n dx \nonumber\\
    & \ -2\beta \int_{\mathbb R^2} (S_1-S_2) S_1 (I_1-I_2) \zeta_n dx -2\beta \int_{\mathbb R^2} (S_1-S_2)^2 I_2 \zeta_n dx\,.\label{eq:est_unique1}
\end{align}
We denote for simplicity $\mathcal{S}_1=(S_1,I_1,R_1)$ and $\mathcal{S}_2=(S_2,I_2,R_2)$. The first term on the right-had side of \eqref{eq:est_unique1} may be written as
\begin{align*}
    & 2 \int_{\mathbb R^2} (S_1-S_2) \operatorname{div}(S_1 (\nabla V_S[\mathcal{S}_1]-\nabla V_S[\mathcal{S}_2])) \zeta_n dx= 2\int_{\mathbb R^2} (S_1-S_2) \operatorname{div}(S_1 \nabla V_S[\mathcal{S}_1-\mathcal{S}_2]) \zeta_n dx\\
& \ =2 \int_{\mathbb R^2} (S_1-S_2) (\nabla S_1\cdot \nabla V_S[\mathcal{S}_1-\mathcal{S}_2]+S_1\Delta V_S[\mathcal{S}_1-\mathcal{S}_2]) \zeta_n dx\,.
\end{align*}
We apply the result in Lemma \ref{lem:nonlocal_etsimates} to get
\begin{align*}
    & \|\nabla V_s[\mathcal{S}_1-\mathcal{S}_2]\|_{L^\infty(\R^2)}+\|\Delta V_S[\mathcal{S}_1-\mathcal{S}_2]\|_{L^\infty(\R^2)}\\
    & \ \leq C\left(\|S_1-S_2\|_{L^2(\R^2)}+\|I_1-I_2\|_{L^2(\R^2)}+\|R_1-R_2\|_{L^2(\R^2)}\right)\,.
\end{align*}
Moreover, we know from Proposition \ref{prop:H1} that there exists a constant $C$ possibly depending on $T\geq 0$ such that
\[\sup_{0\leq t\leq T}\left(\|\nabla S_1\|_{L^2(\R^2)} + \|S_1\|_{L^2(\R^2)}\right)\leq C\,.\]
Hence, we may use Hoelder's inequality and the fact that $\zeta_n\in [0,1]$ to obtain
\begin{align*}
    &  2 \int_{\mathbb R^2} (S_1-S_2) (\nabla S_1\cdot \nabla V_S[\mathcal{S}_1-\mathcal{S}_2]+S_1\Delta V_S[\mathcal{S}_1-\mathcal{S}_2]) \zeta_n dx\\
    & \ \leq 2\|S_1-S_2\|_{L^2} \|\nabla S_1\|_{L^2} \|\nabla V_S[\mathcal{S}_1-\mathcal{S}_2]\|_{L^\infty}+2\|S_1-S_2\|_{L^2} \|S_1\|_{L^2} \|\Delta V_S[\mathcal{S}_1-\mathcal{S}_2]\|_{L^\infty}\\
    & \ \leq C \|S_1-S_2\|_{L^2}(\|S_1\|_{L^2}+\|\nabla S_1\|_{L^2})(\|S_1-S_2\|_{L^2} + \|I_1-I_2\|_{L^2} + \|R_1-R_2\|_{L^2})\\
    % & \ \leq C (\|S_1-S_2\|_{L^2}^2 + \|S_1-S_2\|_{L^2} \|I_1-I_2\|_{L^2} + \|S_1-S_2\|_{L^2} \|R_1-R_2\|_{L^2})\\
    & \ \leq C \|S_1-S_2\|_{L^2}^2 + C \|I_1-I_2\|_{L^2}^2 + C \|R_1-R_2\|_{L^2}^2.
\end{align*}
% We know that $S_1-S_2,I_1-I_2,R_1-R_2 \in L^2(\mathbb R^2)$ and $S_1,\nabla S_1\in L^2(\mathbb(R))$. Also, we can see that:
% $$
% \begin{gathered}
%     \|\nabla V_S[\mathcal{S}_1-\mathcal{S}_2]\|_{L^\infty}\leq \|\nabla W_{SS}*(S_1-S_2)\|_{L^\infty} + \|\nabla W_{SI}*(I_1-I_2)\|_{L^\infty} + \|\nabla W_{SR}*(R_1-R_2)\|_{L^\infty}\\
%     \leq \|\nabla W_{SS}\|_{L^2}\|S_1-S_2\|_{L^2}+\|\nabla W_{SI}\|_{L^2}\|I_1-I_2\|_{L^2}+\|\nabla W_{SR}\|_{L^2}\|R_1-R_2\|_{L^2}\\
%     \leq C(\|S_1-S_2\|_{L^2}+ \|I_1-I_2\|_{L^2}+\|R_1-R_2\|_{L^2})\\
%     \end{gathered}
% $$
% Similarly, we have:
% $$
%     \|\Delta V_S[\mathcal{S}_1-\mathcal{S}_2]\|_{L^\infty}\leq C(\|S_1-S_2\|_{L^2}+ \|I_1-I_2\|_{L^2}+\|R_1-R_2\|_{L^2})
% $$
% \textcolor{red}{add an explanation on how you can get rid of the zetta and why it is ok}
We estimate the second term on the right-hand side of \eqref{eq:est_unique1} very similarly as follows:
% For the term $2 \int_{\mathbb R^2} (S_1-S_2) \operatorname{div}((S_1-S_2)\nabla V_S[S_2,I_2,R_2]) \zeta_n dx$ we have:
\begin{align*}
    & 2 \int_{\mathbb R^2} (S_1-S_2) \operatorname{div}((S_1-S_2)\nabla V_S[\mathcal{S}_2]) \zeta_n dx\\
    & \  = 2 \int_{\mathbb R^2} (S_1-S_2) \nabla (S_1-S_2)\cdot \nabla V_S[\mathcal{S}_2] \zeta_n dx + 2 \int_{\mathbb R^2} (S_1-S_2)^2 \Delta V_S[\mathcal{S}_2] \zeta_n dx\\
    & \  = - \int_{\mathbb R ^2} (S_1-S_2)^2 ( \Delta V_S[\mathcal{S}_2] \zeta_n + \nabla V_S[\mathcal{S}_2] \cdot \nabla \zeta _n) dx\\
    & \  + 2 \int_{\mathbb R^2} (S_1-S_2)^2 \Delta V_S[\mathcal{S}_2] \zeta_n dx\\
    & \  \leq C \left(\|\Delta V_S[\mathcal{S}_2]\|_{L^\infty} + \|\nabla V_S[\mathcal{S}_2]\|_{L^\infty}\right) \|S_1-S_2\|_{L^2}^2\leq C \|S_1-S_2\|_{L^2}^2 .
\end{align*}
To control the last two terms in \eqref{eq:est_unique1}, namely the reaction terms, we use the estimate
\begin{align*}
    & +2\beta \int_{\mathbb R^2} (S_1-S_2) S_1 (I_1-I_2) \zeta_n dx + 2\beta \int_{\mathbb R^2} (S_1-S_2)^2 I_2 \zeta_n dx\\
    & \leq 2 \beta  \|S_1\|_{L^\infty} (\|S_1-S_2\|_{L^2}^2+\|I_1-I_2\|_{L^2}^2) + 2\beta \|I_2\|_{L^\infty} \|S_1-S_2\|_{L^2}^2\\
    & \leq C\|S_1-S_2\|_{L^2}^2 +C\|I_1-I_2\|_{L^2}^2\,.
\end{align*}
Combining the above estimates in \eqref{eq:est_unique1}, we obtain
\begin{equation}\label{eq:est_unique_2}
    \frac{d}{dt} \int_{\mathbb R^2 } (S_1-S_2)^2 \zeta_n dx \leq C \left(\|S_1-S_2\|_{L^2}^2 +  \|I_1-I_2\|_{L^2}^2 +  \|R_1-R_2\|_{L^2}^2\right),
\end{equation}
for a suitable constant $C$ depending on the initial conditions and possibly on $T$. The estimate \eqref{eq:est_unique_2} may be easily performed on the $I$ and $R$ components as well. The divergence terms may be dealt with in exactly the same way. The reaction terms are only slightly different by they give rise to similar terms. Combining the three estimates, we obtain
% Similarly, we have that:
% $$
% \begin{gathered}
%     \frac{d}{dt} \int_{\mathbb R^2 } (I_1-I_2)^2 \zeta_n dx \leq C_1 \|S_1-S_2\|_{L^2}^2 + C_2 \|I_1-I_2\|_{L^2}^2 + C_3 \|R_1-R_2\|_{L^2}^2,\\
%     \frac{d}{dt} \int_{\mathbb R^2 } (R_1-R_2)^2 \zeta_n dx \leq C_1 \|S_1-S_2\|_{L^2}^2 + C_2 \|I_1-I_2\|_{L^2}^2 + C_3 \|R_1-R_2\|_{L^2}^2.
% \end{gathered}
% $$
\begin{align*}
    & \frac{d}{dt} \int_{\mathbb R^2 } ((S_1-S_2)^2 + (I_1-I_2)^2 + (R_1-R_2)^2) \zeta_n dx \\
    & \ \leq C \int_{\mathbb R^2 } ((S_1-S_2)^2 + (I_1-I_2)^2 + (R_1-R_2)^2) dx.
\end{align*}
By integrating in time and by letting $n\rightarrow+\infty$, using Gronwall's inequality, we finally obtain
% Equivalently we have that
% $$
% \begin{gathered}
%      \int_{\mathbb R^2 } ((S_1(t)-S_2(t))^2 + (I_1(t)-I_2(t))^2 + (R_1(t)-R_2(t))^2) \zeta_n dx - \int_{\mathbb R^2 } ((S_1(0)-S_2(0))^2 + (I_1(0)-I_2(0))^2 + (R_1(0)-R_2(0))^2) \zeta_n dx\\
%      \leq C \int_0^t \int_{\mathbb R^2 } ((S_1(s)-S_2(s))^2 + (I_1(s)-I_2(s))^2 + (R_1(s)-R_2(s))^2) dx ds.
% \end{gathered}
% $$
% Using the Monotone Convergence Theorem, let $n\rightarrow \infty$.
% $$
% \begin{gathered}
%      \int_{\mathbb R^2 } ((S_1(t)-S_2(t))^2 + (I_1(t)-I_2(t))^2 + (R_1(t)-R_2(t))^2) dx \leq \int_{\mathbb R^2 } ((S_1(0)-S_2(0))^2 + (I_1(0)-I_2(0))^2 + (R_1(0)-R_2(0))^2) dx\\
%      + C \int_0^t \int_{\mathbb R^2 } ((S_1(s)-S_2(s))^2 + (I_1(s)-I_2(s))^2 + (R_1(s)-R_2(s))^2) dx ds.
% \end{gathered}
% $$
% Now we can use Gronwall's inequality to gain the following inequality:
\begin{align*}
    & \int_{\mathbb R^2 } ((S_1-S_2)^2 + (I_1-I_2)^2 + (R_1-R_2)^2) dx\\
   & \  \leq e^{Ct} \int_{\mathbb R^2 } ((S_1(0)-S_2(0))^2 + (I_1(0)-I_2(0))^2 + (R_1(0)-R_2(0))^2) dx = 0\,,
\end{align*}
which implies $(S_1,I_1,R_1)=(S_2,I_2,R_2)$ almost everywhere on $\R^2\times [0,T]$.
\end{proof}

\section{Steady states}\label{sec:steady}

\subsection{A general discussion}\label{subsec:discussion}
After having solved system \eqref{mainsystem_intro}, we now turn our attention to the existence or non-existence of steady states. First of all, we recall from classical SIR models theory (see e. g. \cite{martcheva}) that the homogeneous SIR model \eqref{eq:SIR} dos not feature any steady state other than the so-called "disease free" equilibrium, in which the infectious compartments equals zero and $S$ stabilises at some positive value $S_\infty>0$. Since the total population $N=S+I+R$ is, in this case, preserved in time, the recovered compartment also stabilises to $R_\infty=N-S_\infty$. In this sense, the classical $SIR$ model lacks of asymptotic complexity, in that the only expected long time behavior is the one in which the infectious compartment is cleared in infinite time. One of the many variants of the classical SIR model is the one in which immunity is loss immediately after having recovered from the disease. In this case, the recovered compartment is not considered, and individuals move from $I$ back to $S$ after recovery. The $SIS$ (well-known) model looks like
\begin{equation}\label{eq:SI}
    \begin{dcases}
        \dot{S}=-\beta S I+\alpha I,& \\
        \dot{I}=\beta SI -\alpha I\,. &
    \end{dcases}
\end{equation}
Having defined the \emph{basic reproduction number}
\[\mathcal{R}_0=\frac{\beta N}{\alpha}\,,\]
it is well known that all solutions to \eqref{eq:SI} converge as $t\rightarrow+\infty$ to the \emph{disease free equilibrium} $(S,I)=(N,0)$ in case $\mathcal{R}_0<1$, whereas they converge towards the \emph{endemic equilibrium} $(S,I)=\left(\frac{\alpha}{\beta},N-\frac{\alpha}{\beta}\right)$. We recall that the total population $N=S+I$ is conserved in time.
The example of model \eqref{eq:SI} is paradigmatic of a huge variety of situations in epidemiological models, in which more compartments may be added (exposed, asymptomatic, quarantined, vaccinated, etc., not to mention models with multiple strains or with more than one susceptible classe), in which the transition terms may be complemented with migration terms and natural death terms, and in which the incidence terms (those responsible for transitions to infectious compartments) may be normalised by the total populations. In most of said models, a basic reproduction number similar to the above $\mathcal{R}_0$ may be defined which acts as a threshold with respect to the long time behavior. Typically, for $\mathcal{R}_0<1$ solutions stabilise around a disease free equilibrium, whereas for $\mathcal{R}_0>1$ solutions converge towards an endemic equilibrium with a non trivial value for the infectious compartment.

The presence of spatially depending nonlocal terms in the spirit of \eqref{mainsystem_intro} makes the situation quite different. First of all, with the extent of the generality of \eqref{mainsystem_intro}, the total population density $N(x,t)=S(x,t)+I(x,t)+R(x,t)$ is not "pointwise" preserved. In a situation such as \eqref{mainsystem}, where all "reaction" terms in the system balance, the total population
\[\int_\R N(x,t) dx\]
is indeed preserved in time, as we proved in Lemma \ref{lem:conservation}. As for the density $N(x,t)$ of the total population, a significant case in which one may decouple $N$ from the rest of the system is the one in which all interaction kernels coincide. In the case of the SIR model \eqref{mainsystem_intro} this means
\begin{equation}\label{eq:W_allthesame}
    W_{\xi\eta}=W\qquad \hbox{for all $\xi,\eta\in \{S,I,R\}$}\,.
\end{equation}
In that case, by adding up the three equations in \eqref{mainsystem_intro}, we get the decoupled equation
\begin{equation}\label{eq:N}
    N_t = \mathrm{div}(N \nabla W\ast N)\,.
\end{equation}
As mentioned in the introduction, we stress that there are valid reasons to consider cases in which \eqref{eq:W_allthesame} is not satisfied, for example if one wants to temper the mutual attraction between individuals according to their compartments (e.g. by assuming that individuals of the $I$ class are subject to a wider repulsive range compared to other compartments). The study of the existence of possible steady states becomes in this case more challenging and will be postponed to future studies.

In this section we shall restrict to the case in which \eqref{eq:W_allthesame} is satisfied. As mentioned in the introduction, the interaction potential $W$ is responsible for attractive and/or repulsive drifts among individuals. when the potential $W$ is smooth and attractive, namely $W(x)=w(|x|)$ with $w$ increasing on $[0,+\infty)$, the solution $N$ to \eqref{eq:N} tends to concentrate to a Dirac delta with total mass given by the mass of the initial condition $N_0$, centered at the center of mass $N_0$. In this case, the only possible steady state for $N$ would be a Dirac delta, which is incompatible with the other terms of the system \eqref{mainsystem_intro}. If, on the other hand, $W$ is repulsive, that is $w$ decreases on $[0,+\infty)$, then the solution $N$ is expected to spread on $\R^2$, possibly reaching the zero state in infinite time. This situation would be incompatible with the existence of any steady state other than zero. A case of particular interest, which often gives rise to the existence of nontrivial steady states, is the one in which $w$ decreases on some interval $[0,\lambda)$ and increases on $(\lambda+\infty)$, that is the so-called \emph{repulsive-attractive} case. We shall deal with this case in the next subsection and provide, at least in a very specific case for $W$, a way to detect steady states.

\subsection{A specific repulsive-attractive potential}\label{subsec:example}
We restrict for simplicity to the one-space dimensional case and we set
\begin{equation}\label{eq:W_rep_att}
    W(x)= x^2-\gamma|x|
\end{equation}
for some constant $\gamma>0$. The above potential is attractive for $|x|\geq \gamma/2$ and repulsive for $|x|\leq \gamma/2$. We consider then the one-dimensional SIS model
\begin{align}
   & \partial_t S = \partial_x (S W'*(S+I)) -\beta SI +\alpha I \,,\nonumber \\
      & \partial_t I = \partial_x (I W'*(S+I)) +\beta SI -\alpha I\,.\label{eq:SIS_space}
\end{align}
We are looking for steady states of \eqref{eq:SIS_space}, therefore we want to find $S=S(x)$ and $I=I(x)$ solving
\begin{align}
   & 0= \partial_x (S W'*(S+I)) -\beta SI +\alpha I\,, \nonumber \\
      & 0 = \partial_x (I W'*(S+I)) +\beta SI -\alpha I\,.\label{eq:SIS_stat}
\end{align}
We set $N=S+I$ and
\[M=\int_\R N(x) dx>0\,.\]
Since \eqref{eq:SIS_stat} is invariant by space translations, we also assume that the total population has zero center of mass
\[\int_\R y N(y) dy =0\,.\]
By taking the sum of the two equations in \eqref{eq:SIS_stat}, we get that $N$ satisfies the equations
\begin{equation}\label{eq:steady_nonloc}
    W'\ast N = 0\,,\qquad \hbox{on $\mathrm{supp}(N)$}\,.
\end{equation}
To solve \eqref{eq:steady_nonloc}, let us set
\[F(x)=\int_{-\infty}^x N(y) dy\,.\]
The equation \eqref{eq:steady_nonloc} may be re-written as
\begin{align*}
    & 0 = 2\int_\R (x-y)N(y) dy -\gamma\int_\R \mathrm{sign}(x-y) N(y) dy\\
    & \ = 2x M - \gamma\int_{-\infty}^x N(y) dy +\gamma\int_x^{+\infty} N(y) dy\\
    & \ = 2x M- \gamma F(x)+ \gamma M-\gamma F(x)=M(\gamma+2x)-2\gamma F(x)\,,
\end{align*}
which yields
\[F(x)=\frac{M(\gamma+2x)}{2\gamma}\,.\]
By differentiating with respect to $x$, we get
\[N(x)=\frac{M}{\gamma}\mathbf{1}_{[-\gamma/2,\gamma/2]}(x)\,.\]
% Now, by setting
% \[
% N_\infty(x)= M\mathbf{1}_{[-1/2,1/2]}(x)\,,
% \]
% we have that $N_\infty$ satisfies on the thole $\R$ the equation
% \[N_\infty(x) W'\ast N_\infty (x)= 0\,.\]
% Now, let us consider for simplicity the nonlocal version of the SI model \eqref{eq:SI} with one interaction potential $W$ given by \eqref{eq:W_rep_att}, namely
% \begin{equation}\label{eq:SI_nonloc}
%     \begin{dcases}
%         S_t = (S(W'\ast (S+I)))_x -\beta SI +\alpha I & \\
%         I_t = (I(W'\ast (S+I)))_x +\beta SI -\alpha I\,. &
%     \end{dcases}
% \end{equation}
Recalling
\[S(x)+I(x)=N(x)\,,\]
we assume both $S$ and $I$ are supported on the interval $[-\gamma/2,\gamma/2]$. Since $N$ is constant on said interval, we may assume that $S$ and $I$ are also constant and perform the classical
analysis of the steady states in \eqref{eq:SI} in order to detect these constants. We therefore set
\[S(x)=S_\infty \mathbf{1}_{[-\gamma/2,\gamma/2]}(x)\,,\qquad I(x)=I_\infty \mathbf{1}_{[-\gamma/2,\gamma/2]}(x),\]
and prescribe the stationary condition on the reaction terms
\[I_\infty(\beta S_\infty-\alpha)=0\,,\]
which yields the \emph{disease free} solution
\[S(x)= \frac{M}{\gamma}\mathbf{1}_{[-\gamma/2,\gamma/2]}(x)\,,\qquad I_\infty(x)=0\]
and the \emph{endemic equilibrium} solution
\[S_\infty(x)=\frac{\alpha}{\beta}\mathbf{1}_{[-\gamma/2,\gamma/2]}(x)\,,\qquad I_\infty(x)=\left(\frac{M}{\gamma}-\frac{\alpha}{\beta}\right)\mathbf{1}_{[-\gamma/2,\gamma/2]}(x)\,.\]
Clearly, the latter exists if and only if
\[\frac{M}{\gamma}>\frac{\alpha}{\beta}\,,\]
which is equivalent to requiring that the (space dependent) basic reproduction number
\begin{equation}\label{eq:new_R0}
    \mathcal{R}_0=\frac{M\beta}{\gamma\alpha}
\end{equation}
is strictly larger than $1$. We have therefore proven the following result.

\begin{Theorem}[Steady states]\label{thm:states}
    The stationary system \eqref{eq:SIS_stat} with $W$ given by \eqref{eq:W_rep_att} with $\gamma>0$ has the one-parameter family of solutions
    \[S(x)=\frac{M}{\gamma}\mathbf{1}_{[-\gamma/2,\gamma/2]}(x)\,,\qquad I(x)=0\]
    for all $M\geq 0$. Moreover, \eqref{eq:SIS_stat} has the additional one-parameter family of solutions
    \[S(x)=\frac{\alpha}{\beta}\mathbf{1}_{[-\gamma/2,\gamma/2]}(x)\,,\qquad I(x)=\left(\frac{M}{\gamma}-\frac{\alpha}{\beta}\right)\mathbf{1}_{[-\gamma/2,\gamma/2]}(x)\]
    for all
    \[M\in \left(\frac{\alpha \gamma}{\beta},+\infty\right)\,.\]
\end{Theorem}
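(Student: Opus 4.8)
The plan is to prove the theorem by direct verification, since the explicit profiles are already singled out by the computation preceding the statement. The one computation that does the real work is the identity $(W'\ast N)(x)=0$ for $x\in[-\gamma/2,\gamma/2]$, valid for the common profile $N=\frac{M}{\gamma}\mathbf{1}_{[-\gamma/2,\gamma/2]}$. Using $W'(x)=2x-\gamma\,\mathrm{sign}(x)$, I would record this by splitting the convolution: the quadratic part yields $2xM$ (the first moment of $N$ vanishing by symmetry of the profile), while the sign part yields exactly $-2xM$ on the interval, so the two cancel. I would also note that $W'\ast N$ is continuous on all of $\R$ and vanishes precisely on $\mathrm{supp}(N)=[-\gamma/2,\gamma/2]$, being affine outside it.

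With this in hand, the verification splits into the two families. For the disease-free family I set $I\equiv0$ and $S=N$; then both reaction terms vanish trivially since $I=0$, while the transport term $\partial_x(S\,W'\ast N)$ vanishes because the flux $S\,(W'\ast N)$ is identically zero, $S$ being supported exactly where $W'\ast N=0$. For the endemic family I take $S=\frac{\alpha}{\beta}\mathbf{1}_{[-\gamma/2,\gamma/2]}$ and $I=\bigl(\frac{M}{\gamma}-\frac{\alpha}{\beta}\bigr)\mathbf{1}_{[-\gamma/2,\gamma/2]}$, so that $S+I=N$ has the same profile as before and both nonlocal fluxes $S\,(W'\ast N)$ and $I\,(W'\ast N)$ are again identically zero. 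On the support one has $\beta S=\alpha$, hence $-\beta SI+\alpha I=0$ and $\beta SI-\alpha I=0$, so both equations in \eqref{eq:SIS_stat} hold; off the support every term vanishes. The admissibility condition is then just nonnegativity of $I$: the amplitude $\frac{M}{\gamma}-\frac{\alpha}{\beta}$ is positive (and $I\not\equiv0$) exactly when $M>\frac{\alpha\gamma}{\beta}$, equivalently $\mathcal{R}_0=\frac{M\beta}{\gamma\alpha}>1$.

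The only delicate point, and the step I would treat most carefully, is the distributional meaning of the transport terms, since $S$ and $I$ are discontinuous indicator functions. The clean observation that avoids any boundary contribution is that the nonlocal flux is identically zero rather than merely constant: because $W'\ast N$ is continuous and vanishes exactly on the support of the indicators, the products $S\,(W'\ast N)$ and $I\,(W'\ast N)$ are the zero function, so their distributional derivatives are genuinely zero and no jump terms appear at $x=\pm\gamma/2$. I would state this explicitly to justify that the discontinuities of $S$ and $I$ do not generate spurious singular contributions, after which the theorem follows at once.
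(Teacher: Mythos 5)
Your proof is correct, and it runs the paper's argument in the opposite direction: the paper \emph{derives} the profiles, while you \emph{verify} them. In the paper, one sums the two equations of \eqref{eq:SIS_stat} to get $W'\ast N=0$ on $\mathrm{supp}(N)$, solves this via the primitive $F(x)=\int_{-\infty}^x N(y)\,dy$ to force $N=\frac{M}{\gamma}\mathbf{1}_{[-\gamma/2,\gamma/2]}$ (under a zero-center-of-mass normalisation and the implicit assumptions that $\mathrm{supp}(N)$ is an interval and that $S$, $I$ are constant there), and then imposes $I_\infty(\beta S_\infty-\alpha)=0$ to read off the two families and the threshold $\mathcal{R}_0>1$. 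Your computation of $W'\ast N$ --- splitting into the quadratic part $2xM$ and the sign part $-2xM$ via $F$ --- is exactly the paper's computation, just used as a check rather than to solve for $F$. What your route buys is twofold. First, since the theorem only asserts that the displayed profiles \emph{are} solutions, direct verification is fully sufficient and dispenses with the normalisations and ansatz steps of the derivation. Second, and more substantively, you make explicit the one genuinely delicate point that the paper passes over: the transport terms involve derivatives of discontinuous indicator functions, and you justify their vanishing by observing that $W'\ast N$ is continuous, vanishes exactly on $[-\gamma/2,\gamma/2]$ (and is affine and nonzero outside, for $M>0$), so the fluxes $S\,(W'\ast N)$ and $I\,(W'\ast N)$ are identically zero functions and no jump contributions arise at $x=\pm\gamma/2$ in the distributional sense. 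What the paper's route buys instead is explanatory and quasi-classification content --- it shows \emph{why} the plateau profile of width $\gamma$ is forced by $W'\ast N=0$ on the support --- which your argument does not attempt, but which the statement of the theorem does not claim either.
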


\begin{Remark}
    \emph{We notice that our $\mathcal{R}_0$ in \eqref{eq:new_R0} involves the parameter $\gamma>0$, which is proportional to the repulsive range in the nonlocal interaction terms of \eqref{eq:SIS_space}. As it turns out from the condition $\mathcal{R}_0>1$, the existence of the endemic equilibrium is more likely to hold for small $\gamma$ and less likely to hold for large $\gamma$. This is not surprising. Indeed, large $\gamma$ means a larger repulsive range, that is, individuals are less likely to get attracted. This is somehow impacting the incidence. A large value of $\gamma$ somehow tempers the value of the incidence rate $\beta$ and reduces the chance of transmission of the disease from susceptible to infectious individuals.}
\end{Remark}

The method we illustrated in this subsection may be extended to more complicated models (possibly with more compartments) in which the mass of the total population is constant in time.
Indeed, when $W$ is given as in \eqref{eq:W_rep_att} we may obtain a constant total population on some interval, which allows us to consider the same constant states as in the corresponding homogeneous case.
\begin{figure}[hbt!]
    \centering
    \includegraphics[width=0.48\textwidth]{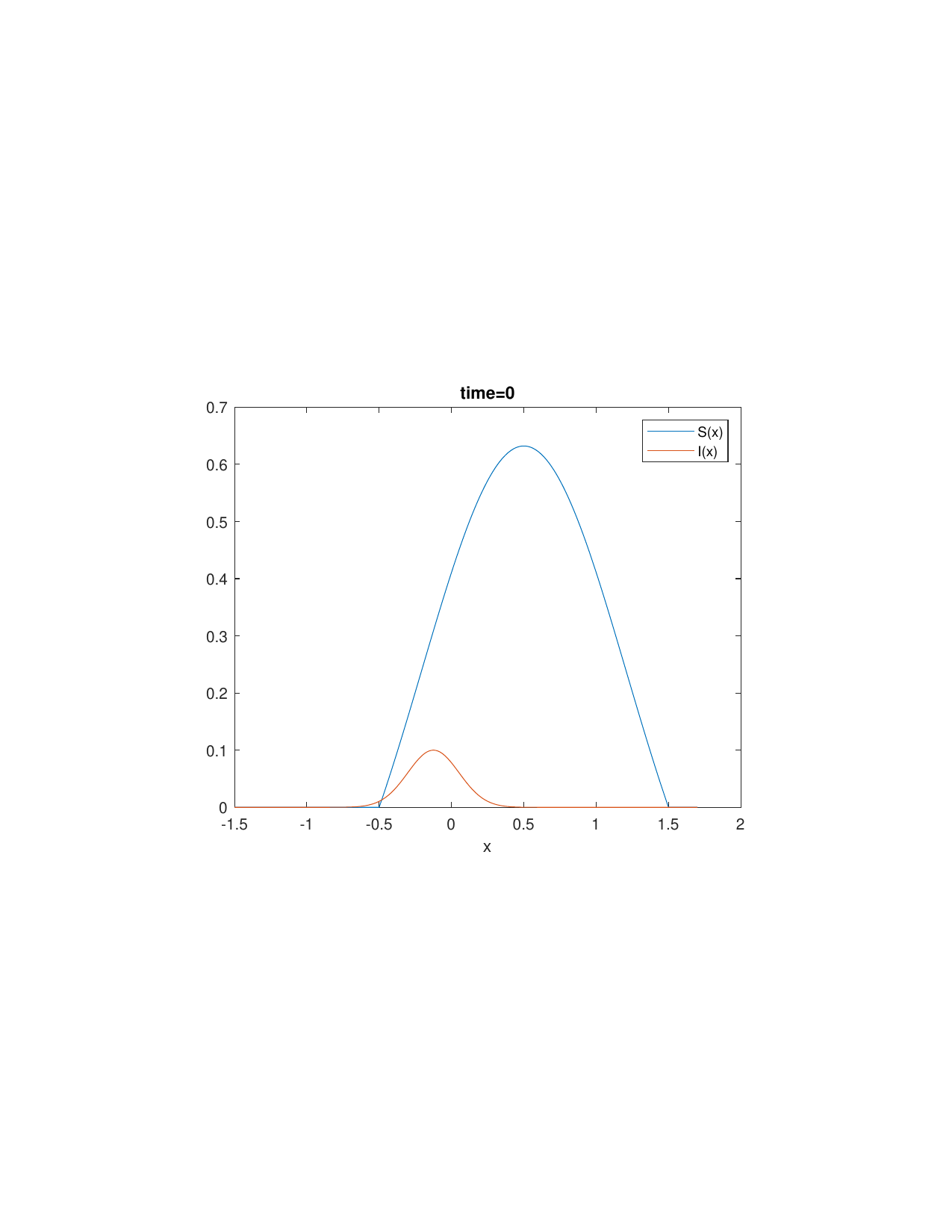}
    \includegraphics[width=0.48\textwidth]{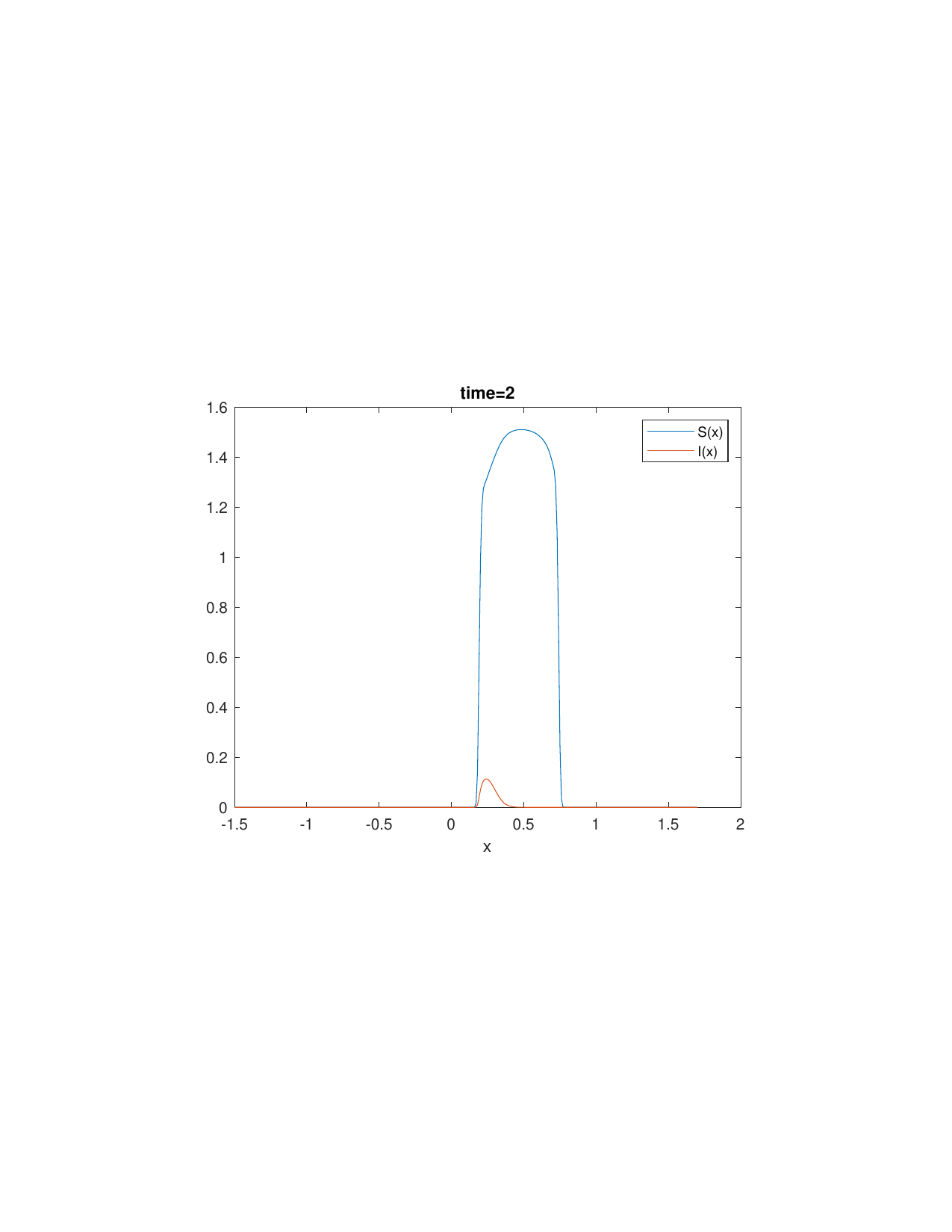}
    \includegraphics[width=0.48\textwidth]{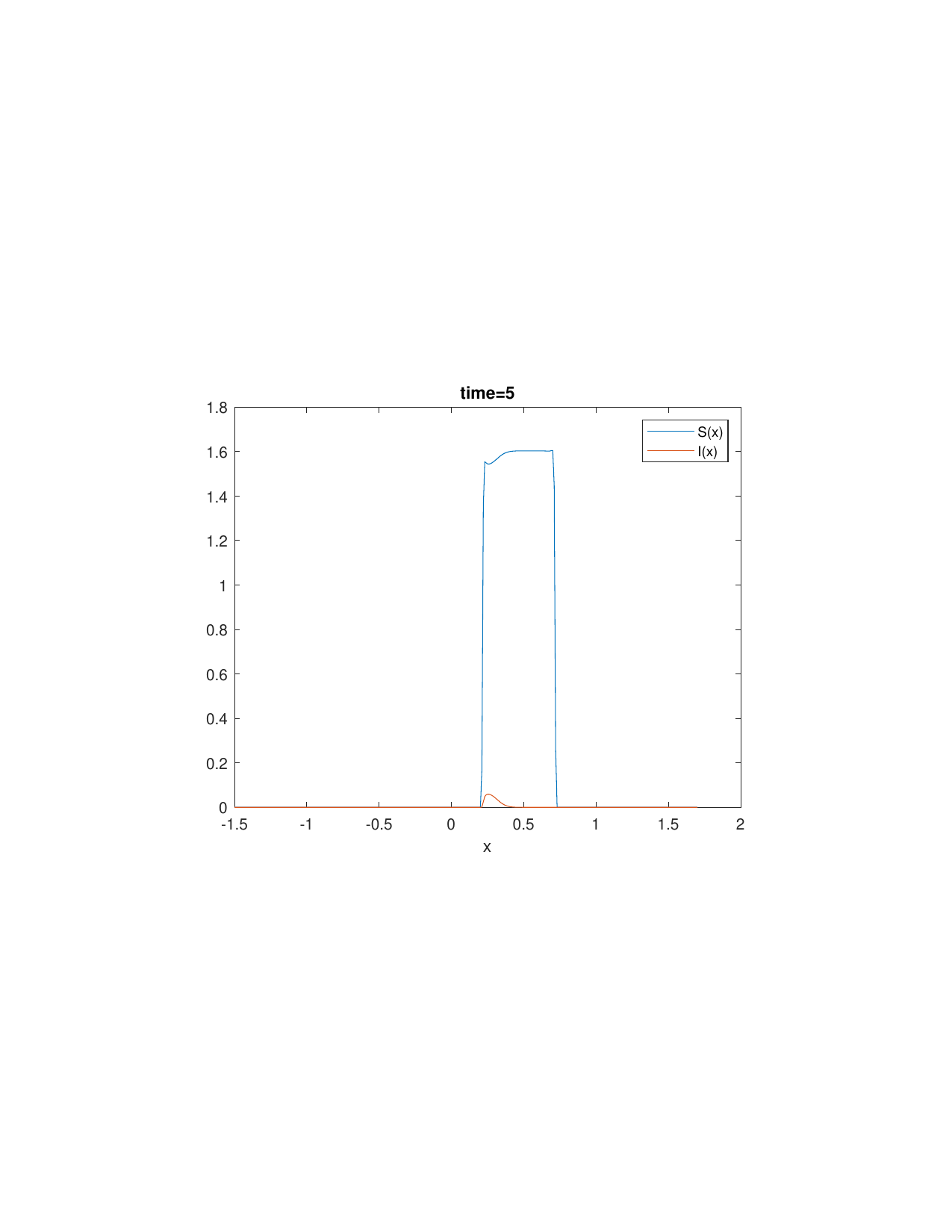}
    \includegraphics[width=0.48\textwidth]{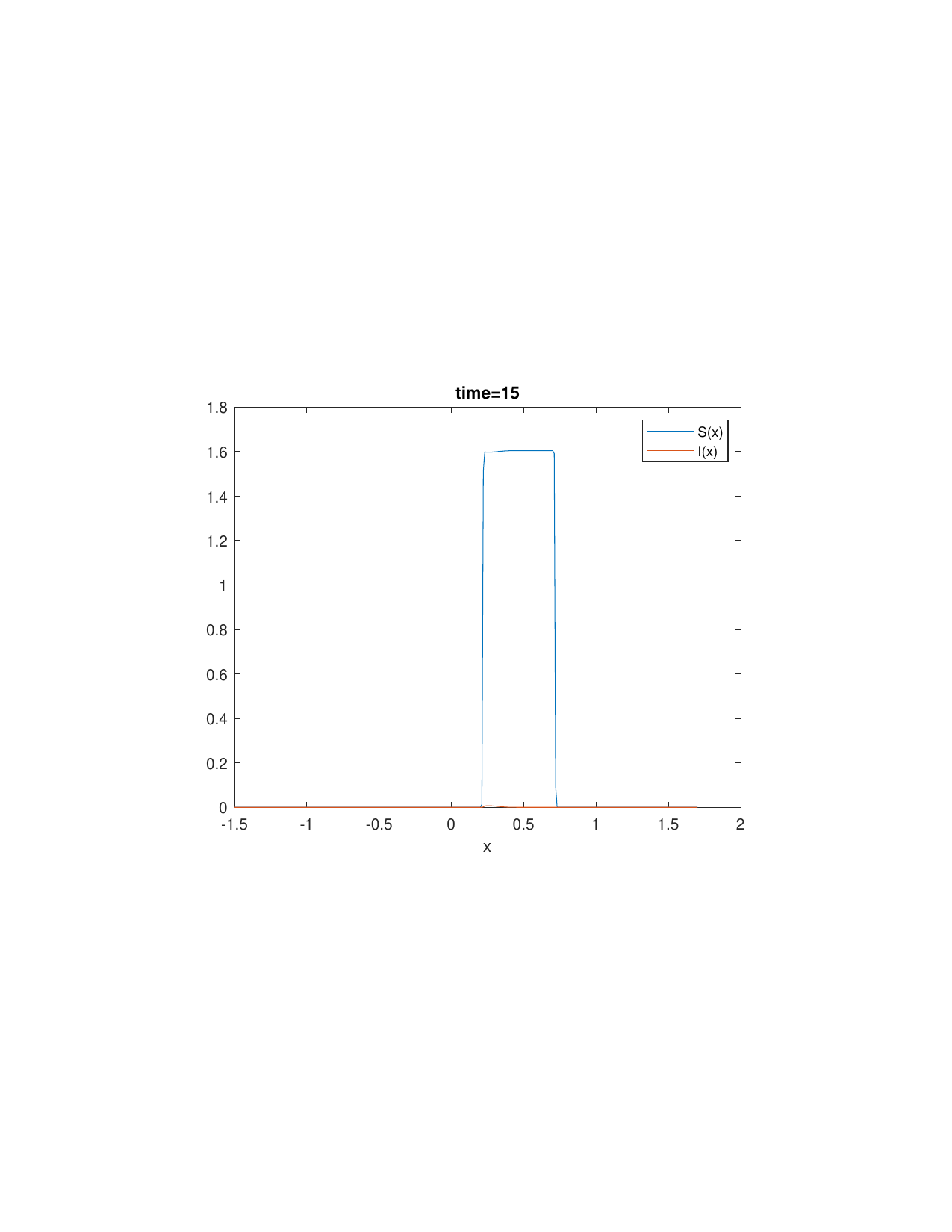}
    \caption{This figure shows the evolution of the epidemic in time, when $\mathcal{R}_0<1$, $\beta=0.5$. Here, the number of infected individuals is decreasing in time. Eventually, there are no infected individuals left, and we reach the disease-free equilibrium, and $S$ is supported in an interval of size $\gamma$.}
    \label{fig1}
\end{figure}

\begin{figure}[hbt!]
    \centering
    \includegraphics[width=0.48\textwidth]{t0.pdf}
    \includegraphics[width=0.48\textwidth]{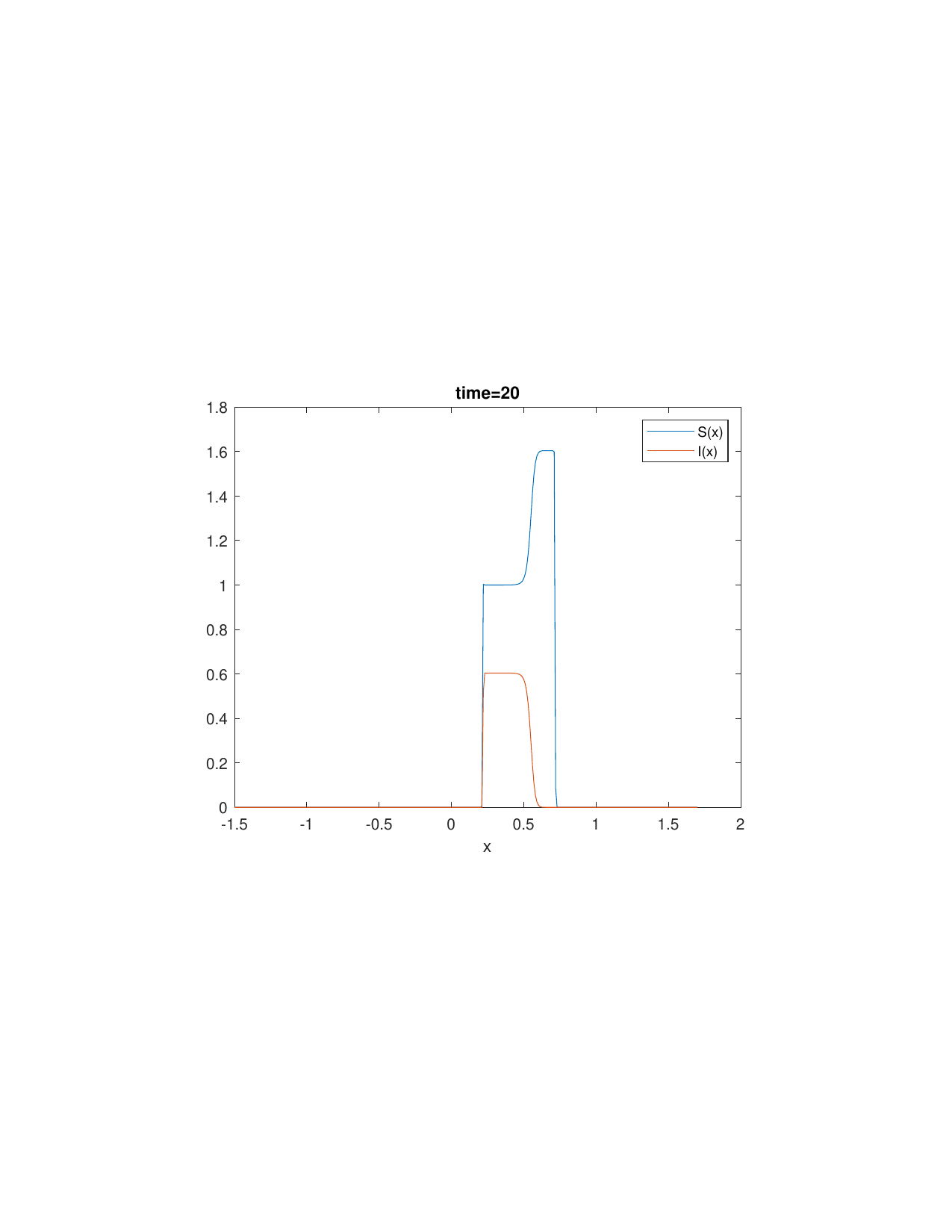}
    \includegraphics[width=0.48\textwidth]{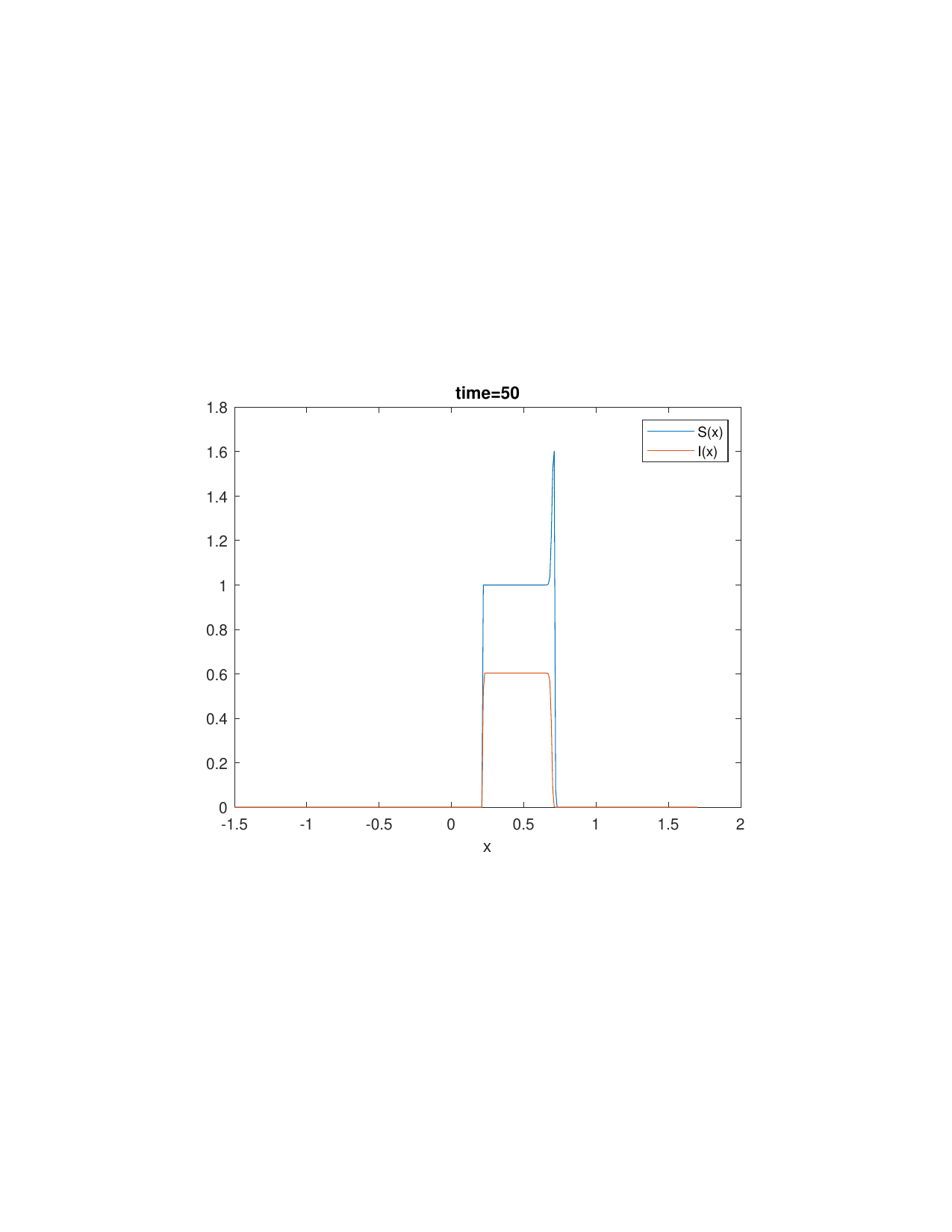}
    \includegraphics[width=0.48\textwidth]{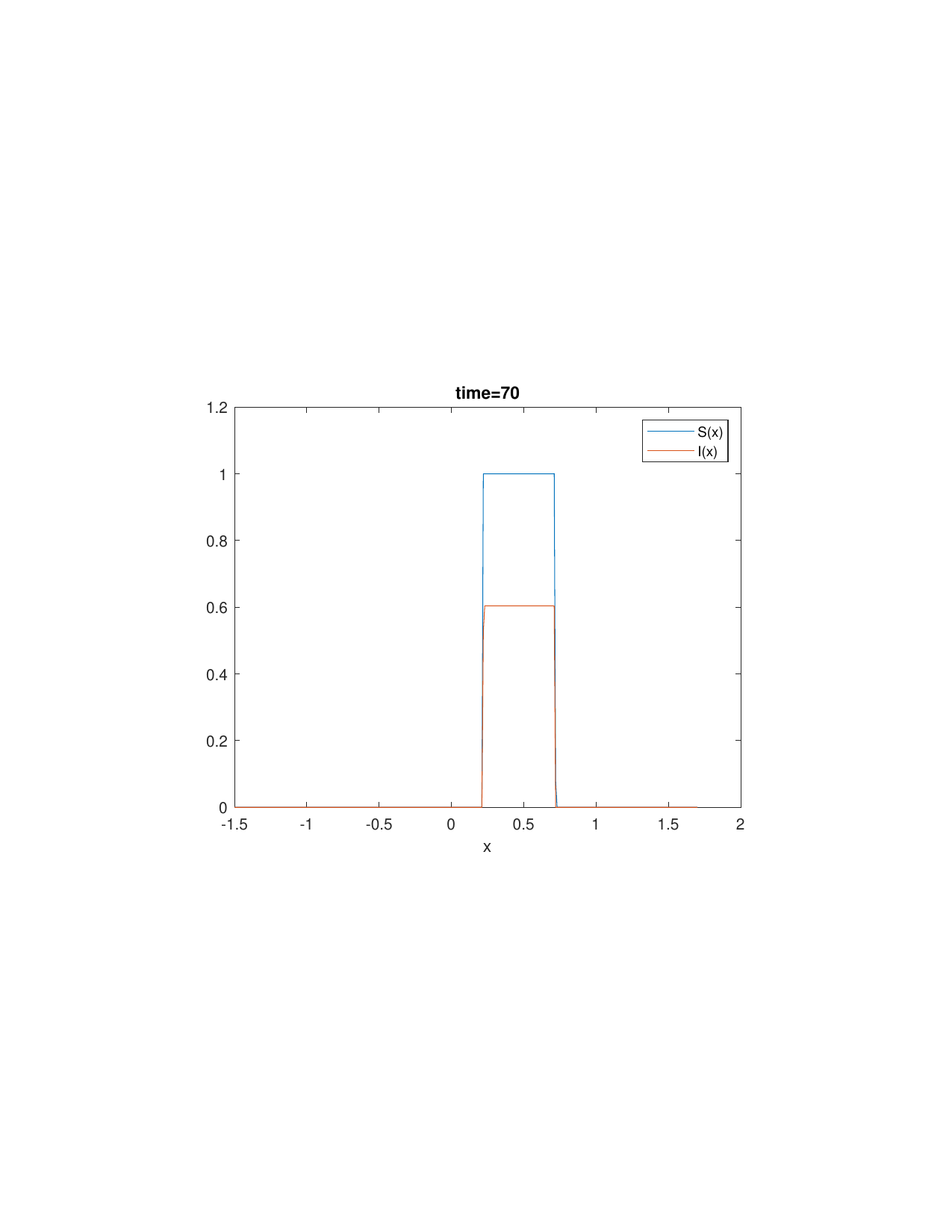}
    \caption{This Figure shows the evolution of the epidemic in time, when $\mathcal{R}_0>1$, $\beta=1$. Here, $I$ grows in time and spreads until both $S$ and $I$ are supported in the same interval of size $\gamma$, and they are both constant in this interval.}
    \label{fig2}
\end{figure}

\section{Numerical Simulations}\label{sec:numerical}

In this section, we provide some simple numerical simulations for the SIS model \eqref{eq:SIS_space}. We mainly consider two examples for this model, for which the former is converging to a disease-free equilibrium and the latter is converging to an endemic equilibrium. In these simulations, we are using the one-dimensional finite volume scheme introduced in \cite{Carrillo_Chertock} and Runge-Kutta method, considering the same initial values $S_0$ and $I_0$ where $I_0$ is supported in a relatively small interval with a smaller value in population density in comparison to $S_0$. In these simulations we are considering an equidistant discretization for space and time with the step size $\Delta x =0.01$ and $\Delta t=0.001$ and zero Dirichlet boundary condition. The domain considered for space is $[-1.7,1.7]$, while the time domain is as long as necessary for reaching the stationary state (for the first simulation $T= 15$ and the second simulation $T=70$). We set $\alpha=1$ and $\gamma = 0.5$. To create the two equilibria based on the analysis in the previous section we need respectively $\mathcal{R}_0>1$ and $\mathcal{R}_0<1$, where the value of $\beta$  is changing to satisfy these two inequalities. In Figure \ref{fig1}, we let  $\beta=0.5$ which leads to $\mathcal{R}_0<1$ and disease-free equilibrium, and in Figure \ref{fig2} we have $\beta=1$, which leads to the other inequality and endemic equilibrium. To further observe the behavior of the system, unlike the previous section, we do not assume the center of the mass to be zero. But regardless, we can see that in both cases, the population aggregation is such that soon both $S$ and $I$ are supported in an interval of length $\gamma$.

% \section{Future perspectives}

% First: consider more general models than SIR with nonlocal transport, for example SIRS, SEIR, SEIRS, multiple strains, etc. Do numerics (finite volumes) and compare to the classical models without space dependency and with those with diagonal diffusion.

% Alternatively: start from the paper \cite{dimarco_perthame_toscani_zanella}, specifically system (3) therein, add nonlocal transport terms, and compare to standard approach similarly to what they do.

% Second: consider a modified version \begin{equation}\label{mainsystem_intro1}
%     \left\{\begin{array}{l}
% S_{t}=\operatorname{div}\left(S \nabla V_{S}[S, I, R]\right)-\beta S (I\ast G_\delta) \\
% I_{t}=\operatorname{div}\left(I \nabla V_{I}[S, I, R]\right)+\beta (S\ast G_\delta) I-\alpha I \\
% R_{t}=\operatorname{div}\left(R \nabla V_{R}[S, I, R]\right)+\alpha I
% \end{array}\right.
% \end{equation}
% and the more complicated versions mentioned above. Here, do particle method (with some extent of randomness?) to approximate \eqref{mainsystem_intro1}. There is actually cancellations of the reaction terms upon integration due to the symmetry of $G_\delta$.
\clearpage
\section*{Acknowledgments}
MDF is partially supported by the Italian “National Centre for HPC, Big Data and Quantum Computing” - Spoke 5 “Environment and Natural Disasters” and by the Ministry of University and Research (MIUR) of Italy under the grant PRIN 2020- Project N. 20204NT8W4, Nonlinear Evolutions PDEs, fluid
dynamics and transport equations: theoretical foundations and applications. FGZ is supported by the PhD project “Non-local deterministic modelling for the diffusion of epidemics”, funded by the Italian PNRR (National Recovery and Resilience Plan), which is part of the Next Generation EU plan (DM
118/2023 M4C1 – Inv. 4.1 -
“Ricerca PNRR” - University of L'Aquila).
This research is also partially supported by the InterMaths Network, \url{www.intermaths.eu}.
%\clearpage

\appendix
\section{}

Here we prove the results claimed in Remark \ref{Rem:localExt}.

\begin{Lemma}
There exists $T> 0$ such that there exists one and only one classical solution $u=(u_1,\ldots,u_N) \in (C^{1,2}_{t,x}(\mathbb R^d \times [0,T]))^N$ to the following system in $\mathbb R^d \times [0,T]$.
    \begin{equation}
        \begin{dcases}
            \partial_t u_i(x,t)=\varepsilon\Delta u_i + \mathrm{div}(u_i \nabla V_i[u_1,\ldots,u_N])+g_i(u_1,\ldots,u_N), & x\in \R^d\,,\,\, t>0\\
            u_i(x,0)=u_i^0(x), &
        \end{dcases}
    \end{equation}
    where $i=1,\ldots,N$, with nonlocal operators of the form
    \[V_i[u_1,\ldots,u_N](x)=\sum_{j=1}^N W_{ij}\ast u_i(x)\,,\]
    provided the interaction kernels $W_{ij}$ satisfy the same assumptions as in \eqref{eq:ass_reg_W}, the functions $g_i:\R ^N\rightarrow \R$ feature $C^1$ regularity, and $u_i^0\in L^1\cap L^\infty$.
\end{Lemma}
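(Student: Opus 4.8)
The plan is to mirror the proof of Proposition \ref{prop:local} almost verbatim, replacing the three SIR compartments by the $N$ species $u_1,\dots,u_N$ and the explicit reaction terms by the general nonlinearities $g_i$. For fixed $r,T>0$ I would work in the Banach space
\[
X_{r,T}:=\Bigl\{u=(u_1,\dots,u_N)\in C\bigl([0,T];(L^1(\R^d)\cap L^\infty(\R^d))^N\bigr):\ \textstyle\sum_{i=1}^N\vertiii{u_i}_T\leq r\Bigr\},
\]
with $\vertiii{u_i}_T=\sup_{0\leq t\leq T}\bigl(\|u_i(\cdot,t)\|_{L^\infty}+\|u_i(\cdot,t)\|_{L^1}\bigr)$, and define $\mathcal{T}=(\mathcal{T}_1,\dots,\mathcal{T}_N)$ through the Duhamel representation
\[
\mathcal{T}_i(\tilde u)(\cdot,t)=G_\varepsilon(\cdot,t)*u_i^0+\int_0^t G_\varepsilon(\cdot,t-s)*\bigl(\mathrm{div}(\tilde u_i\nabla V_i[\tilde u])+g_i(\tilde u)\bigr)(\cdot,s)\,ds,
\]
where $G_\varepsilon$ is now the $d$-dimensional heat kernel. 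The only kernel estimate needed beyond mass conservation, namely $\|\nabla G_\varepsilon(t)\|_{L^1(\R^d)}=Ct^{-1/2}$, remains valid in every dimension after the change of variables $y=x/\sqrt{4\varepsilon t}$, so all the estimates from Proposition \ref{prop:local} that move a derivative onto the kernel carry over unchanged.

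To show $\mathcal{T}$ maps $X_{r,T}$ into itself I would estimate $\|\mathcal{T}_i(\tilde u)\|_{L^p}$ for $p\in\{1,+\infty\}$ exactly as before: Young's inequality bounds $G_\varepsilon*u_i^0$ by $\|u_i^0\|_{L^p}$; the divergence term is integrated by parts into $\nabla G_\varepsilon*(\tilde u_i\nabla V_i[\tilde u])$ and estimated by $\int_0^t\|\nabla G_\varepsilon(t-s)\|_{L^1}\|\tilde u_i\|_{L^p}\|\nabla V_i[\tilde u]\|_{L^\infty}\,ds$, with $\|\nabla V_i[\tilde u]\|_{L^\infty}\leq Cr$ from Lemma \ref{lem:nonlocal_etsimates} and the $t^{-1/2}$ singularity producing a harmless factor $T^{1/2}$. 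The genuinely new point is the reaction term: since each $g_i$ is $C^1$ and the $L^\infty$ constraint confines $\tilde u$ to the fixed compact set $\{|\xi|\leq Cr\}$, $g_i$ is Lipschitz there with constant $L(r):=\sup_{|\xi|\leq Cr}|\nabla g_i(\xi)|$, whence $\|g_i(\tilde u)\|_{L^p}\leq L(r)\sum_j\|\tilde u_j\|_{L^p}$; combined with $\|G_\varepsilon(t-s)\|_{L^1}=1$ this contributes a term of order $L(r)\,rT$. Summing over $i$ yields a bound of the same shape as \eqref{eq:estimate1}, namely $\vertiii{\mathcal{T}(\tilde u)}_T\leq C_0+C(r)(T^{1/2}+T)$, where $C_0$ collects the $T$-independent data norms $\sum_i\|u_i^0\|_{L^1\cap L^\infty}$ and $C(r)$ now depends on $r$ also through $L(r)$.

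The contraction estimate is handled the same way. For $\tilde u,\tilde v\in X_{r,T}$ the divergence contribution is split using the bilinear identities already exploited in Proposition \ref{prop:local} together with $\|\nabla V_i[\tilde u]-\nabla V_i[\tilde v]\|_{L^\infty}\leq C\sum_j\|\tilde u_j-\tilde v_j\|_{L^1}$, while the reaction contribution is controlled by the mean value theorem, $|g_i(\tilde u)-g_i(\tilde v)|\leq L(r)\sum_j|\tilde u_j-\tilde v_j|$, again because both arguments stay in $\{|\xi|\leq Cr\}$. This gives $\vertiii{\mathcal{T}(\tilde u)-\mathcal{T}(\tilde v)}_T\leq C(r)(T^{1/2}+T)\vertiii{\tilde u-\tilde v}_T$, matching \eqref{eq;estimate2} up to the $r$-dependence of the constant. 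Since $r$ is chosen first, large enough that $C_0\leq r/2$, the constants $C(r)$ and $L(r)$ are then frozen, and taking $T>0$ small makes $\mathcal{T}$ simultaneously a self-map and a strict contraction; Banach's fixed point theorem gives a unique fixed point, which is the unique classical solution by Duhamel's principle.

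The main obstacle is exactly the passage from the explicit reaction structure to a general $C^1$ nonlinearity: in Proposition \ref{prop:local} the product form $\beta SI$ was split as $\|\tilde S\|_{L^\infty}\|\tilde I\|_{L^p}$, whereas here one must instead rely on the local Lipschitz bound afforded by $C^1$ regularity, available only because the $L^\infty$ part of the $X_{r,T}$-norm keeps the arguments of $g_i$ in a bounded set. The one subtlety worth flagging is that $g_i(\tilde u)\in L^1$ essentially forces $g_i(0,\dots,0)=0$, since otherwise $g_i(\tilde u)$ tends to the nonzero constant $g_i(0)$ at spatial infinity and fails to be integrable; this vanishing holds automatically for the epidemic-type nonlinearities we have in mind and should be recorded as a standing assumption, after which all remaining estimates are routine.
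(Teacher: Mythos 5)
Your proposal follows essentially the same route as the paper's appendix proof: the same Banach space $X_{r,T}$, the same Duhamel map $\mathcal{T}$, Young's inequality together with the heat-kernel bound $\|\nabla G_\varepsilon(\cdot,t)\|_{L^1(\R^d)}=Ct^{-1/2}$ for the divergence term, Lemma \ref{lem:nonlocal_etsimates} for the nonlocal velocities, and the local Lipschitz bound on the compact set $\{|\xi|\leq Cr\}$ (equivalently the mean value theorem, as the paper phrases it) for both the self-map and contraction estimates on the $g_i$ terms. Your observation that $g_i(0,\ldots,0)=0$ is effectively required for $g_i(\tilde u)\in L^1(\R^d)$ is a correct refinement of a point the paper's own estimate $\|g(u)\|_{L^1}\leq L\|u\|_{L^1}+C$ silently glosses over (its constant $C$ is finite for $p=1$ only under that same implicit assumption), but it does not alter the argument.
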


\begin{proof}
Similarly to the proof provided for Theorem \ref{prop:local}, we introduce $\mathcal{T}_1,\ldots \mathcal{T}_N$ which results from Duhamel's principle. To drive this results we need to prove that $\mathcal{T}_i$ is well-defined and a contraction for any $i\in \{1,\ldots,N\}$ in \begin{align*}
    & X_{r, T}:=\left\{\vphantom{\int}(u_1,\dots,u_N) \in C\left([0, T] ; \left(L^{1}\left(\mathbb{R}^{2}\right) \cap L^{\infty}\left(\mathbb{R}^{2}\right)\right)^N\right):\,\Sigma_{i=1}^N\vertiii{u_i}_{T}\leq r\right\}\,,
\end{align*}
Let $p=1,\infty$. Let $u=(u_1,\ldots,u_N) \in X_{r, T}$
\begin{align*}
&\|\mathcal{T}_i(u)\|_{L^p(\mathbb R^d)}\leq \|\mathcal{I}_{i,1}\|_{L^p(\mathbb R^d)}+\|\mathcal{I}_{i,2}\|_{L^p(\mathbb R^d)},\\
&\mathcal{I}_{i,1}=G_\varepsilon\ast u_i^0, \quad \mathcal{I}_{i,2}=\int_0^tG_\varepsilon(.,t-s)\ast \left(\mathrm{div}(u_i \nabla V_i[u_1,\ldots,u_N])+g_i(u_1,\ldots,u_N)\right)(.,s) ds,\\
&\|\mathcal{I}_{i,1}\|_{L^p(\mathbb R^d)}\leq \|u_i^0\|_{L^p(\mathbb R^d)},\\
&\|\mathcal{I}_{i,2}\|_{L^p(\mathbb R^d)}\leq \int _0^t \|g_i(u_1,\ldots,u_N)(.,s)\|_{L^p(\mathbb R^d)} ds\\
&+\int_0^t \|\nabla G_\varepsilon(.,t-s)\|_{L^1(\mathbb R^d)}\|u_i\|_{L^p(\mathbb R^d)}\|\nabla V_i[u_1,\ldots,u_N]\|_{L^{\infty}(\mathbb R^d)}ds.
\end{align*}
We assumed, $u\in X_{r,T}$ which means that $u(x,t)$ is in a bounded subset of $\mathbb R^N$ for all $x\in \mathbb R^d$ and $t\in [0,T]$. On the other hand, $g$ is a $C^1$ function, so it is locally Lipschitz. Let $L$ be the Lipschitz constant of $g$ in the compact set that covers the image of $u$
\begin{align*}
    &|g(u_1,\ldots u_N)|\leq|g(u_1,\ldots u_N)-g(u_1^0,\ldots u_N^0)|+|g(u_1^0,\ldots u_N^0)| \leq L|u-u^0|+|g(u_1^0,\ldots u_N^0)|,\\
    &\|g(u_1,\ldots u_N)\|_{L^p(\mathbb R^d)}\leq L\|u\|_{L^p(\mathbb R^d)}+C,\\
    &\int _0^t \|g_i(u_1,\ldots,u_N)(.,s)\|_{L^p(\mathbb R^d)} ds\leq (Lr+C)t.
\end{align*}
Here, $C$ is a constant dependent on the initial data and the function $g$. The nonlocal term is bounded the same way as proved in \eqref{prop:local}
\[
\begin{gathered}
\|\mathcal{I}_{i,2}\|_{L^p(\mathbb R^d)}\leq (Lr+C_1)t + C_2r \int_0^t (t-s)^{-\frac{1}{2}}\|u_i(.,s)\|_{L^p(\mathbb R^d)} ds,\\
\vertiii{\mathcal{T}_i}_{T}\leq LrT+C_1T+C_2 r^2 T^{\frac{1}{2}},\\
\vertiii{\mathcal{T}}_{T}\leq (LrT+C_1T+C_2 r^2 T^{\frac{1}{2}})N.
\end{gathered}
\]
This operator is well-defined if there exists $T$ and $r$ such that the following inequality is satisfied
\[
(LrT+C_1T+C_2 r^2 T^{\frac{1}{2}})N\leq r.
\]
Now consider $u,\tilde{u}\in X_{r,T}$
\begin{align*}
    & \left\|\mathcal{T}_i(u)-\mathcal{T}_i(\tilde{u})\right\|_{L^p\left(\mathbb{R}^2\right)} \leq \int _0^t \|g_i(u_1,\ldots,u_N)(.,s)-g_i(\tilde u_1,\ldots,\tilde u_N)(.,s)\|_{L^p(\mathbb R^d)} ds\\
&+\int_0^t \|\nabla G_\varepsilon(.,t-s)\|_{L^1(\mathbb R^d)}\|u_i\nabla V_i[u_1,\ldots,u_N]-\tilde u_i\nabla V_i[\tilde u_1,\ldots,\tilde u_N]\|_{L^{p}(\mathbb R^d)}ds.
\end{align*}
We know that $g\in C^1$ so by the mean value theorem, we know there exists a $\bar u$ in between $u$ and $\tilde u$ such that:
\begin{align*}
     &\|g_i(u_1,\ldots,u_N)(.,s)-g_i(\tilde u_1,\ldots,\tilde u_N)(.,s)\|_{L^p(\mathbb R^d)} =\|g'_i(\bar{u}_1,\ldots,\bar{u}_N)(u-\tilde{u})\|_{L^p(\mathbb R^d)}\\
     &\leq \|g'_i(\bar{u}_1,\ldots,\bar{u}_N)\|_{L^\infty(\mathbb R^d)}\|u-\tilde{u}\|_{L^p(\mathbb R^d)} \leq C \|u-\tilde{u}\|_{L^p(\mathbb R^d)},
\end{align*}
where $C$ depends on the function $g$ and $r$.Also, we know that the following inequality holds true:
\begin{align*}
    &\|u_i\nabla V_i[u_1,\ldots,u_N]-\tilde u_i\nabla V_i[\tilde u_1,\ldots,\tilde u_N]\|_{L^{p}(\mathbb R^d)}\\
    &\leq\| u\|_{L^{p}(\mathbb R^d)}\left\| \nabla V_{i}\left[u_1,\dots,u_N\right]-\nabla V_{i}\left[\tilde{u} _1,\dots,\tilde{u}_N\right]\right\|_{L^{\infty}(\mathbb R^d)} +\left\|u-\tilde{u}\right\|_{L^{p}(\mathbb R^d)}\left\| \nabla V_{i}\left[\tilde{u}_1,\ldots,\tilde{u}_N\right] \right\|_{L^{\infty}(\mathbb R^d)}\\
    &\leq C_1r\left\|u-\tilde{u}\right\|_{L^{p}(\mathbb R^d)}+C_2 \sup_{0\leq t\leq T}\left\|u-\tilde{u}\right\|_{L^{1}(\mathbb R^d)}\| u\|_{L^{p}(\mathbb R^d)}.
\end{align*}
Eventually, we have that:
\begin{align*}
    & \left\|\mathcal{T}_1\left(u\right)-\mathcal{T}_1\left(\tilde u\right)\right\|_{L^{p}\left(\mathbb{R}^2\right)} \leq  Ct \vertiii{u-\tilde{u}}_T + C t^{\frac{1}{2}}\left(\vertiii{u-\tilde{u}}_T+r\left(\vertiii{u-\tilde{u}}_{T} \right)\right),\\
    &\vertiii{\mathcal{T}_1\left(u\right)-\mathcal{T}_1\left(\tilde u\right)}_T\leq  \left(Ct + C t^{\frac{1}{2}}\left(1+r \right)\right)\vertiii{u-\tilde{u}}_T.
\end{align*}
Hence, the operator is a contraction, if $C_1t+ C_2 t^{\frac{1}{2}}\left(1+r \right)< 1$. We can easily find $r$ and $T$ to satisfy both of the conditions derived, thus concluding the proof.
\end{proof}

%\bibliographystyle{plain}
%\bibliography{references}

\begin{thebibliography}{10}

\bibitem{albi_et_al}
G.~Albi, G.~Bertaglia, W.~Boscheri, G.~Dimarco, L.~Pareschi, G.~Toscani, and
  M.~Zanella.
\newblock {\em Kinetic Modelling of Epidemic Dynamics: Social Contacts, Control
  with Uncertain Data, and Multiscale Spatial Dynamics}, pages 43--108.
\newblock Springer International Publishing, Cham, 2022.

\bibitem{AGS}
L.~Ambrosio, N.~Gigli, and G.~Savar\'{e}.
\newblock {\em Gradient flows in metric spaces and in the space of probability
  measures}.
\newblock Lectures in Mathematics ETH Z\"{u}rich. Birkh\"{a}user Verlag, Basel,
  second edition, 2008.

\bibitem{anita_capasso}
S.~Anita and V.~Capasso.
\newblock Reaction-diffusion systems in epidemiology.
\newblock {\em An. \c Stiin\c t. Univ. Al. I. Cuza Ia\c si. Mat. (N.S.)},
  66(2):171--196, 2020.

\bibitem{anita_iannelli}
S.~Anita, M.~Iannelli, M.-Y. Kim, and E.-J. Park.
\newblock Optimal harvesting for periodic age-dependent population dynamics.
\newblock {\em SIAM J. Appl. Math.}, 58(5):1648--1666, 1998.

\bibitem{degond}
C.~Appert-Rolland, P.~Degond, and S.~Motsch.
\newblock Two-way multi-lane traffic model for pedestrians in corridors.
\newblock {\em Netw. Heterog. Media}, 6(3):351--381, 2011.

\bibitem{balague}
D.~Balagu\'e, J.~A. Carrillo, T.~Laurent, and G.~Raoul.
\newblock Nonlocal interactions by repulsive-attractive potentials: radial
  ins/stability.
\newblock {\em Phys. D}, 260:5--25, 2013.

\bibitem{bentout_et_al}
S.~Bentout, S.~Djilali, T.~Kuniya, and J.~Wang.
\newblock Mathematical analysis of a vaccination epidemic model with nonlocal
  diffusion.
\newblock {\em Math. Methods Appl. Sci.}, 46(9):10970--10994, 2023.

\bibitem{berestycki_et_al}
H.~Berestycki, J.-M. Roquejoffre, and L.~Rossi.
\newblock Propagation of epidemics along lines with fast diffusion.
\newblock {\em Bull. Math. Biol.}, 83(1):Paper No. 2, 34, 2021.

\bibitem{beretta}
E.~Beretta and Y.~Takeuchi.
\newblock Global stability of an {SIR} epidemic model with time delays.
\newblock {\em J. Math. Biol.}, 33(3):250--260, 1995.

\bibitem{bertaglia_pareschi_toscani}
G.~Bertaglia, L.~Pareschi, and G.~Toscani.
\newblock Modelling contagious viral dynamics: a kinetic approach based on
  mutual utility.
\newblock {\em Math. Biosci. Eng.}, 21(3):4241--4268, 2024.

\bibitem{bertozzi_carrillo_laurent}
A.~Bertozzi, J.~A. Carrillo, and T.~Laurent.
\newblock Blow-up in multidimensional aggregation equations with mildly
  singular interaction kernels.
\newblock {\em Nonlinearity}, 22:683, 02 2009.

\bibitem{bertozzi_laurent}
A.~Bertozzi and T.~Laurent.
\newblock The behavior of solutions of multidimensional aggregation equations
  with mildly singular interaction kernels.
\newblock {\em Chinese Annals of Mathematics Series B - CHIN ANN MATH SER B},
  30:463--482, 09 2009.

\bibitem{BuDiF}
M.~Burger and M.~Di~Francesco.
\newblock Large time behavior of nonlocal aggregation models with nonlinear
  diffusion.
\newblock {\em Netw. Heterog. Media}, 3(4):749--785, 2008.

\bibitem{sorting}
M.~Burger, M.~Di~Francesco, S.~Fagioli, and A.~Stevens.
\newblock Sorting phenomena in a mathematical model for two mutually
  attracting/repelling species.
\newblock {\em SIAM J. Math. Anal.}, 50(3):3210--3250, 2018.

\bibitem{cantrell}
R.~S. Cantrell and C.~Cosner.
\newblock {\em Spatial ecology via reaction-diffusion equations}.
\newblock Wiley Series in Mathematical and Computational Biology. John Wiley \&
  Sons, Ltd., Chichester, 2003.

\bibitem{cao_et_al}
J.-F. Cao, W.-T. Li, J.~Wang, and M.~Zhao.
\newblock The dynamics of a {L}otka-{V}olterra competition model with nonlocal
  diffusion and free boundaries.
\newblock {\em Adv. Differential Equations}, 26(3-4):163--200, 2021.

\bibitem{capasso_anita}
V.~Capasso and S.~Anita.
\newblock The interplay between models and public health policies: regional
  control for a class of spatially structured epidemics (think globally, act
  locally).
\newblock {\em Math. Biosci. Eng.}, 15(1):1--20, 2018.

\bibitem{carrillo_choi_hauray}
J.~A. Carrillo, Y.-P. Choi, and M.~Hauray.
\newblock The derivation of swarming models: mean-field limit and {W}asserstein
  distances.
\newblock In {\em Collective dynamics from bacteria to crowds}, volume 553 of
  {\em CISM Courses and Lect.}, pages 1--46. Springer, Vienna, 2014.

\bibitem{CDFLS}
J.~A. Carrillo, M.~Di~Francesco, A.~Figalli, T.~Laurent, and D.~Slepčev.
\newblock Global-in-time weak measure solutions and finite-time aggregation for
  nonlocal interaction equations.
\newblock {\em Duke Mathematical Journal}, 156, 02 2011.

\bibitem{zoology}
J.~A. Carrillo, Y.~Huang, and M.~Schmidtchen.
\newblock Zoology of a nonlocal cross-diffusion model for two species.
\newblock {\em SIAM J. Appl. Math.}, 78(2):1078--1104, 2018.

\bibitem{CMV}
J.~A. Carrillo, R.~J. McCann, and C.~Villani.
\newblock Kinetic equilibration rates for granular media and related equations:
  entropy dissipation and mass transportation estimates.
\newblock {\em Rev. Mat. Iberoamericana}, 19(3):971--1018, 2003.

\bibitem{CM}
J.~A. Carrillo, H.~Murakawa, M.~Sato, H.~Togashi, and O.~Trush.
\newblock A population dynamics model of cell-cell adhesion incorporating
  population pressure and density saturation.
\newblock {\em J. Theoret. Biol.}, 474:14--24, 2019.

\bibitem{Carrillo_Chertock}
José~A. Carrillo, Alina Chertock, and Yanghong Huang.
\newblock A finite-volume method for nonlinear nonlocal equations with a
  gradient flow structure.
\newblock {\em Communications in Computational Physics}, 17(1):233 -- 258, jan
  2015.

\bibitem{colombo}
R.~M. Colombo, M.~Garavello, and M.~L\'ecureux-Mercier.
\newblock Non-local crowd dynamics.
\newblock {\em C. R. Math. Acad. Sci. Paris}, 349(13-14):769--772, 2011.

\bibitem{colombo_garavello_marcellini_rossi}
R.~M. Colombo, M.~Garavello, F.~Marcellini, and E.~Rossi.
\newblock General renewal equations motivated by biology and epidemiology.
\newblock {\em J. Differential Equations}, 354:133--169, 2023.

\bibitem{davydovych_et_al}
Vasyl’ Davydovych, Vasyl’ Dutka, and Roman Cherniha.
\newblock Reaction–diffusion equations in mathematical models arising in
  epidemiology.
\newblock {\em Symmetry}, 15(11), 2023.

\bibitem{DiFEF}
M.~Di~Francesco, A.~Esposito, and S.~Fagioli.
\newblock Nonlinear degenerate cross-diffusion systems with nonlocal
  interaction.
\newblock {\em Nonlinear Anal.}, 169:94--117, 2018.

\bibitem{difrafag}
M.~Di~Francesco and S.~Fagioli.
\newblock Measure solutions for non-local interaction {PDE}s with two species.
\newblock {\em Nonlinearity}, 26:2777, 2013.

\bibitem{difrafag2}
M.~Di~Francesco and S.~Fagioli.
\newblock A nonlocal swarm model for predators-prey interactions.
\newblock {\em Math. Models Methods Appl. Sci.}, 26(2):319--355, 2016.

\bibitem{diekmann}
O.~Diekmann and J.~A.~P. Heesterbeek.
\newblock {\em Mathematical epidemiology of infectious diseases}.
\newblock Wiley Series in Mathematical and Computational Biology. John Wiley \&
  Sons, Ltd., Chichester, 2000.
\newblock Model building, analysis and interpretation.

\bibitem{dimarco_perthame_toscani_zanella}
G.~Dimarco, B.~Perthame, G.~Toscani, and M.~Zanella.
\newblock Kinetic models for epidemic dynamics with social heterogeneity.
\newblock {\em J. Math. Biol.}, 83(1):Paper No. 4, 32, 2021.

\bibitem{dorsogna_et_al}
M.~R. D'Orsogna, Y.~L. Chuang, A.~L. Bertozzi, and L.~S. Chayes.
\newblock Self-propelled particles with soft-core interactions: Patterns,
  stability, and collapse.
\newblock {\em Phys. Rev. Lett.}, 96:104302, Mar 2006.

\bibitem{during}
B.~D\"uring, P.~Markowich, J.-F. Pietschmann, and M.-T. Wolfram.
\newblock Boltzmann and {F}okker-{P}lanck equations modelling opinion formation
  in the presence of strong leaders.
\newblock {\em Proc. R. Soc. Lond. Ser. A Math. Phys. Eng. Sci.},
  465(2112):3687--3708, 2009.

\bibitem{raoul}
K.~Fellner and G.~Raoul.
\newblock Stable stationary states of non-local interaction equations.
\newblock {\em Math. Models Methods Appl. Sci.}, 20(12):2267--2291, 2010.

\bibitem{pugliese}
A.~Franceschetti and A.~Pugliese.
\newblock Threshold behaviour of a {SIR} epidemic model with age structure and
  immigration.
\newblock {\em J. Math. Biol.}, 57(1):1--27, 2008.

\bibitem{gai_iron_kolokolnikov}
C.~Gai, D.~Iron, and T.~Kolokolnikov.
\newblock Localized outbreaks in an {S}-{I}-{R} model with diffusion.
\newblock {\em J. Math. Biol.}, (5):1389--1411, 2020.

\bibitem{grave_et_al}
M.~Grave, A.~Viguerie, G.~F. Barros, A.~Reali, R.~F.~S. Andrade, and A.~L.
  G.~A. Coutinho.
\newblock Modeling nonlocal behavior in epidemics via a reaction-diffusion
  system incorporating population movement along a network.
\newblock {\em Comput. Methods Appl. Mech. Engrg.}, 401:Paper No. 115541, 18,
  2022.

\bibitem{iannelli_et_al}
M.~Iannelli, M.-Y. Kim, and E.-J. Park.
\newblock Asymptotic behavior for an {SIS} epidemic model and its
  approximation.
\newblock {\em Nonlinear Anal.}, 35(7):797--814, 1999.

\bibitem{JL}
W.~J\"ager and S.~Luckhaus.
\newblock On explosions of solutions to a system of partial differential
  equations modelling chemotaxis.
\newblock {\em Trans. Amer. Math. Soc.}, 329(2):819--824, 1992.

\bibitem{JKO}
R.~Jordan, D.~Kinderlehrer, and F.~Otto.
\newblock The variational formulation of the {F}okker-{P}lanck equation.
\newblock {\em SIAM J. Math. Anal.}, 29(1):1--17, 1998.

\bibitem{murray}
A.~K\"all\'en, P.~Arcuri, and J.~D. Murray.
\newblock A simple model for the spatial spread and control of rabies.
\newblock {\em J. Theoret. Biol.}, 116(3):377--393, 1985.

\bibitem{keller}
E.~F. Keller and L.~A. Segel.
\newblock Initiation of slime mold aggregation viewed as an instability.
\newblock {\em J. Theoret. Biol.}, 26(3):399--415, 1970.

\bibitem{kermack}
W.~O. Kermack and A.~G. McKendrick.
\newblock Contributions to the mathematical theory of epidemics.
\newblock {\em Proc. R. Soc.}, 115:700--721, 1927.

\bibitem{kim_lin_zhang}
K.~I. Kim, Z.~Lin, and Q.~Zhang.
\newblock An {SIR} epidemic model with free boundary.
\newblock {\em Nonlinear Anal. Real World Appl.}, 14(5):1992--2001, 2013.

\bibitem{kim}
M.-Y. Kim.
\newblock Global dynamics of approximate solutions to an age-structured
  epidemic model with diffusion.
\newblock {\em Adv. Comput. Math.}, 25(4):451--474, 2006.

\bibitem{kuniya_et_al}
T.~Kuniya, H.~Inaba, and J.~Yang.
\newblock Global behavior of {SIS} epidemic models with age structure and
  spatial heterogeneity.
\newblock {\em Jpn. J. Ind. Appl. Math.}, 35(2):669--706, 2018.

\bibitem{kuniya_wang}
T.~Kuniya and J.~Wang.
\newblock Global dynamics of an {SIR} epidemic model with nonlocal diffusion.
\newblock {\em Nonlinear Anal. Real World Appl.}, 43:262--282, 2018.

\bibitem{langlais_busenberg}
M.~Langlais and S.~Busenberg.
\newblock Global behaviour in age structured {S}.{I}.{S}. models with seasonal
  periodicities and vertical transmission.
\newblock {\em J. Math. Anal. Appl.}, 213(2):511--533, 1997.

\bibitem{li}
C.~Li.
\newblock Dynamics of nonlocal and local {SIR} diffusive epidemic model with
  free boundaries.
\newblock {\em Nonlinear Anal. Real World Appl.}, 74:Paper No. 103952, 18,
  2023.

\bibitem{Li_2008}
Li~Li, Jin Zhen, and Sun Gui-Quan.
\newblock Spatial pattern of an epidemic model with cross-diffusion.
\newblock {\em Chinese Physics Letters}, 25(9):3500, sep 2008.

\bibitem{lloyd_may}
A.~L. Lloyd and R.~M. May.
\newblock Spatial heterogeneity in epidemic models.
\newblock {\em Journal of Theoretical Biology}, 179(1):1--11, 1996.

\bibitem{martcheva}
M.~Martcheva.
\newblock {\em An introduction to mathematical epidemiology}, volume~61 of {\em
  Texts in Applied Mathematics}.
\newblock Springer, New York, 2015.

\bibitem{mingione_et_al}
M.~Mingione, P.~Alaimo Di~Loro, A.~Farcomeni, F.~Divino, G.~Lovison,
  A.~Maruotti, and G.~Jona~Lasinio.
\newblock Spatio-temporal modelling of {COVID}-19 incident cases using
  {R}ichards' curve: an application to the {I}talian regions.
\newblock {\em Spat. Stat.}, 49:Paper No. 100544, 23, 2022.

\bibitem{mogilner}
A.~Mogilner, L.~Edelstein-Keshet, L.~Bent, and A.~Spiros.
\newblock Mutual interactions, potentials, and individual distance in a social
  aggregation.
\newblock {\em J. Math. Biol.}, 38:534--570, 1999.

\bibitem{mohan_kumari}
N.~Mohan and N.~Kumari.
\newblock Positive steady states of a {SI} epidemic model with cross diffusion.
\newblock {\em Appl. Math. Comput.}, 410:Paper No. 126423, 15, 2021.

\bibitem{motsch_tadmor}
S.~Motsch and E.~Tadmor.
\newblock A new model for self-organized dynamics and its flocking behavior.
\newblock {\em J. Stat. Phys.}, 144(5):923--947, 2011.

\bibitem{rass}
L.~Rass and J.~Radcliffe.
\newblock {\em Spatial deterministic epidemics}, volume 102 of {\em
  Mathematical Surveys and Monographs}.
\newblock American Mathematical Society, Providence, RI, 2003.

\bibitem{sattenspiel_review}
L.~Sattenspiel.
\newblock Modelling the spread of infectious disease in human populations.
\newblock {\em American Journal of Physical Anthropology}, S11:245--276, 1990.

\bibitem{sattenspiel_et_al}
L.~Sattenspiel, J.~Dimka, and C.~Orbann.
\newblock Using cultural, historical, and epidemiological data to inform,
  calibrate, and verify model structures in agent-based simulations.
\newblock {\em Math. Biosci. Eng.}, 16(4):3071--3093, 2019.

\bibitem{takacs_horvath_farago}
B.~Tak\'acs, R.~Horv\'ath, and I.~Farag\'o.
\newblock Space dependent models for studying the spread of some diseases.
\newblock {\em Comput. Math. Appl.}, 80(2):395--404, 2020.

\bibitem{nature}
M.~te~Vrugt, J.~Bickmann, and R.~Wittkowski.
\newblock Effects of social distancing and isolation on epidemic spreading
  modeled via dynamical density functional theory.
\newblock {\em Nature Communications}, 11(5576), 2020.

\bibitem{thieme}
H.~R. Thieme.
\newblock A model for the spatial spread of an epidemic.
\newblock {\em J. Math. Biol.}, 4(4):337--351, 1977.

\bibitem{topaz}
C.~M. Topaz, A.~L. Bertozzi, and M.~A. Lewis.
\newblock A nonlocal continuum model for biological aggregation.
\newblock {\em Bull. Math. Biol.}, 68(7):1601--1623, 2006.

\bibitem{turing}
A.~M. Turing.
\newblock The chemical basis of morphogenesis.
\newblock {\em Philos. Trans. Roy. Soc. London Ser. B}, 237(641):37--72, 1952.

\bibitem{viguerie_et_al}
A.~Viguerie, G.~Lorenzo, F.~Auricchio, D.~Baroli, T.~J.~R. Hughes, A.~Patton,
  A.~Reali, T.~E. Yankeelov, and A.~Veneziani.
\newblock Simulating the spread of {COVID}-19 {\it via} a spatially-resolved
  susceptible-exposed-infected-recovered-deceased ({SEIRD}) model with
  heterogeneous diffusion.
\newblock {\em Appl. Math. Lett.}, 111:Paper No. 106617, 9, 2021.

\bibitem{yang_wang}
G.~Yang and M.~Wang.
\newblock An {SIR} epidemic model with nonlocal diffusion and free boundaries.
\newblock {\em Discrete Contin. Dyn. Syst. Ser. B}, 28(7):4221--4230, 2023.

\bibitem{yang_et_al}
Y.~Yang, L.~Zou, J.~Zhou, and S.~Ruan.
\newblock Dynamics of a nonlocal viral infection model with spatial
  heterogeneity and general incidence.
\newblock {\em J. Evol. Equ.}, 23(2):Paper No. 29, 33, 2023.

\bibitem{zhang_et_al}
L.~Zhang, Z.-C. Wang, and X.-Q. Zhao.
\newblock Time periodic traveling wave solutions for a {K}ermack-{M}c{K}endrick
  epidemic model with diffusion and seasonality.
\newblock {\em J. Evol. Equ.}, 20(3):1029--1059, 2020.

\bibitem{zhi_et_al}
S.~Zhi, H.-T. Niu, You-H. Su, and X.~Han.
\newblock Influence of human behavior on {COVID}-19 dynamics based on a
  reaction-diffusion model.
\newblock {\em Qual. Theory Dyn. Syst.}, 22(3):Paper No. 113, 26, 2023.

\end{thebibliography}

\end{document}